\newcommand{\1}{\mathds{1}}
\newcommand{\D}{\mathbb{D}}
\newcommand{\C}{\mathbb{C}}
\newcommand{\R}{\mathbb{R}}
\newcommand{\Z}{\mathbb{Z}}
\newcommand{\PP}{\mathbb{P}}
\newcommand{\TT}{\mathbb{T}}
\newcommand{\E}{\mathbb{E}}
\newtheorem{thm}{Theorem}[section]
\newtheorem{prop}[thm]{Proposition}
\newtheorem{coro}[thm]{Corollary}
\newtheorem{lem}[thm]{Lemma}
\newtheorem{rem}[thm]{Remark}
\newtheorem{ex}[thm]{Example}
\numberwithin{equation}{section}
\numberwithin{table}{section}
\numberwithin{figure}{section}
\newcommand{\dd}{\;\mathrm{d}}
\newcommand{\e}{\mathrm{e}}
\newcommand{\m}{\mathrm{m}}
\newcommand{\Li}{\mathrm{Li}} 
\newcommand{\re}{\mathop{\mathrm{Re}}} 
\newcommand{\im}{\mathop{\mathrm{Im}}} 
\newcommand{\hypgeo}[2]{%
  {\vphantom{F}}_{#1}\kern-\scriptspace F_{#2}%
}
\newcommand{\mcom}[1]{{\color{blue}{Matilde: #1}} }
\newcommand{\kommentar}[1]{}
\newcommand*\pFqskip{8mu}
\newcommand*\pFq{\begingroup
        \catcode`\,\active
        \def ,{\mskip\pFqskip\relax}%
        \dopFq
}
\def\dopFq#1#2#3#4#5{%
        {}_{#1}F_{#2}\left(\left.\genfrac..{0pt}{}{#3}{#4}\right|#5\right)%
        \endgroup
}}
\newcommand*\pFq[6][8]{%
  \begingroup 
  \pFqmuskip=#1mu\relax
  \mathchardef\normalcomma=\mathcode`,
  \mathcode`\,=\string"8000
  \begingroup\lccode`\~=`\,
  \lowercase{\endgroup\let~}\pFqcomma
  {}_{#2}F_{#3}{\left(\!\left.\genfrac..{0pt}{}{#4}{#5}\right|#6\!\right)}%
  \endgroup
}
\newcommand{\pFqcomma}{{\normalcomma}\mskip\pFqmuskip}
\begin{document}

\title{Random walks through the areal Mahler measure: steps in the complex plane. }

\author{Matilde N. Lal\'in}
\author{Siva Sankar Nair}
\author{Berend Ringeling}
\author{Subham Roy}

\address{Matilde Lal\'in:  D\'epartement de math\'ematiques et de statistique, Universit\'e de Montr\'eal. CP 6128, succ. Centre-ville. Montreal, QC H3C 3J7, Canada}\email{matilde.lalin@umontreal.ca}
\address{Siva Sankar Nair: D\'epartement de math\'ematiques et de statistique, Universit\'e de Montr\'eal. CP 6128, succ. Centre-ville. Montreal, QC H3C 3J7, Canada}\email{siva.sankar.nair@umontreal.ca}
\address{Berend Ringeling:  D\'epartement de math\'ematiques et de statistique, Universit\'e de Montr\'eal. CP 6128, succ. Centre-ville. Montreal, QC H3C 3J7, Canada}\email{bjringeling@gmail.com}
\address{Subham Roy: Charles University, Faculty of Mathematics and Physics, Department of Algebra, Sokolovsk\'a 83, 186 00 Praha 8, Czech Republic} \email{subham.roy@matfyz.cuni.cz}


\subjclass[2010]{Primary 11R06; Secondary 11M06, 11R42}
\keywords{Mahler measure; zeta values; Dirichlet $L$-values}

\begin{abstract}
    We study the areal Mahler measure of the two-variable, $k$-parameter family $x+y+k$ 
    and prove explicit formulas that demonstrate its relation to the standard Mahler measure of these polynomials. The proofs involve interpreting the areal Mahler measure as a random walk in the complex plane and utilizing the areal analogue of the Zeta Mahler function to arrive at the result. Using similar techniques, we also present formulas for a three-variable family $(x+1)(y+1)+kz$ in terms of the standard Mahler measure, along with terms that involve certain hypergeometric functions. For both families we show that its areal Mahler measure is, up to elementary functions, a linear combination of the normal Mahler measure and the volume of the Deninger cycle of the corresponding family.

\end{abstract}

\maketitle

\section{Introduction}
The (logarithmic) Mahler measure of a non-zero rational function $P \in \C(x_1,\dots,x_n)^\times$ is a height function defined as 
\begin{equation*}
 \m(P):=\frac{1}{(2\pi i)^n}\int_{\mathbb{T}^n}\log|P(x_1,\dots, x_n)|\frac{\dd x_1}{x_1}\cdots \frac{\dd  x_n}{x_n},
\end{equation*}
where the integration is taken with respect to the Haar measure on the $n$-dimensional unit torus  $\mathbb{T}^n=\{(x_1,\dots,x_n)\in \mathbb{C}^n\, :\, |x_1|=\cdots=|x_n|=1\}$. 

 When  $P$ is a univariate polynomial, Jensen's formula implies that $\m(P)$ can be expressed in terms of the roots of $P$ that lie outside the unit circle in the complex plane. In the multivariable case, the Mahler measure remains a  mysterious and intriguing quantity with no general formula to compute it. However, there are various instances where $\m(P)$ is known to yield special values of functions that have arithmetic significance, such as the Riemann zeta function, $L$-functions, and so on. One of the first formulas of this type was given by Smyth \cite{S1,B1}:
\begin{equation}\label{eq:Smyth}
\m(x+y+1) = \frac{3 \sqrt{3}}{4 \pi} L(\chi_{-3},2),
\end{equation}
where  $L(\chi_{-3},s)$ is the Dirichlet $L$-function associated to the primitive character $\chi_{-3}$ of conductor 3. The connection with special values of $L$-functions has been explained in terms of evaluations of regulators and Beilinson's conjectures by Deninger \cite{Deninger}, Boyd \cite{Bo98}, and Rodriguez-Villegas \cite{RV}. It is also observed that Mahler measure computations of certain families of polynomials yield values of hypergeometric functions. This can often be explained by the fact that these families of Mahler measures satisfy certain hypergeometric type differential equations  (see \cite[Chapter 5]{BrunaultZudilin} for more details). The initial connections were discovered by Rodriguez-Villegas \cite{RV}. For example, for $k>0$,
\begin{equation}\label{eq:FRV}\m\left(x+\frac{1}{x}+y+\frac{1}{y}+k\right)=\log (k)- \frac{2}{k^2}\re\pFq{4}{3}{1,1,\frac{3}{2},\frac{3}{2}}{2,2,2}{\frac{16}{k^2}}.\end{equation}
(See also \cite[Proposition 5.2]{BrunaultZudilin}.) The connection to hypergeometric functions can be exploited in favorable cases to relate the Mahler measures to special values of $L$-functions as conjectured by Boyd \cite{Bo98}. Rogers and Zudilin \cite{RZ12,RZ14} proved several of these initial formulas, such as 
\[\m\left(x+\frac{1}{x}+y+\frac{1}{y}+1\right)=L'(E,0),\]
where $E$ is an elliptic curve of conductor $15$ corresponding to the projective closure of $x+\frac{1}{x}+y+\frac{1}{y}+1=0$.

The appearance of such special values in Mahler measure evaluations sparked great interest in their computations and has led to the study of several variations of the Mahler measure. The generalized Mahler measure \cite{GonOyanagi}, multiple and higher Mahler measures \cite{KLO}, and the Zeta Mahler function \cite{Akatsuka} are some examples that yield similar computations. In this article, we consider the (logarithmic) areal Mahler measure of $P\in\C(x_1,\dots,x_n)^\times$ as defined by Pritsker \cite{Pritsker}:
\begin{equation*}
\m_\D(P)=\frac1{\pi^n}\int_{\D^n}\log|P(x_1,\dots,x_n)| \dd A(x_1)\cdots \dd A(x_n),
\end{equation*}
where
\[
\D^n=\{(x_1,\dots,x_n)\in\C^n:|x_1|,\dots,|x_n|\le1\}
\]
is the product of $n$ unit disks, and the measure is the natural measure in the $A^0$ Bergman space. Recently, Lalín and Roy \cite{Lalin-Roy} obtained interesting expressions for certain two-variable polynomials such as
\begin{equation}\label{eq:k=1}
\m_\D(x+y+1)=\frac{3\sqrt3}{4\pi}L(\chi_{-3},2)+\frac16-\frac{11\sqrt3}{16\pi},
\end{equation}
and
\begin{equation}\label{eq:k=sqrt2}
\m_\D(x+y+\sqrt2)=\frac{L(\chi_{-4},2)}{\pi}+\mathcal{C}_{\sqrt2}+\frac38-\frac3{2\pi},
\end{equation}
where $\chi_{-4}$ denotes the primitive Dirichlet character of conductor 4 and $\mathcal{C}_{\sqrt2}$ is a constant that was expressed in terms of values of a generalized hypergeometric function and the Gamma function in \cite{Lalin-Roy}. As we will later see in Theorem \ref{thm:x+y+k}, $\mathcal{C}_{\sqrt2}=\frac{\log 2}{4}$.

These results are particularly fascinating since we may compare formula \eqref{eq:k=1} with the standard Mahler measure formula \eqref{eq:Smyth} and observe the same term involving the $L$-function, along with some extra terms. Similarly, it is known that 
\begin{equation}\label{eq:k=sqrt2classical}
\m(x+y+\sqrt2)=\frac{L(\chi_{-4},2)}{\pi}+\frac{\log2}4,
\end{equation}
and we observe the same phenomenon comparing with formula \eqref{eq:k=sqrt2}. In this article, we make this observation more precise and prove the following general statement.
\begin{thm}\label{thm:x+y+k} 
If $0\leq k \leq 2$, we have 
   \begin{equation}\label{eq:k<2}
   \m_{\D}(x+y+k)=\m(x+y+k)-\frac{k\sqrt{4-k^2}(10+k^2) +(8-16k^2)\arccos\left(\frac{k}{2}\right)}{16\pi }.\end{equation}
If $k\geq 2$, then 
\begin{equation}\label{eq:k>2}
\m_{\D}(x+y+k)=\m(x+y+k)=\log (k).
\end{equation}
\end{thm}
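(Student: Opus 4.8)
The plan is to collapse the two-dimensional areal integral to a one-dimensional problem by integrating out one variable with Jensen's formula, and then to recognize the standard Mahler measure as the ``boundary'' version of the very same computation, so that the difference is an elementary lens integral. This route is self-contained and, for this particular family, sidesteps the full random-walk/Zeta Mahler apparatus. First I would integrate over $y$: for fixed $x$, writing $a=x+k$ and using the circular mean $\frac1{2\pi}\int_0^{2\pi}\log|a+\rho e^{i\phi}|\,\dd\phi=\log\max(|a|,\rho)$, integration over $\rho\in[0,1]$ against the area weight $\rho\,\dd\rho$ yields the closed form
\[
\frac1\pi\int_\D\log|x+k+y|\,\dd A(y)=\begin{cases}\log|x+k|, & |x+k|\ge1,\\[1mm] \tfrac12(|x+k|^2-1), & |x+k|<1.\end{cases}
\]
This already settles the regime $k\ge2$: there the disk $B=\{|x+k|<1\}$ is disjoint from $\D$, so the integrand is $\log|x+k|$ throughout, and a second application of the same Jensen computation (now in $x$, with the constant $k\ge1$) gives $\m_\D(x+y+k)=\log k$; since $|x+y|\le2\le k$ forces $\m(x+y+k)=\log k$ as well, \eqref{eq:k>2} follows.

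For $0\le k\le2$ I would write the outer integral as a clean piece plus a correction supported on the lens $\D\cap B$:
\[
\m_\D(x+y+k)=\frac1\pi\int_\D\log|x+k|\,\dd A(x)+\frac1\pi\int_{\D\cap B}\left(\frac{|x+k|^2-1}{2}-\log|x+k|\right)\dd A(x),
\]
using that on $\D\setminus B$ the integrand already equals $\log|x+k|$. The first term is again evaluated by Jensen ($\log k$ if $k\ge1$, and $\tfrac12(k^2-1)$ if $k<1$). For the correction I would substitute $u=x+k$, turning $\D\cap B$ into the symmetric lens $L=\{|u|\le1\}\cap\{|u-k|\le1\}$ with vertices $u=\tfrac k2\pm i\sqrt{1-k^2/4}$, and compute $\frac1\pi\int_L\big(\tfrac12(|u|^2-1)-\log|u|\big)\,\dd A(u)$ in polar coordinates about the origin; this is precisely where the algebraic term $k\sqrt{4-k^2}(10+k^2)$ and the $\arccos(k/2)$ term are produced.

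Finally I would put the standard Mahler measure in the same language. Integrating first over the variable that is free on the circle gives $\m(x+y+k)=\frac1{2\pi}\int_0^{2\pi}\log^+|k+e^{i\phi}|\,\dd\phi$, an integral over the boundary circle of the same disk; for $0\le k\le2$ its integrand is supported where $\cos\phi>-k/2$ and evaluates to the explicit combination of $\sqrt{4-k^2}$ and $\arccos$ implicit in \eqref{eq:k<2}. Subtracting, the $L$-value/logarithm content cancels and one is left with the stated elementary expression.

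The main obstacle I anticipate is twofold, and both parts live in the bookkeeping rather than in any single hard idea. First, the lens integral must be set up carefully: $L$ is cut out by two circular arcs, so the polar limits depend on the angle and change character, and the logarithmic and quadratic pieces integrate to different combinations of $\arccos(k/2)$, $\sqrt{4-k^2}$, and rational functions of $k$ that only simplify after deliberate collection. Second, the clean disk term and the comparison with $\m(x+y+k)$ each split into the sub-cases $k<1$ and $k>1$, whereas \eqref{eq:k<2} is a single formula valid on all of $[0,2]$; I would need to check that the sub-case contributions reconcile into one uniform expression. As consistency checks I would verify $k=0$, where the formula predicts $\m_\D(x+y)=-\tfrac14$ matching the direct evaluation $\frac1\pi\int_\D\tfrac12(|x|^2-1)\,\dd A=-\tfrac14$, and $k=1$, which must reproduce \eqref{eq:k=1}.
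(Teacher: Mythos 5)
Your reduction is set up correctly: the inner integral over $y$ is exactly Pritsker's areal Jensen formula for the linear polynomial $y+(x+k)$, the case $k\ge 2$ follows as you say, and the decomposition of $\m_\D(x+y+k)$ into $\frac1\pi\int_\D\log|x+k|\,\dd A(x)$ plus a correction supported on the lens $L$ is valid (your $k=0$ check is also right). The fatal problem lies in the two evaluation claims that carry all the weight. You assert that the lens integral $\frac1\pi\int_L\bigl(\tfrac12(|u|^2-1)-\log|u|\bigr)\dd A(u)$ produces only the elementary terms $k\sqrt{4-k^2}(10+k^2)$ and $\arccos\left(\frac k2\right)$, and that $\frac1{2\pi}\int_0^{2\pi}\log^+|k+e^{i\phi}|\,\dd\phi$ evaluates to an elementary combination of $\sqrt{4-k^2}$ and $\arccos$. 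Both claims are false, and the final ``subtraction'' cannot repair them: the circle integral \emph{is} $\m(x+y+k)$, which by Cassaigne--Maillot equals $\frac1\pi\bigl(2\log(k)\arcsin\left(\frac k2\right)+D(e^{2i\arcsin(k/2)})\bigr)$ and at $k=1$ is Smyth's value \eqref{eq:Smyth} --- not elementary. Consequently, since your first term $\frac1\pi\int_\D\log|x+k|\,\dd A(x)$ is elementary ($\log k$ for $k\ge1$, $\tfrac12(k^2-1)$ for $k<1$), the lens integral must itself contain the dilogarithm: at $k=1$ it equals $\m_\D(x+y+1)$, i.e.\ the right-hand side of \eqref{eq:k=1}, which contains $L(\chi_{-3},2)$. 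Concretely, in your polar coordinates the angular measure of $L$ at radius $r$ is $2\arccos\bigl(\frac{r^2+k^2-1}{2rk}\bigr)$ (or $2\pi$ on an inner range when $k<1$), and the resulting term $\int r\log r\,\arccos\bigl(\frac{r^2+k^2-1}{2rk}\bigr)\dd r$ is precisely a dilogarithm-producing integral, not an elementary one.

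So the missing idea is the identification of the non-elementary part of the lens integral with $\m(x+y+k)$ up to elementary functions; that identification is the actual content of the theorem, and your outline asserts it away. The paper's own direct proof (Theorem \ref{thm:direct}) takes an equivalent one-dimensional route --- it reduces to $\log (k)+\int_k^2 p_{T_1}(t)\log\left(\frac tk\right)\dd t$ with the explicit radial density $p_{T_1}(t)=\frac t\pi\bigl(2\pi-t\sqrt{4-t^2}-4\arcsin\left(\frac t2\right)\bigr)$ --- and the crucial step there is the evaluation $2\int_k^2\frac{\arcsin(t/2)}{t}\dd t=\pi\log 2-2\arcsin\left(\frac k2\right)\log (k)-D\bigl(e^{2i\arcsin(k/2)}\bigr)$, obtained via $\int\log(2\sin\alpha)\dd\alpha$, which manufactures exactly the Bloch--Wigner term that matches Cassaigne--Maillot. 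Your route can certainly be completed, but you must carry out the analogous Clausen-type integral inside the lens computation and exhibit the cancellation against $D\bigl(e^{2i\arcsin(k/2)}\bigr)$ explicitly; as written, the proposal claims both sides are elementary, which already contradicts Smyth's formula at $k=1$.
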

\begin{rem}\label{rem:CM}
The value of $\m(x+y+k)$ for $0\le k\le 2$ is known due to a result of Cassaigne and Maillot (see Theorem \ref{thm:CM}), which gives us
\[
 \m(x + y +k) = \frac{1}{\pi} \left( 2 \log (k) \arcsin\left(\frac{k}{2}\right) + D \left( \e^{2 i \arcsin\left(\frac{k}{2}\right)}\right)\right).
\]
Here $\arcsin\left(\frac{k}{2}\right), \arccos\left(\frac{k}{2}\right) \in [0, \pi),$ and $D(z)$ denotes the Bloch--Wigner dilogarithm (see equation \eqref{eq:BWdilog}). The dilogarithm terms above often yield values of certain $L$-functions, such as the Riemann zeta function when $k=1$, and the Dirichlet $L$-function associated to the character $\chi_{-4}$ when $k=\sqrt2$.
\end{rem}
We exhibit two proofs of Theorem \ref{thm:x+y+k}. The first proof consists of evaluating the areal Zeta Mahler function (see definition \eqref{eq:ZAMMdefn1}) by interpreting it as the expected value of a random walk $|X + Y + k|$, where $X$ and $Y$ are random variables whose radii and angles are uniformly distributed on $[0, 1]$ and on $[0, 2 \pi)$, respectively. Then the areal Mahler measure is recovered by differentiating this function at $s=0$. The second proof follows similar ideas, but requires a known areal Zeta Mahler function. 
While the first proof is more involved, it gives access to formulas for the areal version of the higher Mahler measure (see Theorem \ref{thm:ZAMMk+x+y}).

We also prove a similar result involving the difference of the areal Mahler measure and the standard Mahler measure, but of a three-variable polynomial family given by
\[
Q_k(x,y,z)=(x+1)(y+1)+kz,
\] 
for $k$ a non-negative real number.  The formula for $k=1$,
\begin{equation}\label{eq:Boyd}
\m((1 + x)(1 + y)+z) = -2L'(E, -1),\end{equation}
where $E : (1 + x)(1 + y)(1 + \frac{1}{x} )(1 + \frac{1}{y} ) = 1$ 
is an elliptic curve of conductor 15,
was conjectured by Boyd and Rodriguez-Villegas in \cite{Boyd-conj}, shown to be related to the corresponding $L$-function in \cite{Lalin-Crelle}, and finally completely proven by Brunault in \cite{Brunault-1x1yz} using the  computations of Goncharov regulator integrals associated to $K_4$ classes on modular curves developed by Brunault and Zudilin in \cite{BrunaultZudilin-modularregulators}. 

For general $k$, the Mahler measure of $Q_k$ is not expected to be expressible directly as a sum of hypergeometric functions. As the next result shows, we expect the same for the areal Mahler measure.


\begin{thm} \label{thm:Qk}
Let $F(t)$ and $G(t)$ denote the following functions written in terms of the Gauss hypergeometric function
\[
F(t) = \frac{1}{2 \pi} \cdot \pFq{2}{1}{ \frac{1}{2}, \frac{1}{2}}{1}{1 - \frac{t^2}{16}},
\]
and
\[
G(t)=\frac{1}{2 \pi }\cdot\pFq{2}{1}{\frac{1}{2}, -\frac{1}{2}}{1}{1-\frac{t^2}{16}}.
\]
    Then, for $0<k<4$, we have
    \begin{align}
    \m_{\D}(Q_k) - \frac{k^2 + 8}{8} \m (Q_k)=&\;\frac{9}{8k^2} - \frac{1}{2} - \frac{k^2}{8}\log(k) + c_0(k)\left(\frac{1}{2} - \frac{9}{8 k^2} + \frac{5 k^2}{32} \right)\label{eq:34}\\
    &\;+ F(k) \left(-\frac{9}{8 k} - \frac{29 k}{64} + \frac{17 k^3}{128} \right) + G(k) \left(-\frac{9}{8 k} - \frac{49 k}{32}\right),\nonumber
    \end{align}
    where $c_0(k)$ is defined as
\[
c_0(k) = \int_{k}^4 F(t) \dd t.
\]
   If $k\ge4$, we have
\begin{equation*}
    \m_{\D}(Q_k) - \frac{k^2 + 8}{8} \m (Q_k)=\frac{9}{8k^2} - \frac{1}{2} - \frac{k^2}{8} \log(k).
    \end{equation*}
\end{thm}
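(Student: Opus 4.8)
The plan is to integrate out the variable $z$ first, reducing both quantities to two-dimensional integrals governed by the single random variable $R=|(1+x)(1+y)|$. For the areal measure, the area-Jensen identity $\frac1\pi\int_\D\log|z-c|\dd A(z)=\frac{|c|^2-1}2$ for $|c|\le1$ and $=\log|c|$ for $|c|\ge1$, applied to $kz+(1+x)(1+y)=k\big(z+(1+x)(1+y)/k\big)$, gives
\[
\m_\D(Q_k)=\E_{x,y\in\D}\left[g_\D(R)\right],\qquad g_\D(R)=\begin{cases}\log k+\frac{R^2}{2k^2}-\frac12 & R\le k,\\[2pt]\log R & R\ge k,\end{cases}
\]
while the classical Jensen formula on the circle gives $\m(Q_k)=\E_{x,y\in\TT}\big[\max(\log k,\log R)\big]$. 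Since $R\le 4$ always (each factor satisfies $|1+x|\le2$), the threshold $R=k$ is vacuous exactly when $k\ge4$, which is the source of the two regimes. Because $\m_\D(1+x)=\m(1+x)=0$, one has $\E[\log R]=0$ in both settings, so the formulas collapse to integrals supported on $\{R\le k\}$:
\[
\m_\D(Q_k)=\E_\D\!\left[\1_{R\le k}\Big(\log k-\log R+\tfrac{R^2}{2k^2}-\tfrac12\Big)\right],\qquad \m(Q_k)=\E_\TT\!\left[\1_{R\le k}(\log k-\log R)\right].
\]

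The regime $k\ge4$ then follows immediately: the indicators are identically $1$, so $\m(Q_k)=\log k$, and $\m_\D(Q_k)=\log k+\frac{\E_\D[R^2]}{2k^2}-\frac12$ with $\E_\D[R^2]=\big(\E_\D|1+x|^2\big)^2=(3/2)^2=9/4$. This reproduces the stated formula and simultaneously pins down the coefficient $\frac{k^2+8}8$ as the unique choice making the $\log k$ term equal to $-\frac{k^2}8\log k$.

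For $0<k<4$ I would treat the two sides through the marginal densities of $|1+x|$. On the torus, $|1+x|=2|\cos(\theta/2)|$ has density $\frac2{\pi\sqrt{4-s^2}}$, and a Mellin convolution shows that $R=|(1+x)(1+y)|$ has density exactly $F(t)$ on $[0,4]$ (the product of two such marginals is a complete elliptic integral of the first kind, i.e. the displayed Gauss hypergeometric $F$). Hence $\m(Q_k)=\int_0^k(\log k-\log t)F(t)\dd t$, which is expressible through $c_0(k)=\int_k^4 F$ and $\int_k^4\log t\,F(t)\dd t$. On the disk, $|1+x|$ has density $\rho_S(s)=\frac{2s}\pi\arccos(s/2)$; conditioning on $S_1=|1+x|$ and integrating $S_2=|1+y|$ up to $\min(2,k/s)$ splits each disk moment into a full-range part (for $s\le k/2$, evaluated elementarily using $\E_\D[\log S]=0$ and $\E_\D[S^2]=3/2$) and a partial-range part (for $s>k/2$) whose integrand carries the product $\arccos(s/2)\,\arccos\!\big(\tfrac{k}{2s}\big)$ coming from the hyperbolic boundary $s_1s_2=k$.

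The main obstacle is precisely the evaluation of these partial-range disk moments $\PP_\D(R\le k)$, $\E_\D[\1_{R\le k}\log R]$ and $\E_\D[\1_{R\le k}R^2]$. Although the full-range integrals are elementary, the threshold cross-terms are genuinely transcendental: differentiating under the integral sign turns them into complete elliptic integrals, e.g. $\frac{\dd}{\dd t}\big[t\,J'(t)\big]=\int_{t/2}^2\frac{\dd s}{\sqrt{(4-s^2)(4s^2-t^2)}}$ for the relevant kernel $J(t)$. Carrying out the requisite integrations by parts and invoking the classical relations between the elliptic integrals $K$, $E$ (that is, $F$ and $G$) and their antiderivatives should reduce every partial-range moment to the explicit combination of $F(k)$, $G(k)$ and $c_0(k)$ appearing in \eqref{eq:34}; the final bookkeeping is arranged so that subtracting $\frac{k^2+8}8\,\m(Q_k)$ cancels the redundant $\log k$-weighted and purely elementary contributions common to both sides.
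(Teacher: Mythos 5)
Your setup is sound and matches the paper's: integrating out $z$ via Pritsker's one-variable formula yields exactly the paper's starting identity, your reduction to integrals over $\{R\le k\}$ using $\E[\log R]=0$ is correct, and your treatment of the regime $k\ge 4$ (with $\E_\D[R^2]=9/4$) is a complete and correct proof of that half of the theorem. The identification of the torus density of $R$ with $F(t)$ is also right and is what the paper uses.

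However, for $0<k<4$ there is a genuine gap: the entire content of the theorem is the explicit set of coefficients multiplying $c_0(k)$, $F(k)$, and $G(k)$, and your proposal never derives them. You correctly isolate the hard objects --- the partial-range disk moments $\PP_\D(R\le k)$, $\E_\D[\1_{R\le k}\log R]$, $\E_\D[\1_{R\le k}R^2]$ --- but then assert that integration by parts and elliptic-integral identities ``should reduce'' them to the displayed combination. That is a plan, not a proof, and the route you choose (conditioning on $S_1=|1+x|$ with density $\tfrac{2s}{\pi}\arccos(s/2)$ and integrating over the hyperbolic region $s_1s_2\le k$, producing $\arccos(s/2)\arccos(k/(2s))$ cross-terms) is substantially harder to push through than the paper's. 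The paper's crucial device, which you miss, is structural: comparing Mellin transforms shows that the disk density $p_U(t)$ of $R$ satisfies $p_U(t)=t\,y_0(t)$ where $\theta^2 y_0=tF(t)$ with $y_0(4)=y_0'(4)=0$ (equivalently $\frac{\Gamma(1+s)^2}{\Gamma(\frac{3+s}{2})^4}=\frac{1}{(s+1)^2}\frac{\Gamma(2+s)^2}{\Gamma(\frac{3+s}{2})^4}$). This second-order inhomogeneous ODE linking the disk density directly to the torus density $F$ is what makes two integrations by parts convert $\int_k^4 p_U(t)(\cdots)\dd t$ into boundary terms ($p_U(k)/k=\m(Q_k)-\log k$ and $\theta(p_U/t)(k)=-c_0(k)$) plus moments $c_1,c_2,d_1$ of $F$; a further bank of contiguous-relation identities among $c_n,d_n$ and the corresponding $G$-moments (the paper's ``matrix'' lemma) then collapses everything to $c_0(k)$, $F(k)$, $G(k)$. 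Without either that ODE or an equivalent mechanism, your partial-range moments remain unevaluated and the coefficients $-\tfrac{9}{8k}-\tfrac{29k}{64}+\tfrac{17k^3}{128}$ and $-\tfrac{9}{8k}-\tfrac{49k}{32}$ are not established.
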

\begin{rem}Note that for $k\in\C$, we have $\m_{\D}(x+y+k)=\m_{\D}(x+y+|k|)$, $\m_{\D}(Q_k)=\m_{\D}(Q_{|k|})$ and similarly with the respective classical Mahler measures. Thus, both Theorem \ref{thm:x+y+k} and Theorem \ref{thm:Qk}  cover all possible complex values of $k$. \end{rem}
\begin{rem} As we will later see, the terms $\arccos\left(\frac{k}{2}\right)$ and $c_0(k)$ appearing in Theorem \ref{thm:x+y+k} and Theorem \ref{thm:Qk} can be interpreted as volumes of Deninger cycles that arise naturally in the Mahler measure computations of these families. 
\end{rem}

The proof of Theorem \ref{thm:Qk} follows on similar lines as the second proof of Theorem \ref{thm:x+y+k}. The argument going through the computation of the areal Zeta Mahler function seems to lead to complicated expressions involving Meijer $G$-functions, and we decided not to pursue this direction, since the main focus of our results is the areal Mahler measure. 

The formulas appearing in Theorems \ref{thm:x+y+k} and \ref{thm:Qk}, and the formulas for the classical Mahler measure (Remark \ref{rem:CM} and Lemma \ref{Lem:ordinaryMM}) also allow for a rapid numerical computation of the areal Mahler measure. In contrast, directly computing them from the definition would involve evaluating a four-fold and a six-fold integral, respectively. For example, we find by using PARI/GP \cite{PARI}
\begin{equation}\label{eq:arealBoyd}
\m_{\D}(Q_1) = 0.181650509823419975804057948562341831685\ldots.
\end{equation}
Numerical computations are central to finding conjectures relating Mahler measures and their variants to special values of $L$-functions and other arithmetically significant functions.  For example, an evaluation like \eqref{eq:arealBoyd} could lead to interesting identities such as equation \eqref{eq:Boyd}.


The article is organized as follows. In Section \ref{sec:bg}, we first present some results and definitions that are required in preparation for subsequent sections. Section \ref{sec:x+y+k} concerns the two proofs of Theorem \ref{thm:x+y+k}. We begin by realizing the areal Zeta Mahler function as a solution to a hypergeometric differential equation in Subsection \ref{sec:azmmkxy}, following which the areal Mahler measure is obtained in Subsection \ref{sec:ammkxy}. The second proof, involving a direct computation of the integral, is described in Subsection \ref{sec:direct}. We prove Theorem \ref{thm:Qk} in Section \ref{sec:Qk} using a similar direct computation but with a more complicated density function. Finally, we end with Section \ref{sec:vol}, where we provide further context to the terms appearing in Theorems \ref{thm:x+y+k} and \ref{thm:Qk}, by relating them to volumes of Deninger cycles and also providing a modular interpretation. 

\subsection*{Acknowledgements} The authors would like to thank Wadim Zudilin for helpful discussions.
This work has been supported by the Natural Sciences and Engineering Research Council of Canada, (RGPIN-2022-03651 to ML),  the Fonds de recherche du Qu\'ebec - Nature et technologies, (Projet de recherche en \'equipe 300951 to ML), the  Institut des sciences math\'ematiques (doctoral fellowships to SSN and SR, postdoctoral fellowship to BR),  the Centre de recherches math\'ematiques (postdoctoral fellowship to BR),   and the Czech Science Foundation GA\v{C}R, (grant 21-00420M to SR).

\section{Background}
\label{sec:bg}

In this section we collect some definitions and previous results that will be useful for the rest of the article. 

We start by giving some background to understand the appearance of $L$-functions in formulas such as \eqref{eq:Smyth}, \eqref{eq:k=1}, and \eqref{eq:k=sqrt2}. These special values arise from evaluations of the dilogarithm. This function is defined for $z\in \C$ such that $|z|<1$ by 
\[\Li_2(z)=\sum_{n=1}^\infty \frac{z^n}{n^2}.\]
It can be analytically continued to $\C\setminus [0,1)$ by 
\[\Li_2(z)=-\int_0^z \frac{\log(1-t)}{t} \dd t.\]
The Bloch--Wigner dilogarithm is given by 
\begin{equation}\label{eq:BWdilog}  D(z) := \im \Big(\Li_2(z) + \log(1-z) \log|z|\Big).\end{equation} 
It is a continuous function in $\PP^1(\C)$, which is real analytic in $\C\setminus \{0,1\}$. The following evaluations yield special values of Dirichlet $L$-functions
\[D\left(e^{\pi i/3}\right) =\frac{3\sqrt{3}}{4} L(\chi_{-3},2),\qquad D(i)=L(\chi_{-4},2),\]
where $\chi_{-3}=\left(\frac{-3}{\cdot}\right)$ and $\chi_{-4}=\left(\frac{-4}{\cdot}\right)$ are the Dirichlet characters of conductors $3$ and $4$ respectively. 

An example of the connection between dilogarithms and Mahler measure is given by  Cassaigne and Maillot's formula.
\begin{thm} \cite{Maillot}\label{thm:CM}
Let $a,b,c \in \C^*$. We have that  
\[\pi \m(a+bx+cy) = \left \{ \begin{array}{ll} D\left(\left|\frac{a}{b}\right| e^{i \gamma}\right) + \alpha \log |a| + \beta \log |b| + \gamma \log |c| &
\triangle,\\ \\
\pi \log \max \{ |a|, |b|, |c|\} & \hspace{-0.6cm}\mbox{not}\: \triangle,
\end{array}\right.
\] 
where $\Delta$ denotes the property that  $|a|, |b|,$ and $|c|$ are the lengths of the sides of a triangle with angles $\alpha$, $\beta$ and $\gamma$ forming the respective opposite angles, and $D$ denotes the  Bloch--Wigner dilogarithm. 
\end{thm}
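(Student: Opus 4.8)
The plan is to reduce the two-dimensional Mahler measure to a one-dimensional circle integral by applying Jensen's formula in the variable $y$, and then evaluate that integral by a Fourier expansion, recognizing the resulting series as a value of the dilogarithm. Writing $x=e^{i\theta}$ and regarding $a+bx+cy$ as a linear polynomial in $y$, Jensen's formula gives
\[
\frac{1}{2\pi i}\int_\TT\log\bigl|(a+bx)+cy\bigr|\frac{\dd y}{y}=\log\max\{|a+bx|,|c|\},
\]
so that $\pi\,\m(a+bx+cy)=\frac12\int_0^{2\pi}\log\max\{|a+be^{i\theta}|,|c|\}\dd\theta$. As $\theta$ runs over $[0,2\pi)$, the quantity $|a+be^{i\theta}|$ sweeps the interval $[\,\bigl||a|-|b|\bigr|,\,|a|+|b|\,]$. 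If $|c|\ge|a|+|b|$ or $|c|\le\bigl||a|-|b|\bigr|$ — precisely the non-triangle cases — the maximum is attained by a single term throughout, and a second application of Jensen (in the latter case) yields $\pi\log\max\{|a|,|b|,|c|\}$, giving the second branch of the formula.

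In the triangle case I would split the circle according to the sign of $|a+be^{i\theta}|-|c|$. Writing $\omega$ for the angle between the vectors $a$ and $be^{i\theta}$, one has $|a+be^{i\theta}|^2=|a|^2+|b|^2+2|a||b|\cos\omega$, and the law of cosines $|c|^2=|a|^2+|b|^2-2|a||b|\cos\gamma$ shows that the threshold $|a+be^{i\theta}|=|c|$ occurs exactly at $\omega=\pm(\pi-\gamma)$, where $\gamma$ is the angle opposite $|c|$. Hence $|a+be^{i\theta}|>|c|$ on an arc of angular measure $2(\pi-\gamma)$ and $|a+be^{i\theta}|<|c|$ on the complementary arc of measure $2\gamma$. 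The constant piece contributes $\frac12\cdot 2\gamma\log|c|=\gamma\log|c|$, and it remains to evaluate $\frac12\int\log|a+be^{i\theta}|\dd\theta$ over the arc $\omega\in(-(\pi-\gamma),\pi-\gamma)$.

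For that arc integral I would set $\rho=|a|/|b|$ (assuming $\rho<1$, the case $\rho>1$ being symmetric and $\rho=1$ following by continuity) and use $\log|a+be^{i\theta}|=\log|b|+\re\log(1+\rho e^{i\omega})=\log|b|+\sum_{n\ge1}\frac{(-1)^{n-1}}{n}\rho^n\cos(n\omega)$. Integrating term by term over the symmetric arc and simplifying $\sin\bigl(n(\pi-\gamma)\bigr)=(-1)^{n-1}\sin(n\gamma)$ collapses the series to $\sum_{n\ge1}\frac{\rho^n}{n^2}\sin(n\gamma)=\im\Li_2(\rho e^{i\gamma})$, while the constant term yields $(\pi-\gamma)\log|b|$. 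At this stage
\[
\pi\,\m(a+bx+cy)=\gamma\log|c|+(\pi-\gamma)\log|b|+\im\Li_2\!\left(\left|\tfrac ab\right|e^{i\gamma}\right),
\]
and the final step is to repackage $\im\Li_2$ as the Bloch--Wigner dilogarithm. Using the law of sines $\rho=\sin\alpha/\sin\beta$ together with $\beta=\pi-\alpha-\gamma$, a short computation gives $1-\rho e^{i\gamma}=\frac{\sin\gamma}{\sin\beta}e^{-i\alpha}$, hence $\arg(1-\rho e^{i\gamma})=-\alpha$; since $\log|\rho e^{i\gamma}|=\log|a|-\log|b|$, the definition \eqref{eq:BWdilog} gives $\im\Li_2(\rho e^{i\gamma})=D(\rho e^{i\gamma})+\alpha(\log|a|-\log|b|)$. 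Substituting and using $\alpha+\beta+\gamma=\pi$ to combine the $\log|b|$ terms into $\beta\log|b|$ produces the asserted expression $D(|a/b|e^{i\gamma})+\alpha\log|a|+\beta\log|b|+\gamma\log|c|$.

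The main obstacle is the geometric bookkeeping of the triangle case: correctly identifying the threshold angles and the two arc lengths in terms of $\gamma$ via the law of cosines, and then converting the Clausen-type series to $D$ through the argument computation $\arg(1-\rho e^{i\gamma})=-\alpha$. Two further technical points must be addressed: the convergence and branch issues at $\rho=1$ (handled by continuity) and at $\rho>1$ (handled by the symmetric factorization $\log|a|+\re\log(1+\tfrac ba e^{-i\theta})$), and the observation that although the final argument $|a/b|e^{i\gamma}$ appears to break the symmetry among $a,b,c$, the resulting sum is in fact symmetric — consistent with the invariance of $\m$ under permuting the monomials, and verifiable directly from the standard functional equations of $D$.
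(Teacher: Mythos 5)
The paper does not prove Theorem \ref{thm:CM}: it is quoted from \cite{Maillot} as a known result, so there is no internal proof to compare against. Judged on its own, your argument is correct and complete in outline, and it follows the standard elementary route (the one going back to Smyth's computation): Jensen in $y$ to reduce to $\frac12\int_0^{2\pi}\log\max\{|a+be^{i\theta}|,|c|\}\dd\theta$, the law of cosines to locate the threshold angles $\omega=\pm(\pi-\gamma)$ and the arc lengths $2(\pi-\gamma)$ and $2\gamma$, the Fourier expansion of $\log|1+\rho e^{i\omega}|$ collapsing to $\im\Li_2(\rho e^{i\gamma})$, and the identity $1-\rho e^{i\gamma}=\frac{\sin\gamma}{\sin\beta}e^{-i\alpha}$ to repackage things as $D$. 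I checked the sign bookkeeping ($\sin(n(\pi-\gamma))=(-1)^{n-1}\sin(n\gamma)$ cancelling the $(-1)^{n-1}$ from the logarithm series, and $\im\Li_2(z)=D(z)-\arg(1-z)\log|z|$ with $\arg(1-\rho e^{i\gamma})=-\alpha$) and it all works out to $(\pi-\gamma-\alpha)\log|b|=\beta\log|b|$. The one step you should make explicit rather than wave at: in the case $|a|>|b|$ your symmetric factorization produces $D\left(\left|\frac{b}{a}\right|e^{i\gamma}\right)$ together with $\beta(\log|b|-\log|a|)$, and matching the stated formula requires the functional equation $D(1/\bar z)=D(z)$ (equivalently $D(\bar z)=-D(z)$ combined with $D(1/z)=-D(z)$) applied to $z=\left|\frac{a}{b}\right|e^{i\gamma}$; this is exactly the ``standard functional equation'' you allude to, and once cited the two branches agree. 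The non-triangle case and the continuity argument at $\rho=1$ are fine as written.
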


Frequently the Mahler measure appears as evaluations of hypergeometric functions, such as in equation \eqref{eq:FRV}.  The generalized hypergeometric function is 
defined by 
\begin{equation*}
 _pF_{q}\!\left(\left.{\begin{matrix}a_{1},\dots ,a_{p}\\b_{1},\dots ,b_{q}\end{matrix}}\;\right|\,z\right)= \sum_{n=0}^\infty \frac{(a_1)_n\cdots(a_p)_n}{(b_1)_n\cdots(b_q)_n} \, \frac {z^n} {n!},\end{equation*}
where $b_j\not\in\Z_{\leq 0}$ and where $(a)_n$ denotes the Pochhammer symbol given by 
 $(a)_0 = 1$, and for $n\geq 1$, 
\[
(a)_n = a(a+1)(a+2) \cdots (a+n-1).\]
We will often work with the condition $p=q+1$, in which case the series converges for $|z|<1$. In particular, when $p=2$ and $q= 1,$ it is called the Gauss hypergeometric function. This function has the following properties. 
 

\begin{thm}(Gauss Hypergeometric Theorem, Eq 15.1.20 in \cite{AS})\label{thm:Gauss} Let $a, b, c \in \C$ such that $c \not \in \Z_{\leq 0}$ and $\re(c-a-b)>0$. Then 
\[\pFq{2}{1}{a,b}{c}{1}=\frac{\Gamma(c)\Gamma(c-a-b)}{\Gamma(c-a)\Gamma(c-b)}.\]
\end{thm}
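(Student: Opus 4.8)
The plan is to deduce the identity from Euler's integral representation of the Gauss hypergeometric function together with the Beta integral, and then to remove the auxiliary hypotheses by analytic continuation in the parameters. First I would establish, for $\re(c)>\re(b)>0$ and $z\notin[1,\infty)$, the representation
\[
\pFq{2}{1}{a,b}{c}{z}=\frac{\Gamma(c)}{\Gamma(b)\Gamma(c-b)}\int_0^1 t^{b-1}(1-t)^{c-b-1}(1-zt)^{-a}\dd t.
\]
This follows by expanding $(1-zt)^{-a}=\sum_{n\ge0}\frac{(a)_n}{n!}(zt)^n$ via the binomial series (valid for $|zt|<1$), integrating term by term against $t^{b-1}(1-t)^{c-b-1}$, and recognizing each integral as a Beta value $B(b+n,c-b)=\frac{\Gamma(b+n)\Gamma(c-b)}{\Gamma(c+n)}$; the ratios $\Gamma(b+n)/\Gamma(b)=(b)_n$ and $\Gamma(c)/\Gamma(c+n)=1/(c)_n$ then reproduce the defining series term by term.

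Next I would let $z\to 1^-$. On the right-hand side the integrand tends to $t^{b-1}(1-t)^{c-a-b-1}$, and the hypothesis $\re(c-a-b)>0$ guarantees integrability at the endpoint $t=1$, so by dominated convergence (or Abel's theorem applied to the series on the left) the limit yields
\[
\pFq{2}{1}{a,b}{c}{1}=\frac{\Gamma(c)}{\Gamma(b)\Gamma(c-b)}\int_0^1 t^{b-1}(1-t)^{c-a-b-1}\dd t=\frac{\Gamma(c)}{\Gamma(b)\Gamma(c-b)}\,B(b,c-a-b).
\]
Applying $B(b,c-a-b)=\frac{\Gamma(b)\Gamma(c-a-b)}{\Gamma(c-a)}$ and cancelling $\Gamma(b)$ produces the claimed formula, but so far only under the restriction $\re(c)>\re(b)>0$. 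To obtain the statement in its full generality (namely $c\notin\Z_{\le0}$ and $\re(c-a-b)>0$) I would invoke analytic continuation: the series $\pFq{2}{1}{a,b}{c}{1}=\sum_{n\ge0}\frac{(a)_n(b)_n}{(c)_n\,n!}$ converges locally uniformly in this region (its terms decay like $n^{-1-\re(c-a-b)}$ by Stirling's estimate for the Pochhammer ratios), hence defines an analytic function of $(a,b,c)$ there, while the right-hand side is manifestly meromorphic; since the two agree on the nonempty open subset $\{\re(c)>\re(b)>0\}$, they agree throughout by the identity theorem.

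The main obstacle is the boundary analysis at $z=1$ (equivalently $t=1$): it is precisely here that the condition $\re(c-a-b)>0$ is needed, both to make the limiting integral converge and to justify interchanging the limit with the integration; the subsequent analytic continuation is routine once convergence of the series is controlled by the same estimate.

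An alternative, entirely elementary route avoiding integrals would use Gauss's contiguous relation to derive
\[
\pFq{2}{1}{a,b}{c}{1}=\frac{(c-a)(c-b)}{c(c-a-b)}\,\pFq{2}{1}{a,b}{c+1}{1},
\]
iterate it to write $\pFq{2}{1}{a,b}{c}{1}$ as a finite product times $\pFq{2}{1}{a,b}{c+N}{1}$, and then let $N\to\infty$: one identifies the resulting infinite product with the Gamma quotient $\frac{\Gamma(c)\Gamma(c-a-b)}{\Gamma(c-a)\Gamma(c-b)}$ through the Weierstrass product for $1/\Gamma$ (the balance of numerator and denominator factors, $(c-a)+(c-b)=c+(c-a-b)$, being exactly what makes the product converge). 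In that approach the hypothesis $\re(c-a-b)>0$ reappears as the condition simultaneously guaranteeing $\pFq{2}{1}{a,b}{c+N}{1}\to1$ and the convergence of the product, so it remains the crux of the argument either way.
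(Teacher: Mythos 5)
The paper does not prove this statement at all: it is quoted verbatim as a classical result (Gauss's summation theorem, Eq.\ 15.1.20 in Abramowitz--Stegun), so there is no internal proof to compare against. Your argument is correct and is one of the two standard proofs. The first route (Euler's integral, which the paper itself records separately as its Theorem on the integral representation of $\pFq{2}{1}{a,b}{c}{z}$, followed by the Beta evaluation at $z\to1^-$ and analytic continuation in the parameters) is sound: the domination $|(1-zt)^{-a}|\le\max\bigl((1-t)^{-\re a},1\bigr)$ for real $z\in[0,1)$ makes the passage to the limit legitimate exactly under $\re(c-a-b)>0$, and the continuation step works because the region $\{c\notin\Z_{\le0},\ \re(c-a-b)>0\}$ remains connected after deleting the codimension-two sets $c\in\Z_{\le0}$, while the series at $z=1$ converges locally uniformly there by the Stirling estimate $\frac{(a)_n(b)_n}{(c)_n\,n!}=O\bigl(n^{-1-\re(c-a-b)}\bigr)$. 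Your alternative via the contiguous relation and the telescoping product is essentially Gauss's original argument and is equally valid; it trades the integral representation for the Weierstrass product of $1/\Gamma$, and it correctly isolates $\re(c-a-b)>0$ as the condition governing both $\pFq{2}{1}{a,b}{c+N}{1}\to1$ and the convergence of the infinite product. Either route would serve as a complete proof of the cited theorem.
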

The next result gives an integral representation for $\pFq{2}{1}{a,b}{c}{z}$ in certain cases. 
\begin{thm}\cite[Theorem 2.2.1]{AAR}
\label{thm:hyperint} If $\re(c)>\re(b)>0$, then 
\[\pFq{2}{1}{a,b}{c}{z} =\frac{\Gamma(c)}{\Gamma(b)\Gamma(c-b)}\int_0^1 t^{b-1}(1-t)^{c-b-1}(1-zt)^{-a}\dd t,\]
in the $z$-plane cut along the real axis $[1,\infty)$. It is understood that $\arg(t)=\arg(1-t)=0$ and $(1-zt)^{-a}$ has its principal value. 
\end{thm}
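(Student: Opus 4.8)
The plan is to expand the factor $(1-zt)^{-a}$ in the integrand as a binomial series, integrate term by term against the Beta kernel $t^{b-1}(1-t)^{c-b-1}$, recognize the resulting series as the hypergeometric series of the definition, and then pass from the disk $|z|<1$ to the full cut plane by analytic continuation. First I would restrict to $|z|<1$, where the binomial series
\[
(1-zt)^{-a} = \sum_{n=0}^\infty \frac{(a)_n}{n!} (zt)^n
\]
converges uniformly in $t\in[0,1]$. Under the hypothesis $\re(c)>\re(b)>0$, the weights $t^{b-1}$ and $(1-t)^{c-b-1}$ are integrable on $(0,1)$, so I may multiply through and integrate term by term to obtain
\[
\int_0^1 t^{b-1}(1-t)^{c-b-1}(1-zt)^{-a}\dd t = \sum_{n=0}^\infty \frac{(a)_n}{n!}\, z^n \int_0^1 t^{b+n-1}(1-t)^{c-b-1}\dd t.
\]

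Next I would evaluate each inner integral by the Euler Beta integral,
\[
\int_0^1 t^{b+n-1}(1-t)^{c-b-1}\dd t = \frac{\Gamma(b+n)\,\Gamma(c-b)}{\Gamma(c+n)},
\]
which is valid since $\re(b+n)>0$ and $\re(c-b)>0$ hold under our assumptions. Using the identities $\Gamma(b+n)=(b)_n\Gamma(b)$ and $\Gamma(c+n)=(c)_n\Gamma(c)$, the $n$th coefficient becomes $\frac{(b)_n}{(c)_n}\cdot\frac{\Gamma(b)\Gamma(c-b)}{\Gamma(c)}$, and pulling out the $n$-independent constant $\frac{\Gamma(b)\Gamma(c-b)}{\Gamma(c)}$ collapses the sum to
\[
\frac{\Gamma(b)\Gamma(c-b)}{\Gamma(c)}\sum_{n=0}^\infty \frac{(a)_n(b)_n}{(c)_n}\frac{z^n}{n!} = \frac{\Gamma(b)\Gamma(c-b)}{\Gamma(c)}\,\pFq{2}{1}{a,b}{c}{z}.
\]
Multiplying by $\frac{\Gamma(c)}{\Gamma(b)\Gamma(c-b)}$ then gives the claimed identity for all $|z|<1$.

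Finally, to reach the full statement on the $z$-plane cut along $[1,\infty)$, I would invoke analytic continuation. The right-hand integral defines an analytic function of $z$ on $\C\setminus[1,\infty)$: for $z$ ranging over any compact subset, the integrand is jointly continuous in $(z,t)$ and dominated by an integrable function of $t$, so Morera's theorem (equivalently, differentiation under the integral sign) applies. Since this analytic function agrees with $\pFq{2}{1}{a,b}{c}{z}$ on the disk $|z|<1$, the identity theorem forces agreement throughout the cut plane. The main obstacle I anticipate is not any single computation but the careful justification of the term-by-term integration together with the uniformity required for the continuation step near the endpoints $t=0,1$, where the Beta kernel is singular; the conditions $\re(c)>\re(b)>0$ are exactly what secure integrability there, and the branch convention for $(1-zt)^{-a}$ must be tracked so that its principal value is used consistently across the cut.
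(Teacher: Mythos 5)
Your proposal is correct and coincides with the proof behind the paper's citation: the paper states this result without proof, quoting Andrews--Askey--Roy \cite[Theorem 2.2.1]{AAR}, and the argument given there is exactly yours --- expand $(1-zt)^{-a}$ by the binomial series for $|z|<1$, integrate term by term against the Beta kernel using $\int_0^1 t^{b+n-1}(1-t)^{c-b-1}\dd t=\frac{\Gamma(b+n)\Gamma(c-b)}{\Gamma(c+n)}$, and extend to $\C\setminus[1,\infty)$ by analytic continuation of the integral. Your justifications (uniform convergence of the series on $[0,1]$, integrability of the endpoint singularities secured by $\re(c)>\re(b)>0$, and the observation that $1-zt$ avoids the branch cut of the principal power for $z$ in the cut plane and $t\in(0,1]$) are precisely the points the standard proof must and does address, so nothing is missing.
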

We have that the hypergeometric function $\mathcal{W}(z)={} _pF_{q}\!\left(\left.{\begin{matrix}a_{1},\dots ,a_{p}\\b_{1},\dots ,b_{q}\end{matrix}}\;\right|\,z\right)$ satisfies the differential equation \begin{equation}\label{diffeq}
    z\prod_{j=1}^p \left(z\frac{\dd}{\dd z} + a_j\right)\mathcal{W}(z) = z\frac{\dd}{\dd z} \prod_{\ell=1}^q  \left(z\frac{\dd}{\dd z} + b_\ell - 1\right)\mathcal{W}(z).
\end{equation} (See \cite{Norlund}.)

The Meijer $G$-functions $G_{p \, q}^{m \, n}$ are generalizations of hypergeometric functions that may appear as solutions of certain hypergeometric style differential equations. They are given by  \begin{align}\label{eq:G}
 G_{p,q}^{\,m,n}\!\left(\left.{\begin{matrix}a_{1},\dots ,a_{p}\\b_{1},\dots ,b_{q}\end{matrix}}\;\right|\,z\right)={\frac {1}{2\pi i}}\int _{L}{\frac {\prod _{j=1}^{m}\Gamma (b_{j}-s)\prod _{j=1}^{n}\Gamma (1-a_{j}+s)}{\prod _{j=m+1}^{q}\Gamma (1-b_{j}+s)\prod _{j=n+1}^{p}\Gamma (a_{j}-s)}}\,z^{s}\dd s,
 \end{align}
 where the path of integration is defined in \cite[Section 16.17]{dlmf}. 

A key ingredient for the evaluation of the Mahler measure is Jensen's formula, which implies that, for a polynomial $P(x) = a \prod_{j = 1}^d (x - \alpha_j) \in \C[x]$,
\begin{equation}\label{eq:Jensen}
\m(P)=\log|a|+ \sum_{j = 1}^d \log^{+}|\alpha_j|.
    \end{equation}
The following result of Pritsker gives an equivalent to Jensen's formula for the areal Mahler measure.
\begin{thm}\cite[Theorem 1.1]{Pritsker}\label{thm:Pritsker}
For a polynomial $P(x) = a \prod_{j = 1}^d (x - \alpha_j) \in \C[x]$, then
\[\m_{\D}(P) =  \log|a| + \sum_{j = 1}^d \log^{+}|\alpha_j| + \frac{1}{2} \sum_{|\alpha_j| < 1} (|\alpha_j|^2 - 1).\]
\end{thm}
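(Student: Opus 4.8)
The plan is to reduce the planar integral to a one-parameter family of elementary integrals by exploiting the multiplicativity of $\log|\cdot|$. Writing $P(x)=a\prod_{j=1}^{d}(x-\alpha_j)$ and using linearity of the integral, we get
\[
\m_{\D}(P)=\log|a|+\sum_{j=1}^{d}\frac1\pi\int_{\D}\log|x-\alpha_j|\dd A(x),
\]
so everything comes down to evaluating the single-root quantity $I(\alpha):=\frac1\pi\int_{\D}\log|x-\alpha|\dd A(x)$ for an arbitrary $\alpha\in\C$, and then summing the contributions. The target formula will follow once we show that $I(\alpha)=\log^{+}|\alpha|$ when $|\alpha|\ge1$ and $I(\alpha)=\tfrac12(|\alpha|^2-1)$ when $|\alpha|<1$.

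To compute $I(\alpha)$, I would pass to polar coordinates $x=re^{i\theta}$ with $\dd A=r\dd r\dd\theta$ and carry out the angular integration first. The key input is that $z\mapsto\log|z-\alpha|$ is harmonic away from $z=\alpha$, so its mean over the circle $|z|=r$ equals $\log\max(r,|\alpha|)$; concretely, factoring out the larger of $r$ and $|\alpha|$ reduces the computation to the standard identity $\frac1{2\pi}\int_0^{2\pi}\log|1-we^{i\theta}|\dd\theta=\log^{+}|w|$, which vanishes for $|w|<1$. This is essentially Jensen's formula \eqref{eq:Jensen} applied on the circle of radius $r$, and it is the step that demands the most care, since one must correctly distinguish whether $\alpha$ lies inside or outside that circle (the two cases give $\log r$ and $\log|\alpha|$ respectively). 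This yields the radial integral
\[
I(\alpha)=2\int_0^1 r\log\max(r,|\alpha|)\dd r.
\]

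It then remains to evaluate this integral in two regimes. If $|\alpha|\ge1$, the maximum is always $|\alpha|$ and one immediately obtains $I(\alpha)=\log|\alpha|=\log^{+}|\alpha|$, with no correction term. If $|\alpha|<1$, I would split the integral at $r=|\alpha|$; the antiderivative $\int r\log r\dd r=\tfrac{r^2}2\log r-\tfrac{r^2}4$ handles the outer piece, and after cancellation the logarithmic contributions disappear, leaving $I(\alpha)=\tfrac12(|\alpha|^2-1)$, which matches $\log^{+}|\alpha|=0$ plus the correction $\tfrac12(|\alpha|^2-1)$. Summing these case-by-case evaluations over $j$ and restoring the $\log|a|$ term reproduces exactly the claimed formula, with the final sum ranging only over the roots inside the open unit disk. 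Roots lying on the unit circle form a measure-zero set and contribute nothing to either integral, so they may be assigned to either case without affecting the outcome.
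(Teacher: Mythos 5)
Your proof is correct: the reduction to the single-root potential $I(\alpha)=\frac1\pi\int_{\D}\log|x-\alpha|\dd A(x)$, the angular averaging via Jensen's formula giving $\log\max(r,|\alpha|)$, and the two-case radial integral all check out, and the edge case $|\alpha_j|=1$ is handled consistently. The paper itself only cites this result from Pritsker without reproducing a proof, and your argument is essentially the standard potential-theoretic computation underlying Pritsker's Theorem 1.1, so there is nothing to add.
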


In certain cases, the Mahler measure and the areal Mahler measure have particularly simple expressions depending on the size of the coefficients. 
\begin{lem}\label{lem:Pritsker} \cite[Example 5.2,c]{Pritsker}
Let $P(x_1,\dots,x_n) \in \C[x_1,\dots, x_n]$ be a polynomial of total degree $d$ given by 
\[P(x_1,\dots,x_n)=\sum_{k_1+\cdots+k_n\leq d} a_{k_1,\dots,k_n}x_1^{k_1}\cdots x_n^{k_n}\]
with the property that 
\[|a_{0,\dots,0}| \geq \sum_{k_1+\cdots+k_n\leq d} |a_{k_1,\dots,k_n}|.\]
Then 
\[\m_{\D}(P)=\m(P)=\log|a_{0,\dots,0}|.\]
\end{lem}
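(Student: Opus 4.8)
\emph{Proof plan.} The idea is to use the dominance of the constant coefficient to confine the values of $P$ to a half-plane on which $\log$ has a single-valued holomorphic branch, and then to read off both Mahler measures as mean values of the (pluri)harmonic function $\log|P|$. Since multiplying $P$ by a unimodular constant alters neither $|P|$ nor either Mahler measure, I would first assume without loss of generality that $a_{0,\dots,0}=|a_{0,\dots,0}|>0$. Writing $\underline x^{\,\underline k}=x_1^{k_1}\cdots x_n^{k_n}$ for $\underline k=(k_1,\dots,k_n)$, the hypothesis (read as $|a_{0,\dots,0}|\ge\sum_{\underline k\neq \underline 0}|a_{\underline k}|$, the constant coefficient dominating the remaining ones) gives, for every $\underline x$ in the closed polydisc $\overline{\D}^{\,n}$,
\[
\left|P(\underline x)-a_{0,\dots,0}\right|\le \sum_{\underline k\neq \underline 0}|a_{\underline k}|\,\bigl|\underline x^{\,\underline k}\bigr|\le \sum_{\underline k\neq \underline 0}|a_{\underline k}|\le a_{0,\dots,0},
\]
the inequalities being strict as soon as some $|x_i|<1$. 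Hence $P$ maps the open polydisc $\D^n$ into the open disc $\{\,|w-a_{0,\dots,0}|<a_{0,\dots,0}\,\}\subset\{\re w>0\}$. In particular $P$ is zero-free on $\D^n$, so the principal branch $\log P$ is holomorphic there and $\log|P|=\re\log P$ is pluriharmonic on $\D^n$; any zeros of $P$ lie on the boundary $\partial(\D^n)$, where $\log|P|$ has at worst logarithmic, hence locally integrable, singularities.

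For the classical Mahler measure I would integrate $\log P$ over the subtori $\{|x_i|=r\}$ and invoke the iterated mean value property (equivalently Cauchy's formula) of the holomorphic function $\log P$, obtaining $(2\pi i)^{-n}\int\log P\,\tfrac{\dd x_1}{x_1}\cdots\tfrac{\dd x_n}{x_n}=\log P(0,\dots,0)=\log a_{0,\dots,0}$ for each polyradius $r<1$. Letting $r\to1$ and taking real parts then yields $\m(P)=\re\log a_{0,\dots,0}=\log|a_{0,\dots,0}|$, in agreement with the univariate formula \eqref{eq:Jensen}. For the areal Mahler measure I would instead use the solid (area) mean value property of harmonic functions: for the other variables fixed in $\D^{\,n-1}$, the map $x_1\mapsto\log|P|$ is harmonic on $|x_1|<1$, so $\frac{1}{\pi r^2}\int_{|x_1|\le r}\log|P|\,\dd A(x_1)$ equals the value at $x_1=0$; iterating this averaging over $x_2,\dots,x_n$ and letting each radius tend to $1$ gives $\frac{1}{\pi^n}\int_{\D^n}\log|P|\,\dd A(x_1)\cdots\dd A(x_n)=\log|P(0,\dots,0)|=\log a_{0,\dots,0}$. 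This delivers both equalities $\m_{\D}(P)=\m(P)=\log|a_{0,\dots,0}|$ simultaneously, and in the one-variable case it recovers the relevant instance of Theorem \ref{thm:Pritsker}, where the dominance forces $\log^+|\alpha_j|=0$ and $|\alpha_j|\ge 1$ for all roots.

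The main obstacle, and the only place requiring genuine care, is the passage $r\to 1$ to the full closed polydisc: the mean value identities hold \emph{a priori} only on $\{|x_i|\le r\}$ with $r<1$, and one must justify that the limit may be exchanged with the integral despite the boundary zeros of $P$. This I would handle by bounding $\log|P|$ near its boundary zero set, which is of positive real codimension, so that the logarithmic singularities are dominated by a fixed integrable function; dominated convergence then legitimizes the limit, while Fubini's theorem justifies the iterated averaging for the signed but integrable integrand $\log|P|$. Everything else is the elementary geometry of the disc $\{|w-a_{0,\dots,0}|<a_{0,\dots,0}\}$ lying in the right half-plane established above.
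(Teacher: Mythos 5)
The paper offers no proof of this lemma --- it is quoted verbatim from Pritsker \cite[Example 5.2,c]{Pritsker} --- so there is nothing internal to compare against; your argument stands on its own, and it is correct. Your route (coefficient dominance $\Rightarrow$ $P(\D^n)$ lies in the disc $\{|w-a_{0,\dots,0}|<|a_{0,\dots,0}|\}\subset\{\re w>0\}$, hence $P$ is zero-free on the open polydisc, hence $\log|P|$ is pluriharmonic and both Mahler measures are read off from the circular and solid mean value properties) is the natural one and matches the spirit of Pritsker's example. Two small points. First, your parenthetical ``the inequalities being strict as soon as some $|x_i|<1$'' is overstated: the triangle inequality need not be strict, and $\sum_{\underline k\neq\underline 0}|a_{\underline k}||\underline x^{\underline k}|<\sum_{\underline k\neq\underline 0}|a_{\underline k}|$ only when some nonzero non-constant coefficient actually involves a variable with $|x_i|<1$. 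What you really need --- and what does hold --- is that on the \emph{open} polydisc every non-constant monomial has modulus $<1$, so $|P-a_{0,\dots,0}|<a_{0,\dots,0}$ there unless $P$ is constant, in which case the claim is trivial; you should also note $a_{0,\dots,0}\neq 0$ is implicitly required for the statement to be meaningful. Second, for the areal equality you could shortcut the solid mean value argument by observing that for a.e.\ fixed $(x_2,\dots,x_n)\in\D^{n-1}$ the one-variable polynomial $x_1\mapsto P(x_1,x_2,\dots,x_n)$ has all roots of modulus $\geq 1$, so the correction term in Theorem \ref{thm:Pritsker} vanishes and Jensen's formula \eqref{eq:Jensen} gives $\log|P(0,x_2,\dots,x_n)|$ for both the toral and areal one-variable averages; iterating with Fubini then yields $\m_\D(P)=\m(P)=\log|a_{0,\dots,0}|$ without revisiting the $r\to1$ limit separately for each measure. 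Your handling of that limit (upper bound $\log(2|a_{0,\dots,0}|)$ plus local integrability of $\log|P|$ near its boundary zero set) is the standard and adequate justification.
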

A central function used to find formulas for the Mahler measure is the Zeta Mahler function. It was defined by Akatsuka \cite{Akatsuka} for a nonzero rational function $P\in \C(x_1,\dots,x_n)^\times$ as follows
\[ 
Z(s,P) 
:= \frac{1}{(2\pi i)^n} \int_{\TT^n} 
\left|P\left(x_1, \dots, x_n\right)\right|^s 
\frac{\dd x_1}{x_1}\cdots \frac{\dd x_n}{x_n},
\]
where $s$ is a complex variable in a neighborhood of $0$. A key aspect of the Zeta Mahler function is that it satisfies 
\[\left. \frac{\partial}{\partial s} Z(s,P)\right|_{s=0}=\m(P).\]
This object was further studied in \cite{KLO, Biswas, BiswasMurty, SasakiIJNT, Ringeling}. The following case was computed by Akatsuka.
\begin{thm}\cite[Theorem 2]{Akatsuka} \label{thm:Akatsuka} For $k\in \C$ such that $|k|\not =1$, we have that 
\[Z(s, x+k)=\begin{cases} 
\pFq{2}{1}{ -\frac{s}{2},-\frac{s}{2}}{1}{|k|^2}& |k|<1,\\ \\
|k|^s \pFq{2}{1}{ -\frac{s}{2},-\frac{s}{2}}{1}{|k|^{-2}}& |k|>1.
\end{cases}\]
\end{thm}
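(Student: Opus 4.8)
The plan is to reduce the torus integral to a single angular integral and then expand via the binomial series. Parametrizing $x=\e^{i\theta}$ on $\TT$ gives $\frac{\dd x}{2\pi i\,x}=\frac{\dd\theta}{2\pi}$, and since the measure $\dd\theta$ is rotation-invariant (writing $k=|k|\e^{i\phi}$ and shifting $\theta$ by $\phi$) we may replace $k$ by $r:=|k|\ge 0$ without changing the integral. A direct computation then gives $|\e^{i\theta}+r|^2 = 1+2r\cos\theta+r^2 = (1+r\e^{i\theta})(1+r\e^{-i\theta})$, so that
\[
Z(s,x+k)=\frac{1}{2\pi}\int_0^{2\pi}\left(1+2r\cos\theta+r^2\right)^{s/2}\dd\theta.
\]
Note that for $r\ne 1$ we have $1+2r\cos\theta+r^2\ge (1-r)^2>0$, so the power is unambiguous.

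First I would treat the case $r<1$. Here each factor $1+r\e^{\pm i\theta}$ lies in the right half-plane (its real part is at least $1-r>0$), so the principal branch of $(\cdot)^{s/2}$ is analytic and the product of the two principal powers equals $(1+2r\cos\theta+r^2)^{s/2}$. Expanding each factor by the binomial series,
\[
(1+r\e^{i\theta})^{s/2}=\sum_{m\ge 0}\binom{s/2}{m}r^m\e^{im\theta},
\]
and similarly for the conjugate factor, both series converging absolutely and uniformly in $\theta$ since $r<1$. Multiplying the two series, integrating term by term (justified by the uniform convergence) and using $\frac{1}{2\pi}\int_0^{2\pi}\e^{i(m-n)\theta}\dd\theta=\delta_{m,n}$ leaves only the diagonal terms:
\[
Z(s,x+k)=\sum_{m\ge 0}\binom{s/2}{m}^2 r^{2m}.
\]
Rewriting $\binom{s/2}{m}=(-1)^m(-s/2)_m/m!$ turns the summand into $\frac{(-s/2)_m(-s/2)_m}{(1)_m}\frac{(r^2)^m}{m!}$, which is exactly the series for $\pFq{2}{1}{-s/2,-s/2}{1}{r^2}=\pFq{2}{1}{-s/2,-s/2}{1}{|k|^2}$, as claimed.

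For $r>1$ I would factor out the dominant term: $1+2r\cos\theta+r^2=r^2\big(1+2r^{-1}\cos\theta+r^{-2}\big)$, so that $(1+2r\cos\theta+r^2)^{s/2}=r^s\,(1+2r^{-1}\cos\theta+r^{-2})^{s/2}$. Since $r^{-1}<1$, the previous computation applies verbatim with $r$ replaced by $r^{-1}$, yielding $Z(s,x+k)=|k|^s\,\pFq{2}{1}{-s/2,-s/2}{1}{|k|^{-2}}$.

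The only genuinely delicate points are the justification of the term-by-term integration and the branch identity in the case $r<1$ (respectively $r^{-1}<1$): I need the two binomial series to converge uniformly in $\theta$, which holds strictly inside the unit circle, and I need the principal powers of the two conjugate factors to multiply to the principal power of their product, which follows because both factors stay in the right half-plane, so their arguments lie in $(-\pi/2,\pi/2)$ and add without crossing the branch cut. Everything else is a bookkeeping manipulation of Pochhammer symbols, and the hypothesis $|k|\ne 1$ is precisely what keeps us away from the boundary $r=1$ where the expansion fails to converge.
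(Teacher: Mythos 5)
Your argument is correct. Note that the paper does not prove this statement at all: it is imported verbatim as \cite[Theorem 2]{Akatsuka}, so there is no internal proof to compare against. Your derivation --- reducing to the angular integral by rotation invariance, splitting $|x+r|^2$ into the two conjugate factors $1+r\e^{\pm i\theta}$, expanding each by the binomial series, and killing the off-diagonal terms by orthogonality of $\e^{i(m-n)\theta}$ --- is essentially Akatsuka's original argument, and you correctly flag the two genuinely delicate points (uniform convergence permitting term-by-term integration, and the branch identity, which is safe because both factors lie in the right half-plane). The reduction of the case $|k|>1$ to the case $|k|<1$ by factoring out $r^s$ is also the standard step and is handled correctly.
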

We also have the following formula due to Kurokawa, Lal\'in, and Ochiai \cite{KLO}:
\begin{equation}\label{eq:KLO}
Z(s,x+1)=\frac{\Gamma(1+s)}{\Gamma\left(1+\frac{s}{2}\right)^2}.\end{equation}

Lal\'in and Roy \cite{Lalin-Roy} gave an areal version of the Zeta Mahler function:
\begin{equation}\label{eq:ZAMMdefn1}
Z_\mathbb{D}(s,P) 
:= \frac{1}{\pi ^n} \int_{\mathbb{D}^n} 
\left|P\left(x_1, \dots, x_n\right)\right|^s 
\dd A(x_1)\dots \dd A(x_n),\end{equation}
which, when $P$ is a polynomial, converges absolutely and locally uniformly for $\re(s)>\sigma_1(P)$ for some $\sigma_1(P)< 0$ (see Propositions 5.2.1 and 5.2.2 in \cite{Subham-thesis}). 

The areal Zeta Mahler function satisfies that 
\[\left. \frac{\partial}{\partial s} Z_\D(s,P)\right|_{s=0}=\m_\D(P).\]
It is proven in \cite{Lalin-Roy} that 
\begin{equation}
\label{eq:LalRoy}
    Z_{\D}(s,x+1)=\frac{\Gamma(2+s)}{\Gamma\left(2+\frac{s}{2}\right)^2}.
\end{equation}

\section{The areal Mahler measure of $x+y+k$}\label{sec:x+y+k}

In this section, we prove Theorem \ref{thm:x+y+k}.  We remark that Lemma \ref{lem:Pritsker} gives the case $k\geq 2$ immediately, proving equation \eqref{eq:k>2}.
Our strategy for proving equation \eqref{eq:k<2} in  Theorem \ref{thm:x+y+k} is as follows. We will compute the areal Zeta Mahler function of $x+y+k$ in Subsection \ref{sec:azmmkxy}. For this, we will follow ideas of Borwein and Straub \cite{BS}, Borwein, Straub, Wan, and Zudilin \cite{BSWZ}, Ringeling \cite{Ringeling}, and others, to find $Z_{\D}(s, x+y+k)$ as a solution of a certain hypergeometric differential equation. We will then compute  $Z_{\D}'(0,x+y+k)$ to find $\m_\D(x+y+k)$ in Subsection \ref{sec:ammkxy}. The connection with $\m(x+y+k)$ is indirect, consisting of identifying one of the terms in the expression of 
$\m_\D(x+y+k)$ with the special value of a hypergeometric function known to be connected to  $\m(x+y+k)$ by a result of Berndt and Straub \cite{BerndtStraub}. This will be completed in Subsection \ref{sec:comparison}.

\subsection{The areal Zeta Mahler function of $x + y+k$}\label{sec:azmmkxy}
In this subsection we will prove the following result
\begin{thm}\label{thm:ZAMMk+x+y}
For $\re(s)>-2$ not an odd integer and $0\leq  k\leq 2$, we have 
\[Z_{\D}(s, x+y+k)=\alpha_0(s)\left(\frac{k}{2}\right)^{s+3}F_0\left(\frac{k^2}{4};s\right)+\alpha_1(s)F_1\left(\frac{k^2}{4};s\right),\]
where 
\begin{align}
 F_0(z;s) =& 
 \pFq{3}{2}{-\frac{1}{2},\frac{1}{2},\frac{3}{2}}{\frac{5+s}{2},\frac{5+s}{2}}{z}, \label{f0zs} 
 \\ F_1(z;s) =& \pFq{3}{2}{
 -2-\frac{s}{2},-1-\frac{s}{2}, -\frac{s}{2}}{1,-\frac{1+s}{2}}{z},\label{f1zs} 
\end{align}
\begin{align*}\alpha_0(s)  &= -\frac{2^{s+2} \tan \left(\frac{\pi  s}{2}\right) \Gamma\left(1+\frac{s}{2}\right)^2}{\pi  \Gamma\left(\frac{5+s}{2}\right)^2}
\end{align*}
and
\[\alpha_1(s) = \frac{4}{s + 4} \frac{\Gamma(2+s)}{\Gamma\left(2+\frac{s}{2}\right)^2}.\]
Moreover, for $k \geq 2$, 
\[
Z_{\D}(s, x + y + k) = k^s\cdot \pFq{3}{2}{-\frac{s}{2},-\frac{s}{2},\frac{3}{2}}{2,3}{\frac{4}{k^2}}.
\]
\end{thm}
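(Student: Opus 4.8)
The plan is to read $Z_{\D}(s,x+y+k)$ as an expectation and collapse the two inner disk integrals into a single angular average controlled by Akatsuka's Theorem~\ref{thm:Akatsuka}. Write $Z_{\D}(s,x+y+k)=\E\!\left[|X+Y+k|^s\right]$ with $X,Y$ independent and uniform on $\D$ (their law being $\frac1\pi\dd A$), and put $W=X+Y$. Since the law of $W$ is rotation invariant, conditioning on $|W|=\rho$ makes the argument of $W$ uniform, so the conditional expectation is $\frac1{2\pi}\int_0^{2\pi}|k+\rho e^{i\phi}|^s\dd\phi$. By Theorem~\ref{thm:Akatsuka} (after factoring out the larger of $k,\rho$) this equals $k^s\,\pFq{2}{1}{-\frac s2,-\frac s2}{1}{\frac{\rho^2}{k^2}}$ for $\rho<k$ and $\rho^s\,\pFq{2}{1}{-\frac s2,-\frac s2}{1}{\frac{k^2}{\rho^2}}$ for $\rho>k$. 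The two cases of the theorem then correspond exactly to whether the support radius $2$ of $W$ stays below $k$ (so only the first branch occurs) or crosses it.

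For $k\ge 2$ we always have $\rho\le 2\le k$, so only the first branch appears and, expanding the $_2F_1$ and interchanging sum and integral (legitimate for $\re(s)$ in the convergence range),
\[
Z_{\D}(s,x+y+k)=k^s\sum_{n\ge 0}\frac{\big((-\tfrac s2)_n\big)^2}{(n!)^2}\,\frac{\E[|W|^{2n}]}{k^{2n}}.
\]
It remains to compute the even moments. Expanding $|W|^{2n}=(X+Y)^n(\bar X+\bar Y)^n$, independence together with $\E[X^a\bar X^b]=0$ for $a\neq b$ and $\E[|X|^{2a}]=\frac1{a+1}$ reduces $\E[|W|^{2n}]$ to $\sum_{a=0}^n\binom na^2\frac1{(a+1)(n-a+1)}$; writing $\frac1{(a+1)(n-a+1)}=\frac1{n+2}\big(\frac1{a+1}+\frac1{n-a+1}\big)$ and applying the Chu--Vandermonde identity gives $\E[|W|^{2n}]=\frac{2(2n+1)!}{(n+1)!\,(n+2)!}$. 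Substituting this and simplifying through $4^n(\tfrac32)_n=(2n+1)!/n!$, $(2)_n=(n+1)!$, and $(3)_n=(n+2)!/2$ turns the series into $k^s\,\pFq{3}{2}{-\frac s2,-\frac s2,\frac32}{2,3}{\frac4{k^2}}$, which proves the $k\ge 2$ formula.

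For $0\le k\le 2$ the radius crosses $k$ and this clean collapse fails; here I would instead characterize $Z_{\D}(s,x+y+k)$, as a function of $z=k^2/4$, as a solution of a third-order hypergeometric equation of the type~\eqref{diffeq}. The angular average is a $_2F_1$ in $k^2/\rho^2$ (resp.\ $\rho^2/k^2$) and so is annihilated by a second-order hypergeometric operator in $k$; integrating that relation against the lens-area density of $W$ and tracking the boundary terms produced at the moving endpoint $\rho=k$ and at $\rho=2$ should raise the order by one and yield a homogeneous equation whose local exponents at $z=0$ are $0$ and $\frac{s+3}2$. These give the two solutions $F_1(z;s)$ and $(k/2)^{s+3}F_0(z;s)$. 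The connection coefficients are then pinned down by boundary data: letting $k\to 0$ annihilates the $F_0$ term (for $\re(s)>-3$) and forces $\alpha_1(s)=\E[|W|^s]$, which I would evaluate from $\frac2\pi\int_0^2\rho^{s+1}\big(2\arccos\frac\rho2-\frac\rho2\sqrt{4-\rho^2}\big)\dd\rho$ to obtain $\alpha_1(s)=\frac4{s+4}\frac{\Gamma(2+s)}{\Gamma(2+\frac s2)^2}$, while $\alpha_0(s)$ is read off from the coefficient of the non-analytic power $(k/2)^{s+3}$, whose $\tan\!\big(\frac{\pi s}2\big)$ factor signals a Gamma-reflection coming from the connection formula for the hypergeometric solutions.

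The main obstacle is the $0\le k\le 2$ case: deriving the precise third-order operator — in particular verifying that the boundary contributions at the moving point $\rho=k$ cancel, so that the resulting equation is genuinely homogeneous — and then extracting $\alpha_0(s)$ with its exact $\tan\!\big(\frac{\pi s}2\big)\Gamma$-factor. The $k\ge 2$ statement, by contrast, is a self-contained moment computation and should present no difficulty.
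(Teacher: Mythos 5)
Your argument for $k\geq 2$ is sound and essentially the paper's: conditioning on $|X+Y|$ and invoking Theorem \ref{thm:Akatsuka} for the angular average is exactly what Lemmas \ref{densityt2t1} and \ref{lem:Lem1} accomplish (the paper goes through the explicit conditional density and a quadratic transformation, but lands on the same $k^s\cdot\pFq{2}{1}{-\frac{s}{2},-\frac{s}{2}}{1}{\frac{v^2}{k^2}}$), and your Chu--Vandermonde evaluation $\E[|X+Y|^{2n}]=\frac{2(2n+1)!}{(n+1)!\,(n+2)!}$ agrees with the paper's $Z_{\D}(2n,x+y)=\frac{4}{2n+4}\frac{\Gamma(2n+2)}{\Gamma(n+2)^2}$ from Theorem \ref{thm:areal-zeta-x+y}, so the resulting $\pFq{3}{2}{-\frac{s}{2},-\frac{s}{2},\frac{3}{2}}{2,3}{\frac{4}{k^2}}$ is correct.

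The case $0\leq k\leq 2$ --- the actual content of the theorem --- is where your proposal has genuine gaps, and you have correctly located them without closing them. First, you never establish that $Z_{\D}(s,x+y+k)$ satisfies the third-order operator on $0<k<2$: your plan of integrating a second-order relation against the density of $|X+Y|$ and hoping the boundary contributions at the moving endpoint $\rho=k$ cancel is not carried out, and you flag it yourself as unverified. The paper's bridge is entirely different: Lemma \ref{Lem2} shows by a contour/conjugation argument that for $v>k$ the inner integral equals $\re\mathcal{F}(k)-\cot\left(\frac{\pi s}{2}\right)\im\mathcal{F}(k)$, where $\mathcal{F}$ is the boundary value from the upper half-plane of the same analytic expression valid for $v<k$; this gives Proposition \ref{Thm1}, $Z_{\D}(s,x+y+k)=\re\mathcal{G}(k)-\cot\left(\frac{\pi s}{2}\right)\im\mathcal{G}(k)$, and since $\mathcal{G}$ solves an ODE with real coefficients, so does this combination. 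Without a substitute for that step, the claim that the same operator annihilates $Z_{\D}$ below $k=2$ is unsupported. Second, the operator is third order with local exponents $0,0,\frac{3+s}{2}$ at $z=0$, so there is a logarithmic solution $F_2(z;s)+\log(z)F_1(z;s)$; you list only two exponents and never exclude the log term (the paper does so because $Z_{\D}$ has a finite limit as $k\to0$), and your $k\to0$ limit only pins down $\alpha_1$ after that exclusion. Third, ``reading off'' $\alpha_0(s)$ from the coefficient of the non-analytic power is not a determination: the paper fixes it by the second initial condition at $k=2$ together with the explicit connection identity \eqref{eq:3f2-identity} for $\pFq{3}{2}{-\frac{s}{2},-\frac{s}{2},\frac{3}{2}}{2,3}{1}$ and the duplication and reflection formulas, which is where the factor $\tan\left(\frac{\pi s}{2}\right)$ actually comes from. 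As it stands, the proposal proves only the $k\geq 2$ half of the statement.
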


This result implies that for all  $k \geq 0$, $Z_{\D}(s, x+y+k)$ extends to a meromorphic function on the complex plane. Specifically for $k > 2$ it is an entire function.
For $k < 2$, it seems that $Z_{\D}(s, x+y+k)$ has simple poles at all the even negative integers. Curiously enough, the pole structure for $k = 1$ is a bit different: it is actually analytic at $s = - 6$ \footnote{This follows from the fact that $F_0(z;-6) = \frac{1 - 4 z}{(1-z)^{5/2}}$, see \cite{wolfram-hypergeometricy}.}. This can also be seen in Figure \ref{IsThisCool?}.  When $k = 2$, there seems to be a simple pole at $s = - \frac{7}{2}$, however we do not know how to meromorphically continue the function to the entire complex plane in this case.  In Figure \ref{IsThisCool?} we can also see some of the real zeros of $Z_{\D}(s, x+y+k)$. Moreover, if we make a domain coloring plot of $Z_{\D}(s, x+y+1)$ (see Figure \ref{IsThisCool2?}), we see even more zeros. Interestingly enough it seems that these zeros are very close to the vertical line $\re(s) = - \frac{7}{2}$, this is also reflected in Table \ref{tab:zeros}. These observations can be compared with \cite[Theorem 1.3]{Ringeling}. There, it was shown that $Z(s, x + \frac{1}{x} + y + \frac{1}{y} + k)$ has all its non-trivial zeros exactly on the vertical line $\re(s) = - \frac{1}{2}$ for all real $k$.
\begin{figure}[h!]
        \centering
        \subfloat[$k = 1$]{%
            \centering\includegraphics[width=0.45\textwidth]{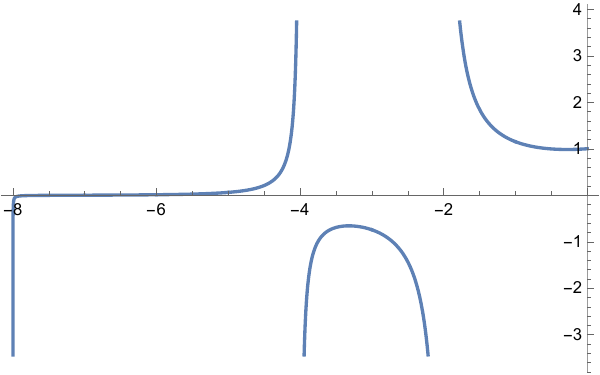}}
        \qquad
        \subfloat[$k = \frac{3}{2}$]{%
            \centering\includegraphics[width=0.45\textwidth]{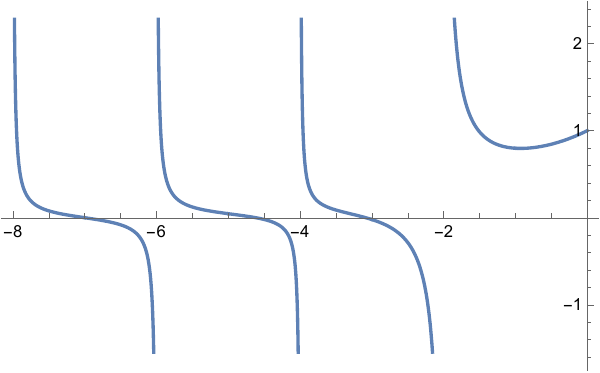}}
        \caption{\label{IsThisCool?} The graph of $Z_{\D}(s, k + x + y)$}
\end{figure}

\begin{figure}[h!]

            \centering\includegraphics[width=0.45\textwidth]{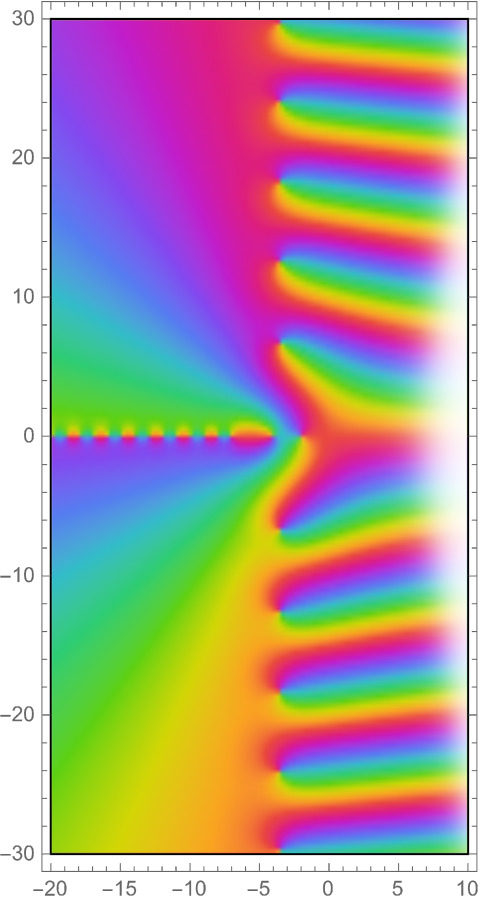}
        \caption{\label{IsThisCool2?} The graph of $Z_{\D}(s, 1 + x + y)$ for $-20 \leq \re(s) \leq 10$ and $|\im(s)| \leq 30$}
\end{figure}

\begin{table}[h!]
  \begin{center}
    \begin{tabular}{c} 
      \textbf{Zeros with increasing imaginary part}\\
      \hline
    $-3.4729  \ldots + \phantom{0}6.767  \ldots \cdot i$\\
    $-3.4918 \ldots + 12.656  \ldots \cdot i$\\
    $-3.4960 \ldots + 18.441  \ldots \cdot i$\\
    $-3.4977 \ldots + 24.194  \ldots \cdot i$\\
    $-3.4985 \ldots + 29.935  \ldots \cdot i$\\
    $-3.4992 \ldots + 41.398  \ldots \cdot i$\\
    $-3.4993 \ldots + 47.125  \ldots \cdot i$\\
    \end{tabular}
    \caption{non-real zeros of $Z_{\D}(s, x + y + 1)$}
    \label{tab:zeros}
  \end{center}
\end{table}


The argument for computing $Z_{\D}(s, x+y+k)$ for $k$ real and positive  starts with the case when $k > 2$. Recall that in this case, $\m_{\D}(x+y+k) = \log (k)$ from \eqref{eq:k>2}. 
We will first express the function $Z_{\D}(s, x+y+k)$ in terms of hypergeometric functions depending on $k$ and $s$ when $k>2$. We will then compute the differential equation satisfied by the expression obtained for $Z_{\D}(s, x+y+k)$. Finally, we will argue that, for general $k$, $Z_{\D}(s,  x + y+k)$ is, in fact, a particular solution of that differential equation.

 Let $X$ and $Y$ be random variables defined by walks of lengths $\rho_1$ and $\rho_2$ along the directions $\theta_1$ and $\theta_2,$ uniformly distributed on $[0, 1),$ respectively. In particular, $X$ takes values $x = \rho_1 e^{2\pi i\theta_1}$ and $Y$ takes values $y=\rho_2 e^{2\pi i \theta_2}.$ Let $Z$ be another random walk of unit length and direction $\theta.$ 
 
 We define a new random variable $T_1$ as the absolute value $|X + Y|.$ Let $p_{T_1}$ denote the probability density function of $T_1,$ which has support on $[0, 2).$ We further define the random variable $T_2$ as the absolute value $|X + Y+kZ|.$ Let $p_{T_2}$ denote the probability density function of $T_2$, which has support on $[k-2, k+2),$ when $k > 2$.

By definition \eqref{eq:ZAMMdefn1}, we have  \begin{align*}
     Z_{\D}(s, x+y+k ) =& \frac{1}{\pi^2}\int_{\D^2} |x+y+k|^s \dd A(x)\dd A(y).
\end{align*}
Parametrizing $x=\rho_1 e^{2i \pi \theta_1}$, $y=\rho_2e^{2i \pi \theta_2}$ with $0\leq\rho_1, \rho_2\leq 1$ and $0 \leq \theta_1, \theta_2\leq 1$, we obtain, 
\begin{align*}
   Z_{\D}(s, x+y+k )   =& 2^2 \int_{[0, 1]^4} | \rho_1 e^{2\pi i\theta_1} + \rho_2 e^{2 \pi i\theta_2}+k|^s \rho_1 \rho_2 \dd \rho_1 \dd \rho_2 \dd \theta_1 \dd \theta_2 \nonumber \\ =& 4 \int_{[0, 1]^5} | \rho_1 e^{2\pi i\theta_1} + \rho_2 e^{2 \pi i\theta_2}+ke^{2\pi i \theta}|^s \rho_1 \rho_2 \dd \rho_1 \dd \rho_2 \dd \theta_1 \dd \theta_2 \dd \theta. 
 \end{align*} 
 We now apply the change of variables $|x+y+ke^{2\pi i\theta}| = u$ and $|x+y| = v$ to obtain, for $k>2$, \begin{align}
      Z_{\D}(s, x+y+k) =& \frac{1}{\pi^2} \int_{0}^1\int_{\D}\int_{\D}|x + y+ k e^{2\pi i \theta} |^s \dd A(x)\dd A(y)\dd \theta \nonumber \\ =& \int_0^2\int_{k-v}^{k+v}u^s\,\PP(T_2=u\,|\,T_1=v)\,\PP(T_1=v) \dd u \dd v,\label{eq:exp}
 \end{align} where the normalized area measures of the variables $x$ and $y$ in the first integral are substituted by the new probability measures obtained from functions of the variables $x$ and $y,$ namely $u$ and $v.$ In \eqref{eq:exp}, we write \[\PP(T_2=u\,|\,T_1=v) = p_{T_2 \vert T_1}(u\vert v) \ \mbox{ and } \ \PP(T_1=v) = p_{T_1}(v),\] where \begin{equation}\label{condprob}
     \PP(T_2=u\,|\,T_1=v) = \frac{\PP(T_2 = u, T_1 = v)}{\PP(T_1 = v)} = \frac{p_{T_2, T_1}(u, v)}{p_{T_1}(v)} = p_{T_2 \vert T_1}(u|v).
 \end{equation} Here $p_{T_2, T_1}$ is the joint probability density function of the random variable $(T_2, T_1).$ 
 \begin{rem}\label{probability}
     Note that, for $k\geq 0$, since $T_2$ is dependent on $T_1,$ the moment generating function of $\log T_2$ can be expressed as follows: \begin{align}
         \E[e^{s\log T_2}] = \E[T_2^s]=& \int_{\max\{0,k-2\}}^{k+2} u^s \PP(T_2 = u) \dd u \nonumber \\ =& \int_{\max\{0,k-2\}}^{k+2} u^s \left(\int_{0}^2 \PP(T_2 = u, T_1 = v) \dd v\right) \dd u. \nonumber
         \end{align}
Since $T_1=|X+Y|$ and $T_2=|X+Y+kZ|$, we have that $\PP(T_2 = u, T_1 = v)=0$ unless $|k-v|<|u|<v+k$. We incorporate this restriction and exchange the order of integration to get 
         \begin{align}
       \E[e^{s\log T_2}]   =&  \int_{0}^2 \int_{|k-v|}^{k+v} u^s \PP(T_2 = u, T_1 = v) \dd u \dd v \nonumber \\ =& \int_{0}^2 \int_{|k-v|}^{k+v} u^s \PP(T_2 = u \vert T_1 = v)\PP(T_1 = v) \dd u \dd v, \label{conditionalt1t2}  
     \end{align} where the last equality follows from \eqref{condprob}. 
     After integrating with respect to $u,$ we obtain \begin{align*}
      \E[e^{s\log T_2}]   =& \int_{0}^2 \E[T_2^s \vert T_1 = v]\PP(T_1 = v) \dd v \nonumber \\ =& \E[\E[T_2^s \vert T_1]], 
     \end{align*}
     where the last equality follows from considering $\E[T_2^s \vert T_1 = v]$ as a function of $v$ (and therefore as a function of $T_1$), and then evaluating the expectation of the function with respect to the variable $T_1.$
     By combining \eqref{eq:exp} and \eqref{conditionalt1t2} together, we conclude that  the areal Zeta Mahler function of $x+y+k$ coincides with the moment generating function of $\log T_2,$ namely $\E\left[e^{s\log T_2}\right].$ 
 \end{rem}

It remains to compute \eqref{eq:exp}, for which we need some auxiliary results. The next statement will be applied to the computation of $ \PP(T_1=v)$ and is interesting in its own right. 
\begin{thm}\label{thm:areal-zeta-x+y} For $\re(s)>-2$, we have that 
\begin{equation*}
    Z_{\D} (s, x+y)=\frac{4}{s+4} \frac{\Gamma(2+s)}{\Gamma\left(2+\frac{s}{2}\right)^2}.
\end{equation*}
\end{thm}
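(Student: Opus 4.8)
The plan is to collapse the defining four-fold integral into a single integral of a Gauss hypergeometric function and then invoke Theorem \ref{thm:Gauss}. Starting from the polar parametrization already recorded above, we have
\[
Z_{\D}(s,x+y) = 4\int_{[0,1]^4}|\rho_1 e^{2\pi i\theta_1}+\rho_2 e^{2\pi i\theta_2}|^s\,\rho_1\rho_2\,\dd\rho_1 \dd\rho_2 \dd\theta_1 \dd\theta_2.
\]
Since the integrand depends on $\theta_1,\theta_2$ only through the difference $\theta_1-\theta_2$, and is $1$-periodic in that difference, integrating over the two angles collapses to a single angular integral over $u=\theta_1-\theta_2\in[0,1]$, giving
\[
Z_{\D}(s,x+y) = 4\int_0^1\int_0^1 \rho_1\rho_2\left(\int_0^1 |\rho_1 e^{2\pi iu}+\rho_2|^s \dd u\right)\dd\rho_1 \dd\rho_2.
\]
In the random-walk language introduced above this is simply $\E[|X+Y|^s]=\E[T_1^s]$.

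The key step is to recognize the inner angular integral. For $\rho_1<\rho_2$, factoring out $\rho_2$ and using $|1+a e^{2\pi i u}|=|e^{2\pi i u}+a|$ with $a=\rho_1/\rho_2<1$ identifies it with the circular Zeta Mahler function $Z(s,x+a)$ of Theorem \ref{thm:Akatsuka}, so that
\[
\int_0^1|\rho_1 e^{2\pi iu}+\rho_2|^s \dd u = \rho_2^s\,\pFq{2}{1}{-\frac{s}{2},-\frac{s}{2}}{1}{\frac{\rho_1^2}{\rho_2^2}},
\]
with the symmetric statement when $\rho_2<\rho_1$. Exploiting the $\rho_1\leftrightarrow\rho_2$ symmetry to restrict to $\rho_1<\rho_2$ (and doubling), the substitution $t=\rho_1^2/\rho_2^2$ decouples the two radial variables: the $\rho_1$-integral produces $\tfrac{\rho_2^2}{2}\int_0^1 \pFq{2}{1}{-\frac{s}{2},-\frac{s}{2}}{1}{t}\dd t$, and the remaining $\rho_2$-integral is $\int_0^1 \rho_2^{s+3}\dd\rho_2=\tfrac{1}{s+4}$, yielding
\[
Z_{\D}(s,x+y) = \frac{4}{s+4}\int_0^1 \pFq{2}{1}{-\frac{s}{2},-\frac{s}{2}}{1}{t}\dd t.
\]

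Finally, integrating the hypergeometric series term by term replaces $\int_0^1 t^n\dd t = 1/(n+1)$, which shifts the lower parameter from $1$ to $2$; hence $\int_0^1 \pFq{2}{1}{-\frac{s}{2},-\frac{s}{2}}{1}{t}\dd t=\pFq{2}{1}{-\frac{s}{2},-\frac{s}{2}}{2}{1}$, and Theorem \ref{thm:Gauss}, applicable precisely because $\re(c-a-b)=\re(2+s)>0$ in our range, evaluates this to $\Gamma(2+s)/\Gamma(2+\tfrac{s}{2})^2$. (One may alternatively recognize the same one-dimensional integral as $Z_{\D}(s,x+1)$ and invoke \eqref{eq:LalRoy} directly.) The main technical point to watch is the interchange of summation and integration when $-2<\re(s)\le -1$, where the endpoint series of the integrand at $t=1$ diverges even though the integral $\int_0^1$ converges; I would handle this by first proving the identity for $\re(s)>-1$ (say $s\ge 0$) and then extending it to the full strip $\re(s)>-2$ by analytic continuation, since both sides are holomorphic there.
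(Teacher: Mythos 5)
Your proposal is correct and follows essentially the same route as the paper's proof: polar coordinates, recognition of the angular integral as Akatsuka's $Z(s,x+a)$, symmetrization in $\rho_1,\rho_2$, a substitution decoupling the radial variables, term-by-term integration shifting the lower parameter from $1$ to $2$, and Gauss's theorem. The only addition is your explicit justification of the term-by-term step on $-2<\re(s)\le -1$ via analytic continuation, a point the paper passes over silently.
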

\begin{proof}
By definition \eqref{eq:ZAMMdefn1}, we have 
\begin{align*}
Z_{\D} (s, x+y)=&\frac{1}{\pi^2}\int_\D \int_\D |x+y|^s \dd A(x)\dd A(y).
\end{align*}
Parametrizing $x=\rho_1 e^{2i \pi \theta_1}$, $y=\rho_2e^{2i \pi \theta_2}$ with $0\leq\rho_1, \rho_2, \theta_1, \theta_2 \leq 1$, we obtain, 
\begin{align*}
Z_{\D} (s, x+y)=&2\int_{[0,1]^4} |\rho_1e^{2i\pi \theta_1}+\rho_2e^{2i\pi \theta_2}|^s \rho_1 \rho_2 \dd \theta_1 \dd \theta_2 \dd \rho_1 \dd \rho_2\\=& 
2\int_{[0,1]^4}\rho_2^s |\rho_1\rho_2^{-1} e^{2i\pi (\theta_1-\theta_2)}+1|^s \rho_1 \rho_2 \dd \theta_1 \dd \theta_2 \dd \rho_1 \dd \rho_2\\
=&4 \int_{0\leq \rho_2\leq \rho_1\leq 1}\rho_1^{s+1}\rho_2\int_0^1  |\rho_2\rho_1^{-1} e^{2i\pi \tau}+1|^s \dd \tau \dd \rho_1 \dd \rho_2 \\ &+4 \int_{0\leq \rho_1\leq \rho_2\leq 1}\rho_1\rho_2^{s+1} \int_0^1  |\rho_1\rho_2^{-1} e^{2i\pi \tau}+1|^s \dd \tau \dd \rho_1 \dd \rho_2,
\end{align*}
where we have set $\tau=\theta_1-\theta_2$. 
We remark that the inner integrals correspond to the Zeta Mahler functions of $x+\rho_2\rho_1^{-1}$ and $x+\rho_1\rho_2^{-1}$.
Applying Theorem \ref{thm:Akatsuka}, we have that 
\begin{align*}
Z_{\D} (s, x+y)=& 4\int_{0\leq \rho_2\leq \rho_1\leq 1}\rho_1^{s+1}\rho_2\cdot 
\pFq{2}{1}{-\frac{s}{2},-\frac{s}{2}}{1}{\rho_2^2\rho_1^{-2} }
 \dd \rho_1 \dd \rho_2  \nonumber \\ &+4\int_{0\leq \rho_1\leq \rho_2\leq 1}\rho_1\rho_2^{s+1} \cdot
 \pFq{2}{1}{-\frac{s}{2},-\frac{s}{2}}{1}{\rho_1^2\rho_2^{-2} } \dd \rho_1 \dd \rho_2. \end{align*} Under the change of variables $\rho_1 \mapsto \rho_2$ and $\rho_2 \mapsto \rho_1,$ the second integral above transforms into the first integral. Further setting $\sigma=\rho_2\rho_1^{-1}$, we obtain
 \begin{align*}
    Z_{\D} (s, x+y)=& 8\int_{0\leq \rho_2\leq \rho_1\leq 1}\rho_1^{s+1}\rho_2 \cdot \pFq{2}{1}{-\frac{s}{2},-\frac{s}{2}}{1}{\rho_2^2\rho_1^{-2} }
 \dd \rho_1 \dd \rho_2\\
 =&8\int_0^1 \rho_1^{s+3}\int_0^1 \sigma \cdot
 \pFq{2}{1}{-\frac{s}{2},-\frac{s}{2}}{1}{\sigma^2 }
  \dd \sigma \dd \rho_1 \\ =& \frac{8}{s+4} \cdot \frac{1}{2} \int_{0}^1 
 \pFq{2}{1}{-\frac{s}{2},-\frac{s}{2}}{1}{\sigma^2 }
  \dd \sigma^2 \\ =& \frac{4}{s+4}
   \pFq{2}{1}{-\frac{s}{2},-\frac{s}{2}}{2}{1}\\
=&\frac{4}{s+4}\frac{\Gamma(2+s)}{\Gamma\left(2+\frac{s}{2}\right)^2},
\end{align*}
where the last equality follows from Theorem \ref{thm:Gauss} (by taking $a = b = -\frac{s}{2}$ and $c=2$).
\end{proof}

The following two lemmas give us formulas for  $\PP(T_1=v)$ and  $\PP(T_2=u\,|\,T_1=v)$.
\begin{lem}\label{densityt1}
    For $0 \leq v \leq 2,$ we have \begin{equation*}
        \PP(T_1=v) = p_{T_1}(v) =\frac{v}\pi\left(2\pi-v\sqrt{4-v^2}-4\arcsin\left(\frac{v}{2}\right)\right).
    \end{equation*}
\end{lem}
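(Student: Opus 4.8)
The plan is to read off from \eqref{eq:exp} that $T_1=|X+Y|$ is the modulus of the sum of two independent random variables $X,Y$, each distributed \emph{uniformly with respect to area} on the unit disk $\D$: the factor $\rho_1\rho_2$ together with the normalization $\tfrac1{\pi^2}$ shows that $X$ and $Y$ each carry the constant density $\tfrac1\pi$ on $\D$. I would then obtain the law of $S=X+Y$ by convolving the two disk densities. Identifying $\C$ with $\R^2$ and writing $f(z)=\tfrac1\pi\1_{\{|z|\le1\}}$ for the common density, the sum has density
\[
f_S(w)=\int_{\C} f(z)\,f(w-z)\dd A(z)=\frac{1}{\pi^2}\,\mathrm{Area}\bigl(\D\cap(w+\D)\bigr),
\]
which is $\tfrac1{\pi^2}$ times the area of overlap of two unit disks whose centres are a distance $|w|$ apart. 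In particular $f_S$ is radially symmetric and depends only on $v:=|w|$.

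The only genuine computation is the overlap area of two unit disks with centres at distance $v\in[0,2]$. This region is a symmetric lens; its common chord lies at distance $v/2$ from each centre, so each of the two bounding circular segments subtends a half-angle $\phi=\arccos(v/2)$ at its centre. Since a segment of half-angle $\phi$ in a unit circle has area $\phi-\sin\phi\cos\phi$ (sector minus triangle), the total overlap is
\[
A(v)=2\phi-2\sin\phi\cos\phi=2\arccos\!\left(\frac v2\right)-\frac v2\sqrt{4-v^2},
\]
using $\cos\phi=\tfrac v2$ and $\sin\phi=\tfrac{\sqrt{4-v^2}}2$.

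Finally, since $f_S$ is radial, the density of $T_1=|S|$ is recovered by integrating $f_S$ over the circle of radius $v$, i.e.\ multiplying by the circumference $2\pi v$:
\[
p_{T_1}(v)=2\pi v\,f_S(v)=\frac{2v}{\pi}\,A(v)=\frac{2v}{\pi}\left(2\arccos\!\left(\frac v2\right)-\frac v2\sqrt{4-v^2}\right).
\]
Using $4\arccos(v/2)=2\pi-4\arcsin(v/2)$ to rewrite the first term then yields $p_{T_1}(v)=\frac{v}{\pi}\bigl(2\pi-v\sqrt{4-v^2}-4\arcsin(v/2)\bigr)$, which is exactly the claimed formula; as a consistency check, $\int_0^2 p_{T_1}(v)\dd v=Z_{\D}(0,x+y)=1$, in agreement with Theorem \ref{thm:areal-zeta-x+y}.

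The step I expect to be the main obstacle is not any single algebraic manipulation but rather setting the problem up cleanly: correctly recognizing that the $\rho_1\rho_2$ weight makes $X,Y$ uniform in \emph{area} (so that the convolution reduces to a purely geometric overlap), and then carrying out the lens-area computation. Everything after that is routine simplification, and the radial symmetry of $f_S$ is precisely what collapses the two-dimensional density into the one-dimensional $p_{T_1}$.
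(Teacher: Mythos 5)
Your proof is correct, but it takes a genuinely different route from the paper's. You correctly read off that the measure relevant to $T_1$ makes $X$ and $Y$ i.i.d.\ uniform (with respect to area) on $\D$, so that the law of $X+Y$ is the self-convolution of the disk density, namely $\tfrac1{\pi^2}$ times the lens area of two unit disks at distance $v$; your segment computation $A(v)=2\arccos(v/2)-\tfrac v2\sqrt{4-v^2}$ and the passage to the radial density via the factor $2\pi v$ are both right, and the result matches the stated formula (as does the small-$v$ expansion $2v-\tfrac4\pi v^2+\cdots$ recorded in the paper). The paper instead treats $Z_{\D}(s-1,x+y)=\frac{4}{s+3}\frac{\Gamma(1+s)}{\Gamma(\frac{3+s}{2})^2}$ as the Mellin transform of $p_{T_1}$ and inverts it by summing residues at the negative integers, obtaining a power series that is then resummed into the closed form. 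Your argument is more elementary and self-contained --- it bypasses the residue computation and the (unjustified in the paper) resummation step, and your check $\int_0^2 p_{T_1}(v)\dd v=Z_\D(0,x+y)=1$ is a useful sanity test. What the Mellin-inversion method buys in exchange is uniformity: it applies verbatim when no geometric convolution picture is available, e.g.\ for the density $p_U$ of $|(x+1)(y+1)|$ in Section \ref{sec:Qk}, which is why the paper adopts it here. The one presentational quibble is that the uniform-on-the-disk identification is read off from the parametrization preceding \eqref{eq:exp} (the Jacobian factor $2\rho\dd\rho\dd\theta=\tfrac1\pi\dd A$), not from \eqref{eq:exp} itself; this does not affect the argument.
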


\begin{proof}
    Following the discussion in Remark \ref{probability}, we observe that the areal Zeta Mahler function of $x+y$ coincides with the moment generating function of $T_1,$ namely
    \[Z_{\D}(s, x+y) = \frac{4}{s+4} \frac{\Gamma(2+s)}{\Gamma\left(2+\frac{s}{2}\right)^2} =  \E [T_1^s] = \int_{0}^{\infty} v^s p_{T_1}(v) \dd v = \int_{0}^{2} v^s p_{T_1}(v) \dd v.\] 
        Further, the above equality implies that $Z_{\D}(s-1, x+y)$ is the Mellin transform of $p_{T_1}(v).$ We  can then retrieve $p_{T_1}(v)$ by considering the inverse Mellin transform of \[Z_{\D}(s-1, x+y) = \frac{4}{s+3}\frac{\Gamma(1+s)}{\Gamma\left(\frac{3+s}{2}\right)^2}.\] This can be computed by finding residues at the negative integers, and is given for $0\leq v \leq 2$ by 
        \[p_{T_1}(v) = 2v - \frac{4}{\pi}v^2 + 4\sum_{j=4}^{\infty}\frac{(-v)^{j}}{(j-3)\, \Gamma(j) \, \Gamma\left(\frac{3-j}{2}\right)^2} = \frac{v}\pi\left(2\pi-v\sqrt{4-v^2}-4\arcsin\left(\frac{v}{2}\right)\right).\]
    \end{proof}

\begin{lem}\label{densityt2t1}
    For $k\geq 0$,  $0 \leq v \leq 2$ and $|k-v|< u <  k+v,$ we have \begin{equation*}
      \PP(T_2=u\,|\,T_1=v) = p_{T_2\vert T_1}(u\vert v) = \frac{2u}{\pi\sqrt{4k^2v^2-(u^2-v^2-k^2)^2}}.
    \end{equation*}
\end{lem}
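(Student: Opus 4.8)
The plan is to reduce the computation of this conditional density to a one-dimensional change of variables, exploiting the rotational symmetry built into the random walk. First I would write $X+Y = T_1 e^{i\Phi}$ in terms of its modulus $T_1$ and argument $\Phi$, and recall that $kZ = k e^{2\pi i\theta}$ with $\theta$ uniform on $[0,1)$ and independent of $X$ and $Y$. Then $T_2 = |X+Y+kZ| = |T_1 e^{i\Phi} + k e^{2\pi i\theta}|$, and since $\theta$ is uniform and independent of the pair $(T_1,\Phi)$, the relative angle $\Psi := 2\pi\theta - \Phi \pmod{2\pi}$ is uniform on $[0,2\pi)$ and independent of $T_1$. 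Hence, conditionally on $T_1 = v$, we obtain the distributional identity $T_2 = |v + k e^{i\Psi}|$ with $\Psi$ uniform on $[0,2\pi)$.

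From here I would compute $T_2^2 = v^2 + k^2 + 2vk\cos\Psi$, so that $u^2 = v^2 + k^2 + 2vk\cos\Psi$ for $u = T_2$. As $\Psi$ ranges over $[0,\pi]$ the right-hand side decreases monotonically from $(v+k)^2$ to $(v-k)^2$, which already recovers the stated support $|k-v| < u < k+v$. The map $\Psi \mapsto u$ is two-to-one on $[0,2\pi)$ — the values $\Psi$ and $2\pi - \Psi$ yield the same $u$ — so the density of $u$ picks up a factor of $2$ relative to a single monotone branch.

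The final step is the change of variables itself. Differentiating $u^2 = v^2 + k^2 + 2vk\cos\Psi$ gives $u\dd u = -vk\sin\Psi \dd\Psi$, whence, solving for $\cos\Psi$ and using $\sin\Psi = \sqrt{1-\cos^2\Psi}$ on $[0,\pi]$,
\[
\sin\Psi = \frac{\sqrt{4k^2 v^2 - (u^2 - v^2 - k^2)^2}}{2vk}.
\]
Combining this with the uniform density $\frac{1}{2\pi}$ of $\Psi$ and the factor of $2$ from the two branches yields
\[
p_{T_2\vert T_1}(u\vert v) = 2\cdot \frac{1}{2\pi}\cdot \frac{u}{vk\sin\Psi} = \frac{2u}{\pi\sqrt{4k^2 v^2 - (u^2 - v^2 - k^2)^2}},
\]
as claimed.

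I expect the main subtlety to lie in the probabilistic reduction of the first step — namely, rigorously justifying that conditioning on $T_1 = v$ leaves the relative angle $\Psi$ uniform on $[0,2\pi)$ and independent of $v$ — rather than in the elementary calculus of the change of variables. Care is also needed to confirm that the two preimages $\Psi$ and $2\pi-\Psi$ contribute equally, which is precisely what produces the correct constant $\frac{2}{\pi}$ in the final formula.
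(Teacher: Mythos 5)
Your proposal is correct and follows essentially the same route as the paper: both arguments reduce to the observation that, conditionally on $T_1=v$, the relative angle between $X+Y$ and $kZ$ is uniform (which is exactly why the auxiliary unit-length walk $Z$ was introduced), and then perform an elementary one-dimensional computation from $u^2=v^2+k^2+2vk\cos\Psi$. The only cosmetic difference is that the paper computes the cumulative distribution function $1-\frac{1}{\pi}\arccos\bigl(\frac{u^2-v^2-k^2}{2vk}\bigr)$ and differentiates in $u$, whereas you apply the two-to-one change of variables directly to the density; your version has the mild merit of making explicit the uniformity-of-the-relative-angle step that the paper leaves implicit.
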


\begin{proof}
    We start by computing the cumulative distribution function of $T_2$ at $T_1 = v$ given by \[f_{v}(u) =  \PP(T_2 \leq u\,|\,T_1=v).\] We can then compute $p_{T_2\vert T_1}(u\vert v)$ as the  derivative of $f_{v}(u)$ with respect to $u$. 

    Writing $Z = e^{2\pi i \theta},$  where we now think of $\theta$ as uniformly distributed in $[-1/2, 1/2),$ we have \[f_{v}(u) = \PP(T_2 \leq u\,|\,T_1=v) = \PP(|ke^{2i\pi \theta} + v|\leq u\,|\,T_1=v) = \int_{[-1/2, 1/2) \cap I_{u, v}} \dd \theta,\] 
    where $I_{u, v}$ is the set of values of $\theta$ such that \[
T_2^2=|ke^{2i \pi \theta} + v|^2 = v^2+k^2+2vk\cos(2\pi \theta) \leq u^2 \Longleftrightarrow  \cos(2\pi \theta) \leq \frac{u^2-v^2-k^2}{2vk}.
\] Since $v \in [0, 2]$ and $u \in (|k-v|, k+v),$ the quantity  $\beta = \frac1{2\pi}\arccos\left(\frac{u^2-v^2-k^2}{2vk}\right) \in \left[0, \frac{1}{2}\right)$ is well-defined and 
$I_{u, v}$ is a non-empty set.  Then \[f_{v}(u) = \int_{[-1/2, 1/2) \cap I_{u, v}} \dd \theta = 1 - \int_{-\beta}^{\beta} \dd \theta = 1 - 2\beta = 1 - \frac{1}{\pi}\arccos\left(\frac{u^2-v^2-k^2}{2vk}\right).\] Taking the derivative with respect to $u,$ we obtain the required expression.
\end{proof}

 We remark that Lemmas \ref{densityt1} and \ref{densityt2t1} are derived independently of the assumption $k> 2$ or $0\leq k \leq 2.$

We are now ready to compute the integral in \eqref{eq:exp}. When $k > 2,$ evaluation of the integral is a consequence of the following two crucial lemmas.

\begin{lem}
\label{lem:Lem1}
    For $\re(s) > 0$, $k > 2$, and $0\leq v \leq 2$, we have
\begin{equation*}
        \int_{k-v}^{k+v} u^s \cdot  \PP(T_2=u\,|\,T_1=v) \dd u = k^s \cdot 
 \pFq{2}{1}{-\frac{s}{2},-\frac{s}{2}}{1}{\frac{v^2}{k^2}}.
    \end{equation*}
\end{lem}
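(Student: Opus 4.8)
The plan is to collapse the stated integral to an angular average of $|ke^{2\pi i\theta}+v|^s$ and then recognize that average as a value of the classical Zeta Mahler function handled by Theorem \ref{thm:Akatsuka}. Conceptually this is forced: the conditional density $p_{T_2\vert T_1}(u\vert v)$ built in Lemma \ref{densityt2t1} is, by construction, the law of $T_2=|ke^{2\pi i\theta}+v|$ for $\theta$ uniform, so the left-hand side of Lemma \ref{lem:Lem1} is nothing but the conditional moment $\E[T_2^s\mid T_1=v]=\int_0^1|ke^{2\pi i\theta}+v|^s\dd\theta$ discussed in Remark \ref{probability}. I would make this explicit by simply reversing the change of variables used to produce the density.

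Concretely, first I would substitute the explicit density from Lemma \ref{densityt2t1} and pass to the variable $\phi\in[0,\pi]$ via $u^2=v^2+k^2+2kv\cos\phi$. Under this substitution one gets $4k^2v^2-(u^2-v^2-k^2)^2=4k^2v^2\sin^2\phi$, so the Jacobian cancels the square root exactly and the integral collapses to $\frac1\pi\int_0^\pi(v^2+k^2+2kv\cos\phi)^{s/2}\dd\phi$. Since $v^2+k^2+2kv\cos\phi=|ke^{i\phi}+v|^2$, using evenness in $\phi$ and the rescaling $\phi=2\pi\theta$ turns this into $\int_0^1|ke^{2\pi i\theta}+v|^s\dd\theta$. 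The endpoints $u=k\pm v$ correspond to $\phi=\pi,0$, and because $k>2\geq v$ we have $k-v>0$, so $u$ stays bounded away from $0$ and the integrand is integrable across the (integrable, square-root) endpoint singularities of the density; the hypothesis $\re(s)>0$ plays no essential role here.

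Second, I would factor out $k^s$ using $k>v$, writing $|ke^{2\pi i\theta}+v|^s=k^s|e^{2\pi i\theta}+v/k|^s$, and observe that $\int_0^1|e^{2\pi i\theta}+v/k|^s\dd\theta=Z(s,x+v/k)$. Since $|v/k|<1$ — and this is precisely where the hypothesis $k>2\geq v$ is used — Theorem \ref{thm:Akatsuka} in its $|a|<1$ branch gives $Z(s,x+v/k)=\pFq{2}{1}{-\frac s2,-\frac s2}{1}{v^2/k^2}$. Multiplying back by $k^s$ yields the claimed right-hand side $k^s\cdot\pFq{2}{1}{-\frac s2,-\frac s2}{1}{v^2/k^2}$.

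The computation is short, so the only real care is in the bookkeeping of the substitution: choosing $u^2=v^2+k^2+2kv\cos\phi$ so that it linearizes the square root, and tracking the endpoint correspondence and orientation. The substitution both reverses the limits of integration and contributes a negative Jacobian, and these two sign flips must cancel to produce the correct positive answer; I would verify this explicitly to avoid a spurious sign. The other point to flag is that we must land in the $|a|<1$ case of Theorem \ref{thm:Akatsuka}, which is guaranteed by $k>2$ and fails to be available for general $k$ — foreshadowing why the analogous computation for $0\leq k\leq 2$ requires the separate machinery developed later.
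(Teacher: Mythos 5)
Your proof is correct, and it takes a genuinely different route from the paper's. The paper works directly from the density: it substitutes $w=u^2$, normalizes the interval to $[0,1]$, recognizes the Euler integral representation of Theorem \ref{thm:hyperint} to obtain the intermediate form $(k-v)^s\cdot\pFq{2}{1}{-\frac s2,\frac12}{1}{\frac{-4kv}{(k-v)^2}}$, and then applies a quadratic transformation of the Gauss hypergeometric function to reach $k^s\cdot\pFq{2}{1}{-\frac s2,-\frac s2}{1}{\frac{v^2}{k^2}}$. You instead undo the construction of the conditional density via $u^2=v^2+k^2+2kv\cos\phi$, identify the left-hand side as the conditional moment $\E[T_2^s\mid T_1=v]=\int_0^1|ke^{2\pi i\theta}+v|^s\dd\theta=k^s\,Z(s,x+v/k)$, and invoke the $|a|<1$ branch of Theorem \ref{thm:Akatsuka}. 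Your route is shorter and conceptually cleaner -- it makes visible that the lemma is just Akatsuka's formula in disguise, consistent with how the paper itself uses Theorem \ref{thm:Akatsuka} inside the proof of Theorem \ref{thm:areal-zeta-x+y} -- though the quadratic-transformation content has not disappeared, it is merely absorbed into the cited theorem. The paper's explicit intermediate expression has the mild advantage of exhibiting the integrand whose analytic continuation in $k$ is then dissected in Lemma \ref{Lem2} for the regime $0\le k\le 2$, which your reduction to $Z(s,x+v/k)$ would not directly supply since $v/k$ exceeds $1$ there. Your side remarks are also accurate: the orientation reversal and the negative Jacobian cancel, and for $k>2$ the hypothesis $\re(s)>0$ is not needed for convergence since $u$ stays bounded away from $0$ and the endpoint singularities of the density are integrable.
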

\begin{proof}
Since $k > 2,$ we have $k > v.$ By Lemma \ref{densityt2t1}, we have
\[ \int_{k-v}^{k+v} u^s \cdot  \PP(T_2=u\,|\,T_1=v) \dd u= \frac{2}{\pi}\int_{k-v}^{k+v} u^s \frac{u}{\sqrt{4k^2v^2-(u^2-v^2-k^2)^2}} \dd u.\]
Setting $w=u^2$, the above integral equals
\begin{equation}\label{tag}
    \frac{1}{\pi}\int_{(k-v)^2}^{(k+v)^2}\frac{w^{\frac{s}{2}}}{\sqrt{4k^2v^2-(w-v^2-k^2)^2}} \dd w.
\end{equation}
After the change of variables
\[
w'=\frac{w-(k-v)^2}{(k+v)^2-(k-v)^2}=\frac{w-(k-v)^2}{4kv},
\]
\eqref{tag} becomes 
\[
\frac{1}{\pi}\int_0^1\frac{(4kvw' +(k-v)^2)^{\frac{s}{2}}}{\sqrt{w'(1-w')}} \dd w'=(k-v)^s\cdot
 \pFq{2}{1}{-\frac{s}{2},\frac{1}{2}}{1}{\frac{-4kv}{(k-v)^2}}.
\]
The last equality and the convergence for $\re(s)>0$ follow from the integral representation of the hypergeometric function given in Theorem \ref{thm:hyperint}. 

Applying the quadratic transformation for the hypergeometric function described in \cite{wolfram-hyperquad}, we find, for $k > 2 \geq v,$
\[(k-v)^s\cdot
 \pFq{2}{1}{-\frac{s}{2},\frac{1}{2}}{1}{\frac{-4kv}{(k-v)^2}}= k^s\cdot \pFq{2}{1}{-\frac{s}{2},-\frac{s}{2}}{1}{\frac{v^2}{k^2}}.\]
This concludes the proof.
\end{proof}

We need the next lemma to completely evaluate the integral in \eqref{eq:exp} for $k > 2.$
\begin{lem}
\label{Lem3}
For $\re(s) > 0$ and $k>2$, we have 
\[\int_0^2 p_{T_1}(v) \cdot 
 \pFq{2}{1}{-\frac{s}{2},-\frac{s}{2}}{1}{\frac{v^2}{k^2}} \dd v = \pFq{3}{2}{-\frac{s}{2},-\frac{s}{2}, \frac{3}{2}}{2,3}{\frac{4}{k^2}}.\]
\end{lem}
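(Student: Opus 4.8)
The plan is to expand the Gauss hypergeometric factor as a power series in $v^2/k^2$ and integrate term by term, thereby reducing the problem to computing the even moments of $p_{T_1}$, which are in turn encoded by the areal Zeta Mahler function of $x+y$ from Theorem~\ref{thm:areal-zeta-x+y}. Writing $z = 4/k^2$, note that $0 < z < 1$ since $k > 2$, and the series
\[
\pFq{2}{1}{-\frac{s}{2},-\frac{s}{2}}{1}{\frac{v^2}{k^2}} = \sum_{n=0}^\infty \frac{\left(-\frac{s}{2}\right)_n^2}{(1)_n\, n!}\left(\frac{v^2}{k^2}\right)^n
\]
converges absolutely and uniformly for $v\in[0,2]$, because $\sup_{v\in[0,2]} v^2/k^2 = 4/k^2 < 1$. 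Since $p_{T_1}$ is continuous, hence bounded, on the compact interval $[0,2]$ by Lemma~\ref{densityt1}, I would interchange summation and integration to obtain
\[
\int_0^2 p_{T_1}(v)\, \pFq{2}{1}{-\frac{s}{2},-\frac{s}{2}}{1}{\frac{v^2}{k^2}} \dd v = \sum_{n=0}^\infty \frac{\left(-\frac{s}{2}\right)_n^2}{(1)_n\, n!}\,\frac{M_n}{k^{2n}}, \qquad M_n := \int_0^2 v^{2n} p_{T_1}(v) \dd v .
\]

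The next step is to evaluate the moments $M_n$. By Lemma~\ref{densityt1} and Theorem~\ref{thm:areal-zeta-x+y}, the function $Z_\D(s,x+y) = \int_0^2 v^s p_{T_1}(v)\dd v$ equals $\tfrac{4}{s+4}\tfrac{\Gamma(2+s)}{\Gamma(2+s/2)^2}$. Specializing to $s = 2n$, which is permissible since $2n > -2$, yields the closed form
\[
M_n = \frac{4}{2n+4}\cdot\frac{\Gamma(2+2n)}{\Gamma(2+n)^2} = \frac{2}{n+2}\cdot\frac{(2n+1)!}{\bigl((n+1)!\bigr)^2}.
\]

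Substituting this back and collecting the factor $z^n = (4/k^2)^n$, the integral takes the form $\sum_{n\ge 0}\left(-\frac{s}{2}\right)_n^2\, c_n\, z^n$ with explicit rational $c_n$, and it then remains only to match these against the coefficients of the target ${}_3F_2$, that is, to verify
\[
\frac{M_n}{(1)_n\, n!\, 4^n} = \frac{\left(\frac{3}{2}\right)_n}{(2)_n(3)_n\, n!}.
\]
This term-by-term identity is the one real obstacle, and I expect to resolve it using the Legendre duplication formula for the Gamma function, which gives $(2n+1)! = 4^n\, n!\,\left(\frac{3}{2}\right)_n$; combined with $(1)_n = n!$, $(2)_n = (n+1)!$, and $(3)_n = (n+2)!/2$, both sides collapse to $\tfrac{2\left(\frac{3}{2}\right)_n}{(n+1)^2(n+2)(n!)^3}$, which finishes the proof. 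The only subtlety is careful bookkeeping of the Pochhammer symbols and factorials; no genuine analytic difficulty arises beyond the interchange of summation and integration, which is immediate from the uniform convergence noted above.
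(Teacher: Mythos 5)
Your proposal is correct and follows essentially the same route as the paper: expand the Gauss hypergeometric series, interchange sum and integral, identify the even moments $\int_0^2 v^{2n}p_{T_1}(v)\dd v$ with $Z_{\D}(2n,x+y)$ via Theorem \ref{thm:areal-zeta-x+y}, and match Pochhammer coefficients against the ${}_3F_2$. Your term-by-term identity checks out (both sides reduce to $\tfrac{2(3/2)_n}{(n+1)^2(n+2)(n!)^3}$), and you supply slightly more justification for the interchange than the paper does.
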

\begin{proof}
    We expand the hypergeometric function into its series and interchange the integral and the sum. In other words, we have
    \begin{align*}
        \int_0^2 p_{T_1}(v) \cdot 
        \pFq{2}{1}{-\frac{s}{2},-\frac{s}{2}}{1}{\frac{v^2}{k^2}} \dd v  =&
    \sum_{n = 0}^\infty \frac{\left( -\frac{s}{2}\right)^2_n}{n!^2} \frac{1}{k^{2n}}\int_0^2 v^{2n} p_{T_1}(v) \dd v \\ =& \sum_{n = 0}^\infty \frac{\left( -\frac{s}{2}\right)^2_n}{n!^2} \frac{1}{k^{2n}} \frac{4^n\left( \frac{3}{2}\right)_n}{(n+1) \left( 3 \right)_n},
    \end{align*} where the last equality follows from  the discussion in Remark \ref{probability} and Theorem \ref{thm:areal-zeta-x+y}, since \[\int_0^2 v^{2n} p_{T_1}(v) \dd v = Z_{\D}(2n, x+y) = \frac{4}{2n+4}\frac{\Gamma(2n+2)}{\Gamma(n+2)^2}=  \frac{4^n\left( \frac{3}{2}\right)_n}{(n+1) \left( 3 \right)_n}.\]
Furthermore, \[\sum_{n = 0}^\infty \frac{\left( -\frac{s}{2}\right)^2_n}{n!^2} \frac{1}{k^{2n}} \frac{4^n\left( \frac{3}{2}\right)_n}{(n+1) \left( 3 \right)_n} = \sum_{n=0}^{\infty}\frac{\left( -\frac{s}{2}\right)^2_n \left( \frac{3}{2}\right)_n}{(3)_n (2)_n}\frac{\left(\frac{4}{k^2}\right)^n}{n!}\] coincides with the series representation of $
\pFq{3}{2}{-\frac{s}{2},-\frac{s}{2},\frac{3}{2}}{2,3}{\frac{4}{k^2}},$ and this concludes the proof.
\end{proof}

Combining Lemmas \ref{lem:Lem1} and \ref{Lem3}, we have, for $\re(s) > 0$ and $k > 2$,  \begin{equation}\label{k>2}
    Z_{\D}(s, x+y+k) = k^s\cdot \pFq{3}{2}{-\frac{s}{2},-\frac{s}{2},\frac{3}{2}}{2,3}{\frac{4}{k^2}}.
\end{equation}

We now extend the above result for $k \leq 2.$ Notice that in this case the boundary points of the inner integral in \eqref{eq:exp} will be $|k-v|$ and $k+v,$ since for $0 \leq v \leq 2,$ there exists at least one $v$ such that $k \leq v.$ 
Denote \begin{equation}\label{F(k)}
   \mathcal{F}(k):=  k^s \cdot 
    \pFq{2}{1}{-\frac{s}{2},-\frac{s}{2}}{1}{\frac{v^2}{k^2}}.
\end{equation} Now, to evaluate the integral in \eqref{eq:exp} with boundary points $|k-v|$ and $k+v,$ we have the following lemma.

\begin{lem}
\label{Lem2}
    Let $s$ be a real positive number that is not an odd integer. Then, for $k \leq 2,$ we have
\begin{equation*}
        \int_{|k-v|}^{k+v}  u^s \cdot \PP(T_2=u\,|\,T_1=v)  \dd u = \re \left(\mathcal{F}(k)\right) - \cot\left( \frac{\pi s}{2}\right) \im \left(\mathcal F(k)\right).
    \end{equation*}
Here $\mathcal{F}(k) := \lim_{\varepsilon \to 0^{+}} \mathcal{F}(k + i \varepsilon),$ where we use the analytic continuation of $\mathcal{F}(z)$ to the complex upper half-plane. 

\end{lem}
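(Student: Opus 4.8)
The plan is to run the same reduction as in the proof of Lemma~\ref{lem:Lem1} and then to pay careful attention to the branch of the resulting hypergeometric function, which is now evaluated on its cut. Substituting $w=u^2$ followed by $w'=\frac{w-(k-v)^2}{4kv}$ turns the left-hand side into
\[
\frac{1}{\pi}\int_0^1\frac{\bigl(4kvw'+(k-v)^2\bigr)^{s/2}}{\sqrt{w'(1-w')}}\dd w'=|k-v|^{s}\,\pFq{2}{1}{-\frac s2,\frac12}{1}{\frac{-4kv}{(k-v)^2}},
\]
where the evaluation uses Theorem~\ref{thm:hyperint} and the only change from Lemma~\ref{lem:Lem1} is that $\bigl((k-v)^2\bigr)^{s/2}=|k-v|^{s}$ in place of $(k-v)^{s}$. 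The argument $\frac{-4kv}{(k-v)^2}<0$ stays off the cut $[1,\infty)$, so this quantity is real. For $v\le k$ it coincides with Lemma~\ref{lem:Lem1} and equals the real number $\mathcal F(k)$, so the whole content of the statement lies in the range $v>k$, where $\frac{v^2}{k^2}>1$ lands on the branch cut of the Gauss hypergeometric function appearing in $\mathcal F$.

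Next I would analyse the multivaluedness of $\mathcal F(z)=z^{s}\,\pFq{2}{1}{-\frac s2,-\frac s2}{1}{\frac{v^2}{z^2}}$ near its real branch point $z=v$. Using the standard connection formula for the Gauss hypergeometric function about $\zeta=1$ (see \cite[\S15.8]{dlmf}), valid because $c-a-b=1+s\notin\Z$, together with $1-\frac{v^2}{z^2}=\frac{(z-v)(z+v)}{z^2}$, one obtains a decomposition
\[
\mathcal F(z)=A(z)+B(z)\,(z-v)^{1+s},
\]
where $A$ and $B$ are single-valued and analytic at $z=v$ and real on the real axis; concretely $A,B$ are $z^{s}$ times the two Gauss hypergeometric series in $1-\frac{v^2}{z^2}$, with real connection coefficients. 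Since $\mathcal F$ agrees with the real integral for real $z>v$, the two expansions share the same $A$ and $B$. For real $z=k<v$ the integral equals $A(k)+B(k)(v-k)^{1+s}$, while continuing $\mathcal F$ to $z=k+i\varepsilon$ with $\varepsilon\to0^{+}$ rotates $\arg(z-v)$ from $0$ to $\pi$; as $A,B$ are single-valued, only the factor $(z-v)^{1+s}$ is affected, picking up $e^{i\pi(1+s)}$, so that
\[
\mathcal F(k)=A(k)+B(k)(v-k)^{1+s}\,e^{i\pi(1+s)}.
\]

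Writing $P=A(k)$ and $Q=B(k)(v-k)^{1+s}$, both real, and using $e^{i\pi(1+s)}=-e^{i\pi s}$, this gives $\re\mathcal F(k)=P-Q\cos\pi s$ and $\im\mathcal F(k)=-Q\sin\pi s$, whereas the integral equals $P+Q$. Solving for $P+Q$ and applying the half-angle identity $\frac{1+\cos\pi s}{\sin\pi s}=\cot\!\bigl(\frac{\pi s}{2}\bigr)$ yields
\[
P+Q=\re\mathcal F(k)-\frac{1+\cos\pi s}{\sin\pi s}\,\im\mathcal F(k)=\re\mathcal F(k)-\cot\!\Bigl(\frac{\pi s}{2}\Bigr)\,\im\mathcal F(k),
\]
which is the claimed formula; equivalently one may use Schwarz reflection $\mathcal F(k-i0)=\overline{\mathcal F(k+i0)}$ to isolate $P$ and $Q$ from the two boundary values.

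The principal obstacle is the middle step: justifying that only the $(z-v)^{1+s}$ branch acquires a phase — i.e.\ that $A$ and $B$ are genuinely single-valued at $z=v$ — and fixing the orientation of the rotation of $\arg(z-v)$ (hence the factor $e^{i\pi(1+s)}$ rather than its conjugate) from the prescription $z=k+i\varepsilon$. This is exactly where the hypothesis that $s$ is not an odd integer is used: odd integers are the resonant case $c-a-b=1+s\in\Z$ in which logarithmic terms enter the expansion and in which $\cot\!\bigl(\frac{\pi s}{2}\bigr)\im\mathcal F(k)$ becomes an indeterminate $0\cdot 0$, while for even integers $s$ the series $\pFq{2}{1}{-\frac s2,-\frac s2}{1}{\cdot}$ terminates, forcing $B\equiv 0$ and making the identity trivial.
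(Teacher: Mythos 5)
Your argument takes a genuinely different route from the paper's. The paper never invokes connection formulas: it identifies $\mathcal F(k)$ with the integral $\frac{2}{\pi}\int_{k-v}^{k+v}u^{s+1}\,\dd u/\sqrt{4k^2v^2-(u^2-v^2-k^2)^2}$, taken with principal branches, as a boundary value from the upper half-plane, splits the range into $[v-k,k+v]$ (where the integrand is real) and $[k-v,v-k]$ (where it is purely imaginary), folds the latter piece by $u\mapsto-u$ to produce a factor $1-e^{\pi i s}$, and then solves the resulting $2\times2$ system together with its complex conjugate. Your first paragraph is correct, the orientation of the phase $e^{i\pi(1+s)}$ is right for continuation through the upper half-plane, and your closing algebra with the half-angle identity is correct.

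The genuine gap is the sentence ``Since $\mathcal F$ agrees with the real integral for real $z>v$, the two expansions share the same $A$ and $B$,'' followed by the bare assertion that for $k<v$ the integral equals $A(k)+B(k)(v-k)^{1+s}$. Matching $\mathcal F$ with the integral on $z>v$ determines $A$ and $B$ only as germs on that side; a priori the real integral on $k<v$ could equal $A(k)+cB(k)(v-k)^{1+s}$ for some other real constant $c$, or fail to admit such a two-sided expansion at all, and any $c\neq1$ would change the final formula. This identification is essentially the entire content of the lemma and must be proved, not asserted. It can be repaired from the representation you already derived: apply the connection formula at $\zeta=\infty$ (see \cite[\S 15.8]{dlmf}) to $|k-v|^{s}\,\pFq{2}{1}{-\frac{s}{2},\frac{1}{2}}{1}{\frac{-4kv}{(k-v)^2}}$. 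The relevant parameter difference is $-\frac{s}{2}-\frac{1}{2}=-\frac{s+1}{2}$, which lies in $\Z$ exactly when $s$ is an odd integer; for $s$ not an odd integer the formula gives the integral as $\tilde A(k)+\tilde B(k)\,|k-v|^{1+s}$ with $\tilde A,\tilde B$ real and analytic in a full two-sided neighbourhood of $k=v$ (they are $(4kv)^{s/2}$, resp.\ $(4kv)^{-1/2}$, times power series in $\frac{(k-v)^2}{4kv}$). Comparing with $A(k)+B(k)(k-v)^{1+s}$ on $k>v$ and using $1+s\notin\Z$ forces $\tilde A=A$ and $\tilde B=B$ as analytic functions near $v$, hence also for $k<v$; this supplies the missing step (locally near $k=v$, with the extension to all of $(0,v)$ by real-analytic continuation). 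Note that this also corrects your account of where the hypothesis enters: the condition $c-a-b=1+s\notin\Z$ that you cite excludes all integers $s$, not just the odd ones, whereas the exclusion of precisely the odd integers is what the connection formula at infinity for $\pFq{2}{1}{-\frac{s}{2},\frac{1}{2}}{1}{\cdot}$ requires.
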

\begin{proof}
    For $k \geq v$, the result follows as in  Lemma \ref{lem:Lem1}, since the integral is real, i.e. $\im(\mathcal{F}(k)) = 0.$ Assume that $k < v$.  We first analyze the integral 
    \[
    \int_{k-v}^{k+v} u^s \frac{u}{\sqrt{4k^2v^2-(u^2-v^2-k^2)^2}} \dd u,
    \]
    where $\sqrt{\cdot}$ indicates the principal branch of the square root and $u^s = e^{s \mathrm{Log}(u)}$, where $\mathrm{Log}$ is the principal branch of the logarithm.
    In the range $k - v \leq u \leq k + v$, the expression inside the square root $4 k^2 v^2 - (u^2 - v^2 - k^2)^2 = -(k - u - v) (k + u - v) (k - u + v) (k + u + v)$ becomes negative whenever $k - v < u < v - k$. By the choice of branch cut for the square root, it follows that 
    \[
    \lim_{\varepsilon \to 0^{+}} \int_{k + i \varepsilon-v}^{k + i \varepsilon+v} u^s \frac{u}{\sqrt{4(k + i \varepsilon)^2v^2-(u^2-v^2-(k + i \varepsilon)^2)^2}} \dd u = \int_{k - v}^{k +v} u^s \frac{u}{\sqrt{4k^2v^2-(u^2-v^2-k^2)^2}} \dd u.
    \]
    By the analytic continuation of $\mathcal{F}(z)$   to the complex upper half-plane given in \eqref{F(k)}, we can write
    \[
    \mathcal{F}(k) := \lim_{\varepsilon \to 0^{+}} \mathcal{F}(k + i \varepsilon) = \frac{2}{\pi} \int_{k - v}^{k +v} u^s \frac{u}{\sqrt{4k^2v^2-(u^2-v^2-k^2)^2}} \dd u.
    \]
Thus, we have,
    \begin{align}
        \mathcal{F}(k) =&\frac{2}{\pi}\int_{k-v}^{k+v} u^s \cdot\frac{ u \dd u}{\sqrt{4k^2v^2-(v^2-u^2-k^2)^2}}  \nonumber \\ =&\frac{2}{\pi} \int_{v - k}^{k+v}   u^s \cdot\frac{ u \dd u}{\sqrt{4k^2v^2-(v^2-u^2-k^2)^2}}  + \frac{2}{\pi} \int_{0}^{v - k}   u^s \cdot\frac{ u \dd u}{\sqrt{4k^2v^2-(v^2-u^2-k^2)^2}}  \nonumber \\ &+\frac{2}{\pi}  \int_{k - v}^{0}  u^s \cdot\frac{ u \dd u}{\sqrt{4k^2v^2-(v^2-u^2-k^2)^2}}  \label{covneeded} \\ =& \frac{2}{\pi} \int_{v-k}^{k+v}   \frac{ u^{s+1} \dd u}{\sqrt{4k^2v^2-(v^2-u^2-k^2)^2}}  +\frac{2}{\pi}  (1 - e^{\pi i s}) \int_0^{v - k}   \frac{ u^{s+1} \dd u}{\sqrt{4k^2v^2-(v^2-u^2-k^2)^2}} \label{intforK<1}.
    \end{align}
The last equality follows from applying the change of variables $u \to -u$ to the third integral in \eqref{covneeded}. Observe that in \eqref{covneeded}, the integrand is real on the interval $[v-k,k+v],$ while it is purely imaginary on the intervals $[0, v-k]$ and $[k-v, 0].$ Note that we need $s$ to be real to express $(-1)^s$ as $e^{\pi i s}$ in \eqref{intforK<1}. Next, taking the complex conjugate of \eqref{intforK<1}, we have
\[\overline{\mathcal{F}(k)} = \frac{2}{\pi} \int_{v-k}^{k+v} \frac{ u^{s+1} \dd u}{\sqrt{4k^2v^2-(v^2-u^2-k^2)^2}} - \frac{2}{\pi}  (1 - e^{-\pi i s})\int_0^{v-k} \frac{u^{s+1} \dd u}{\sqrt{4k^2v^2-(v^2-u^2-k^2)^2}}.\]
Combining the above equality with \eqref{intforK<1}, we finally have
\begin{align*}
   \frac{2}{\pi}  \int_{v - k}^{k+v} \frac{ u^{s+1}}{\sqrt{4k^2v^2-(v^2-u^2-k^2)^2}} \dd u =& \frac{\mathcal{F}(k) - e^{\pi i s} \overline{\mathcal{F}(k)}}{1 - e^{\pi i s}} \\ =& \re \left(\mathcal{F}(k)\right) - \cot\left( \frac{\pi s}{2}\right) \im \left(\mathcal{F}(k)\right).
\end{align*} \end{proof}
Now integrating $p_{T_1}(v)\cdot  \mathcal{F}(k)$ as in Lemma \ref{Lem3} and considering the real and imaginary parts, we reach the following result.
\begin{prop}
\label{Thm1}
    For real $s > 0$, not an odd integer, and $k>0$,
    \begin{equation}\label{k<2}
        Z_{\D}(s,x+y+k) = \re \left(\mathcal{G}(k)\right) - \cot \left( \frac{\pi s}{2}\right) \im \left(\mathcal{G}(k)\right),
    \end{equation}
    where, for $k>2$,  \[\mathcal{G}(k) := k^s \cdot 
    \pFq{3}{2}{-\frac{s}{2},-\frac{s}{2},\frac{3}{2}}{2,3}{\frac{4}{k^2}}\]
 and for $0<k\leq 2$,  \[\mathcal{G}(k) := \lim_{\varepsilon \to 0^{+}} \mathcal{G}(k + i \varepsilon),\] where we use the analytic continuation of $\mathcal{G}(z)$ to the complex upper half-plane. 
\end{prop}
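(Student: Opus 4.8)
The plan is to treat the two ranges of $k$ separately and in both cases to reduce the statement to the integral identity over $v\in[0,2]$ already established earlier. For $k>2$ there is nothing to do: equation \eqref{k>2} gives $Z_{\D}(s,x+y+k)=k^s\cdot\pFq{3}{2}{-\frac{s}{2},-\frac{s}{2},\frac{3}{2}}{2,3}{\frac{4}{k^2}}=\mathcal{G}(k)$, and for real $s$ this is a real quantity (all parameters and the argument $4/k^2<1$ being real), so $\re(\mathcal{G}(k))=\mathcal{G}(k)$ and $\im(\mathcal{G}(k))=0$, and \eqref{k<2} holds trivially. The substance of the proposition is therefore the range $0<k\le2$.

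For $0<k\le2$ I would start from \eqref{eq:exp}, now with inner limits of integration $|k-v|$ and $k+v$, and substitute the value of the inner integral supplied by Lemma \ref{Lem2}. Writing $\mathcal{F}(k)=\mathcal{F}(k;v)$ for the $v$-dependent quantity \eqref{F(k)}, this gives
\begin{equation*}
Z_{\D}(s,x+y+k)=\int_0^2\Big(\re\big(\mathcal{F}(k;v)\big)-\cot\big(\tfrac{\pi s}{2}\big)\,\im\big(\mathcal{F}(k;v)\big)\Big)\,p_{T_1}(v)\dd v.
\end{equation*}
Since $p_{T_1}(v)$ is real-valued on $[0,2]$ and both $\re$ and $\im$ are real-linear operations, I can pull them through the $v$-integral to obtain
\begin{equation*}
Z_{\D}(s,x+y+k)=\re\!\left(\int_0^2\mathcal{F}(k;v)\,p_{T_1}(v)\dd v\right)-\cot\big(\tfrac{\pi s}{2}\big)\,\im\!\left(\int_0^2\mathcal{F}(k;v)\,p_{T_1}(v)\dd v\right).
\end{equation*}
It then remains to prove that $\int_0^2\mathcal{F}(k;v)\,p_{T_1}(v)\dd v=\mathcal{G}(k)$, the latter being the boundary value of the continuation from the upper half-plane.

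This identification comes from Lemma \ref{Lem3}. For $k>2$ that lemma, after multiplying through by $k^s$, reads exactly $\int_0^2\mathcal{F}(k;v)\,p_{T_1}(v)\dd v=\mathcal{G}(k)$. I would then promote $k$ to a complex variable $z$ ranging over the open upper half-plane. For such $z$ the equation $v^2/z^2=1$ has no solution with real $v\in[0,2]$, so the argument of the ${}_2F_1$ in $\mathcal{F}(z;v)$ stays a positive distance from the branch point $w=1$; hence $(z,v)\mapsto\mathcal{F}(z;v)$ is jointly continuous and holomorphic in $z$, and differentiation under the integral sign (equivalently Morera's theorem) shows that $z\mapsto\int_0^2\mathcal{F}(z;v)\,p_{T_1}(v)\dd v$ is holomorphic on the upper half-plane. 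Likewise $\mathcal{G}(z)=z^s\,\pFq{3}{2}{-\frac{s}{2},-\frac{s}{2},\frac{3}{2}}{2,3}{\frac{4}{z^2}}$ is holomorphic there. The two holomorphic functions agree on the ray $z=k>2$, so by the identity theorem they coincide throughout the upper half-plane, and letting $z=k+i\varepsilon\to k$ with $\varepsilon\to0^{+}$ yields the desired equality for $0<k\le2$.

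I expect the genuinely delicate point to be this last passage $\varepsilon\to0^{+}$ on the left-hand side: as $z=k+i\varepsilon\to k$ the argument $v^2/z^2$ crosses the branch point $w=1$ at $v=k$, where $\mathcal{F}(z;v)$ acquires a real imaginary part, so the limit and the $v$-integral must be interchanged across an emerging singularity. Making this rigorous requires a dominated-convergence estimate controlling the integrable singularity of the ${}_2F_1$ factor as its argument approaches $1$, uniformly for small $\varepsilon$, which is what ultimately guarantees that $\int_0^2\mathcal{F}(k+i\varepsilon;v)\,p_{T_1}(v)\dd v\to\int_0^2\mathcal{F}(k;v)\,p_{T_1}(v)\dd v$. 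The hypotheses that $s$ is real and not an odd integer enter precisely here and through Lemma \ref{Lem2}: they ensure the finiteness of $\cot(\tfrac{\pi s}{2})$ and the clean real/imaginary splitting of the inner integral on which the entire argument rests.
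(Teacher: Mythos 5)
Your proposal is correct and follows essentially the same route as the paper: combine Lemma \ref{Lem2} for the inner integral with the outer integration against the real density $p_{T_1}(v)$, and identify $\int_0^2\mathcal{F}(z;v)\,p_{T_1}(v)\dd v$ with $\mathcal{G}(z)$ by extending the computation of Lemma \ref{Lem3} from $k>2$ to the upper half-plane before taking real and imaginary parts of the boundary value. The paper states this in one line, while you carry out the continuation via the identity theorem and explicitly flag the interchange of the limit $\varepsilon\to0^{+}$ with the $v$-integral, which is a harmless refinement (and in fact mild, since $\re(1+s)>0$ keeps the ${}_2F_1$ bounded near its branch point).
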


A consequence of Proposition \ref{Thm1} is that $Z_{\D}(s,  x + y+k)$ satisfies the same differential equation for both $k\leq 2$ and $k>2,$ as both involve the same hypergeometric function $\mathcal{G}(k)$ in their expressions (compare equation \eqref{k>2} with equation \eqref{k<2}). For $z = \frac{k^2}{4},$ define \begin{equation*}
    \tilde{\mathcal{G}}(z) := \mathcal{G}(\sqrt{4z}) = (4z)^{\frac{s}{2}}\cdot 
    \pFq{3}{2}{-\frac{s}{2},-\frac{s}{2},\frac{3}{2}}{2,3}{\frac{1}{z}}.
\end{equation*}
Our goal is to find a differential equation satisfied by $\tilde{\mathcal{G}}(z)$ and a particular solution $H(z) = H\left(\frac{k^2}{4}\right)$ to the differential equation which coincides with $\mathcal{G}(k)$ when $k > 2$ and $s > 0.$ Then, by the analytic properties of $H\left(\frac{k^2}{4}\right),$ we will conclude that $Z_{\D}(s, x+y+k) = H\left(\frac{k^2}{4}\right)$ for all $k \in \C$ and for a larger region of $s \in \C.$

 In order to obtain a differential equation satisfied by $\tilde{\mathcal{G}},$ we start with a hypergeometric differential equation satisfied by $\pFq{3}{2}{-\frac{s}{2},-\frac{s}{2},\frac{3}{2}}{2,3}{z}.$ Let $\theta := z\frac{\dd}{\dd z}.$ Then, replacing $p =3, q=2, a_1 = a_2 = -\frac{s}{2}, a_3 = \frac{3}{2}, b_1 = 2,$ and $b_3 = 3$ in \eqref{diffeq}, we obtain the third order differential equation \begin{align*}
    \left[z\left(\theta - \frac{s}{2}\right)\left(\theta - \frac{s}{2}\right)\left(\theta + \frac{3}{2}\right) - \theta\left(\theta + 1\right)\left(\theta + 2\right)\right]\mathcal{W}(z) =& 0 \\ \Longleftrightarrow \left[\left(z-1\right)\theta^3-\left(3 - \frac{3z}{2} + zs\right)\theta^2-\left(2 + \frac{3zs}{2} - \frac{zs^2}{4}\right)\theta+\frac{3zs^2}{8}\right] \mathcal{W}(z) =& 0,
\end{align*}
satisfied by \[4^{-\frac{s}{2}} \cdot z^{\frac {s}{2}}\tilde{\mathcal{G}}(z^{-1})=
\pFq{3}{2}{-\frac{s}{2},-\frac{s}{2},\frac{3}{2}}{2,3}{z}.\] Now, to find the differential equation satisfied by $\tilde{\mathcal{G}},$ we substitute $\mathcal{W}(z)$ with $4^{-\frac{s}{2}} \cdot z^{\frac {s}{2}}\tilde{\mathcal{G}}(z^{-1})$ above. Next, using the change of variables $z \mapsto \frac{1}{z}$ and further simplifying, we obtain a third order differential equation using \eqref{diffeq} \begin{equation}\label{reqdiffeq}
    s (8+6 s+s^2) \mathcal{V}(z) -2 (2+3 s^2 z+s (2+6 z)) \mathcal{V}' (z) -4 z (-3+s-3 s z)\mathcal{V}''(z)-8(z-1) z^2 \mathcal{V}'''(z) = 0,
\end{equation}satisfied by $\tilde{\mathcal{G}}(z).$  Dividing both sides of \eqref{reqdiffeq} by the coefficient of $\mathcal{V}'''(z),$ we find that this differential equation has a regular singularity at $z = 0$ (see \cite{Beukers}).

It remains to obtain a fundamental set of solutions of \eqref{reqdiffeq} around $z =0.$ Using the method of Frobenius to find power-series solutions to differential equations \cite{Norlund}\footnote{See \cite[§16.3]{Ince} for more details.}, we show that the local exponents at $z=0$ are $0, 0,$ and $\frac{3+s}{2},$ and we further obtain that the differential equation has a basis of solutions around $z = 0$ of the form 
\[ z^{\frac{3+s}{2}} F_0(z;s), \qquad F_1(z;s), \qquad \text{and} \qquad F_2(z;s) + \log(z)F_1(z;s).\]
Here $F_0, F_1$, and $F_2$ are holomorphic and non-zero at $z = 0$, and are respectively given by equations \eqref{f0zs}, \eqref{f1zs}, and  
\begin{align*}F_2(z;s) =& 
 G_{4,3}^{\,3,3}\!\left(\left.{\begin{matrix}
 1+\frac{s}{2}, 2+\frac{s}{2}, 3+\frac{s}{2}\\[5pt]0, 0, \frac{3+s}{2}  \end{matrix}}\;\right|\,z\right), 
\end{align*}
where $ G_{4,3}^{\,3,3}$ is a Meijer $G$-functions given by $\eqref{eq:G}$.

Now we have all the elements to prove Theorem \ref{thm:ZAMMk+x+y}.

\begin{proof}[Proof of Theorem \ref{thm:ZAMMk+x+y}]
    From Theorem \ref{thm:areal-zeta-x+y}, we have, for almost all\footnote{The only problematic $s$ are coming from the zeros of $\Gamma\left(2+\frac{s}{2}\right)$ and $s+4,$ and the poles of $\Gamma(2+s).$} $s,$ that $Z_{\D}(s, x+y+k)$ converges as $k \to 0$. For $z = \frac{k^2}{4},$ this eliminates $F_2(z;s) + \log(z)F_1(z;s)$ as a possible contributor to the expression of $Z_{\D}(s, x+y+k),$ since it does not converge as $k\rightarrow 0$. Therefore, $Z_{\D}(s, x+y+k)$ is a linear combination of $z^{\frac{3+s}{2}} F_0(z; s)$ and $F_1(z; s).$ 

Further, using the initial conditions $Z_{\D}(s, x+y+2) = \mathcal{G}(2)$ and $Z_{\D}(s, x+y) =  \frac{4}{s + 4} \frac{\Gamma(2+s)}{\Gamma\left(2+\frac{s}{2}\right)^2} = \mathcal{G}(0)$ we conclude that
\begin{equation}\label{AZMFchapt}
Z_{\D}(s, x+y+k) = \alpha_0(s) \left(\frac{k}{2}\right)^{s+3} F_0 \left( \frac{k^2}{4};s\right) + \alpha_1(s) F_1 \left( \frac{k^2}{4};s\right),\end{equation}
where $\alpha_1(s) = \frac{4}{s + 4} \frac{\Gamma(2+s)}{\Gamma\left(2+\frac{s}{2}\right)^2}$ and
\begin{align*}\alpha_0(s) &= \frac{\mathcal{G}(2) - \alpha_1(s)F_1(1;s)}{F_0(1;s)}\\ &= \frac{ 2^s \cdot
\pFq{3}{2}{-\frac{s}{2},-\frac{s}{2},\frac{3}{2}}{2,3}{1} - \frac{4}{s + 4} \frac{\Gamma(2+s)}{\Gamma\left(2+\frac{s}{2}\right)^2} \cdot 
\pFq{3}{2}{-2-\frac{s}{2},-1-\frac{s}{2},-\frac{s}{2}}{1,-\frac{1+s}{2}}{1}}{
\pFq{3}{2}{-\frac{1}{2},\frac{1}{2},\frac{3}{2}}{\frac{5+s}{2},\frac{5+s}{2}}{1}
}.\end{align*}
Since the hypergeometric series $_{q+1}F_{q}\!\left(\left.{\begin{matrix}a_{1},\dots ,a_{q+1}\\b_{1},\dots ,b_{q}\end{matrix}}\;\right|\,z\right)$ absolutely converges at $z = 1$ when \\$\re\left(\sum_{j=1}^q b_j - \sum_{\ell=1}^{q+1} a_{\ell}\right) > 0,$ we have that both $F_0(z; s)$ and $F_1(z; s)$ converge absolutely for $\re(s) > -\frac{7}{2}.$ Since $\Gamma(2+s)$ has no poles at $\re(s)>-2$ and a simple pole at $\re(s)=-2$, the expression in \eqref{AZMFchapt} converges absolutely in $\re(s) > -2,$ and this concludes the proof.

If we apply the formula in \cite{wolfram-hypergeometricx} with $a_1 = a_2 = -\frac{s}{2}$, $a_3 = \frac{3}{2}$, $b_1 = 2$ and $b_2 = 3$, this gives the identity:
\begin{align}
   \pFq{3}{2}{-\frac{s}{2},-\frac{s}{2},\frac{3}{2}}{2,3}{1} &= \frac{2 \Gamma \left( 1 + \frac{s}{2}\right)\Gamma \left( \frac{3+s}{2}\right)}{\Gamma \left( \frac{3}{2}\right) \Gamma \left( 2 + \frac{s}{2}\right) \Gamma \left(3 +  \frac{s}{2}\right) } \cdot  \pFq{3}{2}{-2-\frac{s}{2},-1-\frac{s}{2},-\frac{s}{2}}{1,-\frac{1+s}{2}}{1} \nonumber \\
   & \quad  + \frac{2 \Gamma \left( 1 + \frac{s}{2}\right)\Gamma \left(- \frac{3+s}{2}\right) }{\Gamma \left( \frac{1}{2}\right) \Gamma \left( \frac{3}{2}\right) \Gamma \left( -\frac{s}{2}\right) \Gamma \left( \frac{5+s}{2}\right)} \cdot \pFq{3}{2}{-\frac{1}{2},\frac{1}{2},\frac{3}{2}}{\frac{5+s}{2},\frac{5+s}{2}}{1} \label{eq:3f2-identity}
\end{align}
valid for $\re(s) > -2$. By the duplication formula for the Gamma function we find
\[
2^s \frac{2 \Gamma \left( 1 + \frac{s}{2}\right)\Gamma \left( \frac{3+s}{2}\right)}{\Gamma \left( \frac{3}{2}\right) \Gamma \left( 2 + \frac{s}{2}\right) \Gamma \left(3 +  \frac{s}{2}\right) } = \frac{4}{s + 4} \frac{\Gamma(2+s)}{\Gamma\left(2+\frac{s}{2}\right)^2}.
\]
Using \eqref{eq:3f2-identity}, this implies that
\begin{align*}
  \alpha_0(s)  &= \frac{ 2^s \cdot
\pFq{3}{2}{-\frac{s}{2},-\frac{s}{2},\frac{3}{2}}{2,3}{1} - \frac{4}{s + 4} \frac{\Gamma(2+s)}{\Gamma\left(2+\frac{s}{2}\right)^2} \cdot 
\pFq{3}{2}{-2-\frac{s}{2},-1-\frac{s}{2},-\frac{s}{2}}{1,-\frac{1+s}{2}}{1}}{
\pFq{3}{2}{-\frac{1}{2},\frac{1}{2},\frac{3}{2}}{\frac{5+s}{2},\frac{5+s}{2}}{1}
} \\
&= \frac{2^s \frac{2 \Gamma \left( 1 + \frac{s}{2}\right)\Gamma \left(- \frac{3+s}{2}\right) }{\Gamma \left( \frac{1}{2}\right) \Gamma \left( \frac{3}{2}\right) \Gamma \left( -\frac{s}{2}\right) \Gamma \left( \frac{5+s}{2}\right)} \cdot \pFq{3}{2}{-\frac{1}{2},\frac{1}{2},\frac{3}{2}}{\frac{5+s}{2},\frac{5+s}{2}}{1}}{\pFq{3}{2}{-\frac{1}{2},\frac{1}{2},\frac{3}{2}}{\frac{5+s}{2},\frac{5+s}{2}}{1}} = 2^s \frac{2 \Gamma \left( 1 + \frac{s}{2}\right)\Gamma \left(- \frac{3+s}{2}\right) }{\Gamma \left( \frac{1}{2}\right) \Gamma \left( \frac{3}{2}\right) \Gamma \left( -\frac{s}{2}\right) \Gamma \left( \frac{5+s}{2}\right)}.
\end{align*}
The latter expression can be written in terms of the tangent function using the reflection formula for the Gamma function:
\[
\alpha_0(s) = 2^s \frac{2 \Gamma \left( 1 + \frac{s}{2}\right)\Gamma \left(- \frac{3+s}{2}\right) }{\Gamma \left( \frac{1}{2}\right) \Gamma \left( \frac{3}{2}\right) \Gamma \left( -\frac{s}{2}\right) \Gamma \left( \frac{5+s}{2}\right)} = -\frac{2^{s+2} \tan \left(\frac{\pi  s}{2}\right) \Gamma\left(1+\frac{s}{2}\right)^2}{\pi  \Gamma\left(\frac{5+s}{2}\right)^2}.
\]
\end{proof}

\subsection{The areal Mahler measure of $x + y+k$}\label{sec:ammkxy}
In this subsection we prove the following result
\begin{prop} For $0\leq k \leq 2$, we have
\begin{equation}\label{mahlerarealkxy}
    \m_{\D}(x + y + k)= -\frac{4 k^3}{9 \pi}\cdot \pFq{3}{2}{-\frac{1}{2},\frac{1}{2},\frac{3}{2}}{\frac{5}{2},\frac{5}{2}}{\frac{k^2}{4}}+ \frac{k^2}{2} - \frac{1}{4}.
\end{equation}
\end{prop}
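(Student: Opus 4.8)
The plan is to use the fundamental relation $\left.\partial_s Z_\D(s,x+y+k)\right|_{s=0}=\m_\D(x+y+k)$ together with the explicit closed form for $Z_\D(s,x+y+k)$ furnished by Theorem \ref{thm:ZAMMk+x+y}. I would write $Z_\D(s,x+y+k)=A(s)+B(s)$ with $A(s)=\alpha_0(s)(k/2)^{s+3}F_0(k^2/4;s)$ and $B(s)=\alpha_1(s)F_1(k^2/4;s)$, and differentiate each piece at $s=0$. Since the formula is analytic in $s$ for $\re(s)>-2$, differentiating under the series defining $F_0$ and $F_1$ is justified by the local uniform convergence established in that theorem.

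For the $A$-term, the key observation is that $\alpha_0(s)$ carries the factor $\tan\!\left(\frac{\pi s}{2}\right)$, so $\alpha_0(0)=0$ and the product rule collapses to $A'(0)=\alpha_0'(0)(k/2)^3 F_0(k^2/4;0)$. I would compute $\alpha_0'(0)$ by writing $\alpha_0(s)=-\frac{1}{\pi}\tan\!\left(\frac{\pi s}{2}\right)g(s)$ with $g(s)=2^{s+2}\Gamma(1+\tfrac{s}{2})^2/\Gamma(\tfrac{5+s}{2})^2$; because $\tan\!\left(\frac{\pi s}{2}\right)=\frac{\pi s}{2}+O(s^3)$, only $g(0)=4/\Gamma(\tfrac52)^2=\frac{64}{9\pi}$ contributes, giving $\alpha_0'(0)=-\frac{32}{9\pi}$. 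Combining with $(k/2)^3=k^3/8$ and the evaluation $F_0(k^2/4;0)=\pFq{3}{2}{-\frac{1}{2},\frac{1}{2},\frac{3}{2}}{\frac{5}{2},\frac{5}{2}}{\frac{k^2}{4}}$ yields exactly the first term of the claimed formula.

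For the $B$-term, the decisive simplification is that the upper parameter $-\frac{s}{2}$ of $F_1$ vanishes at $s=0$, so $(-\tfrac{s}{2})_n=0$ for $n\ge1$ and hence $F_1(z;0)=1$; moreover $\alpha_1(0)=1$. Thus $B'(0)=\alpha_1'(0)+\left.\partial_s F_1(k^2/4;s)\right|_{s=0}$. Recognizing $\alpha_1(s)=Z_\D(s,x+y)$ from Theorem \ref{thm:areal-zeta-x+y}, I would obtain $\alpha_1'(0)$ by logarithmic differentiation: the two digamma contributions $\psi(2)$ cancel, leaving $\alpha_1'(0)=-\frac14$. For the remaining derivative I would proceed term-by-term; since $(-\tfrac{s}{2})_n$ vanishes at $s=0$ for $n\ge1$, only the term in which this factor is differentiated survives in each summand, and the residual factors $(-2)_n,(-1)_n$ force $n=1$ to be the unique nonzero index (as $(-1)_n=0$ for $n\ge2$). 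A short computation gives $\left.\partial_s F_1(z;s)\right|_{s=0}=2z=\frac{k^2}{2}$, so $B'(0)=\frac{k^2}{2}-\frac14$, which together with $A'(0)$ completes the proof.

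The main obstacle will be the bookkeeping for $F_1$: one must argue carefully that the only surviving contribution to its $s$-derivative at $s=0$ comes from differentiating the vanishing Pochhammer $(-\tfrac{s}{2})_n$, with every other summand retaining this zero factor, and then that $(-2)_n$ and $(-1)_n$ truncate the sum to the single index $n=1$. The tangent-and-$\Gamma$ algebra producing $\alpha_0'(0)$ and the digamma cancellation producing $\alpha_1'(0)$ are routine by comparison.
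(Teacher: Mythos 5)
Your proposal is correct and follows essentially the same route as the paper: both differentiate the expression from Theorem \ref{thm:ZAMMk+x+y} at $s=0$, using that $\alpha_0(s)=-\tfrac{32}{9\pi}s+\mathcal{O}(s^2)$, $\alpha_1(s)=1-\tfrac14 s+\mathcal{O}(s^2)$, $F_0(k^2/4;0)=\pFq{3}{2}{-\frac{1}{2},\frac{1}{2},\frac{3}{2}}{\frac{5}{2},\frac{5}{2}}{\frac{k^2}{4}}$, and $F_1(k^2/4;s)=1+\tfrac{k^2}{2}s+\mathcal{O}(s^2)$. Your computations of $\alpha_0'(0)$, $\alpha_1'(0)$, and the $n=1$ truncation of $\partial_s F_1$ at $s=0$ are all valid and merely make explicit the expansions the paper states without derivation.
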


\begin{proof}
The areal Mahler measure of $x+y+k$ can be obtained by computing $\frac{\dd}{\dd s} Z_{\D}(s, x+y+k) |_{s=0}$.
For $k \geq 2$, we find that $\m_{\D}(x+y+k) = \log (k)$.
For $k < 2$, we differentiate the expression \eqref{AZMFchapt} with respect to $s$.
By expanding the hypergeometric series in the numerator of $\alpha_0(s),$ we find that
\[\alpha_0(s)  = -\frac{32}{9 \pi} s + \mathcal{O}(s^2).\]
Moreover, we also have that
\begin{align*}\alpha_1(s) &= 1 - \frac{1}{4}s + \mathcal{O}(s^2),\\
F_0\left(\frac{k^2}{4};s\right)&= 
\pFq{3}{2}{-\frac{1}{2},\frac{1}{2},\frac{3}{2}}{\frac{5}{2},\frac{5}{2}}{\frac{k^2}{4}}+ \mathcal{O}(s),\\
F_1\left(\frac{k^2}{4};s\right)&= 1 + \frac{k^2}{2}s + \mathcal{O}(s^2). \end{align*}
Therefore, computing $\frac{\dd}{\dd s} Z_{\D}(s, x+y+k) |_{s=0},$ we conclude that
\begin{equation*}
    \m_{\D}(x + y + k)= -\frac{4 k^3}{9 \pi}\cdot \pFq{3}{2}{-\frac{1}{2},\frac{1}{2},\frac{3}{2}}{\frac{5}{2},\frac{5}{2}}{\frac{k^2}{4}}+ \frac{k^2}{2} - \frac{1}{4},
\end{equation*} which proves the desired result.
\end{proof}

\subsection{The  Mahler measure of $x + y+k$ and the Proof of Theorem \ref{thm:x+y+k}}\label{sec:comparison}
By Remark \ref{rem:CM}, the Mahler measure of $x + y + k$ for $0 < k < 2$ is given by
\begin{equation}\label{CMZAMM}
     \m(x + y + k) = \frac{1}{\pi} \left( 2 \log (k) \arcsin\left(\frac{k}{2}\right) + D \left( e^{2 i \arcsin\left(\frac{k}{2}\right)}\right)\right),
\end{equation} where $\arcsin\left(\frac{k}{2}\right) \in [0, \pi).$ Since \eqref{mahlerarealkxy} is written in terms of hypergeometric series in $k,$ we need a hypergeometric formula for $\m(x+y+k).$ The following result provides such a formula.
\begin{prop}
   For $0\leq k\leq 2,$ we have \begin{equation}\label{eq:hypgeok+x+y}
       \m(x +y+k) =  \frac{k}{\pi}\cdot 
       \pFq{3}{2}{\frac{1}{2},\frac{1}{2},\frac{1}{2}}{\frac{3}{2},\frac{3}{2}}{\frac{k^2}{4}}.
   \end{equation} 
\end{prop}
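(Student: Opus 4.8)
The plan is to convert the right-hand side of \eqref{eq:hypgeok+x+y} into an elementary integral involving $\arcsin$ and then identify it with the Cassaigne--Maillot expression \eqref{CMZAMM}. First I would simplify the hypergeometric series. Since $(\tfrac12)_n/(\tfrac32)_n = 1/(2n+1)$ and $(\tfrac12)_n/n! = \binom{2n}{n}/4^n$, one gets
\[
\frac{k}{\pi}\,\pFq{3}{2}{\frac12,\frac12,\frac12}{\frac32,\frac32}{\frac{k^2}{4}}
= \frac1\pi\sum_{n=0}^\infty \frac{\binom{2n}{n}}{16^{\,n}\,(2n+1)^2}\,k^{2n+1}
= \frac2\pi\int_0^{k/2}\frac{\arcsin t}{t}\dd t,
\]
where the last step uses the classical expansion $\arcsin t = \sum_{n\ge0}\binom{2n}{n}\,t^{2n+1}/(4^n(2n+1))$ integrated term by term. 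The ${}_3F_2$-series converges for $0\le k\le 2$, including the endpoint $z=1$ because $\sum b_j-\sum a_j=\tfrac32>0$, which legitimizes the interchange of summation and integration.

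Next I would evaluate this integral. Substituting $t=\sin\theta$ and writing $\alpha=\arcsin(k/2)$ turns it into $\int_0^{\alpha}\theta\cot\theta\dd\theta$. Integrating by parts (the boundary term at $0$ vanishes since $\theta\log\sin\theta\to0$) gives
\[
\int_0^{\alpha}\theta\cot\theta\dd\theta
= \alpha\log\sin\alpha-\int_0^{\alpha}\log\sin\theta\dd\theta
= \alpha\log\!\frac{k}{2}-\int_0^{\alpha}\log(2\sin\theta)\dd\theta+\alpha\log 2 .
\]
Now I would invoke the standard log-sine evaluation $\int_0^{\alpha}\log(2\sin\theta)\dd\theta=-\tfrac12\,\mathrm{Cl}_2(2\alpha)$ together with the identity $D(\e^{i\phi})=\mathrm{Cl}_2(\phi)$ for the Bloch--Wigner dilogarithm, which is immediate from \eqref{eq:BWdilog} since $\log|\e^{i\phi}|=0$. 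This yields $\int_0^{\alpha}\theta\cot\theta\dd\theta=\alpha\log k+\tfrac12 D(\e^{2i\alpha})$, whence
\[
\frac2\pi\int_0^{k/2}\frac{\arcsin t}{t}\dd t
=\frac1\pi\Big(2\arcsin\!\big(\tfrac{k}{2}\big)\log k + D\big(\e^{2i\arcsin(k/2)}\big)\Big),
\]
which is exactly \eqref{CMZAMM}, completing the argument.

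The main obstacle is the careful handling of the special-function evaluation rather than any deep difficulty: one must check that $\alpha=\arcsin(k/2)\in[0,\pi/2]$ throughout $0\le k\le2$, so that $t=\sin\theta$ and the log-sine formula apply without sign or branch issues, and one must justify the term-by-term integration all the way up to $k=2$. An alternative route I would keep in reserve is to differentiate both sides in $k$: the right-hand side has derivative $\tfrac{2}{\pi k}\arcsin(k/2)$ directly from its integral form, while Jensen's formula \eqref{eq:Jensen} reduces $\m(x+y+k)$ to $\tfrac1{2\pi}\int_{\cos\phi>-k/2}\log|k+\e^{i\phi}|\dd\phi$, whose $k$-derivative is an elementary rational-trigonometric integral (the moving endpoint contributes nothing, as the integrand vanishes there), and both sides agree at $k=0$. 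That route trades the log-sine identity for an $\arctan$ evaluation but is otherwise equivalent.
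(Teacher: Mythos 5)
Your proof is correct, but it takes a genuinely different route from the paper. The paper obtains \eqref{eq:hypgeok+x+y} by first computing the full classical Zeta Mahler function $Z(s,x+y+k)$ as a solution of a third-order hypergeometric ODE (Theorem \ref{thm:ZMMk+x+y}, via the random-walk densities and Frobenius' method), and then differentiating that expression at $s=0$ using the expansions of $\beta_0,\beta_1,G_0,G_1$; the Cassaigne--Maillot formula \eqref{CMZAMM} is only brought in afterwards, when comparing with the areal side. You instead take \eqref{CMZAMM} as the known value of $\m(x+y+k)$ and verify directly that the hypergeometric expression equals it: the reduction $(\tfrac12)_n/(\tfrac32)_n=1/(2n+1)$ and $(\tfrac12)_n/n!=\binom{2n}{n}/4^n$ correctly gives $\frac{k}{\pi}\pFq{3}{2}{\frac12,\frac12,\frac12}{\frac32,\frac32}{\frac{k^2}{4}}=\frac{2}{\pi}\int_0^{k/2}\frac{\arcsin t}{t}\dd t$, and the substitution $t=\sin\theta$, integration by parts, the log-sine evaluation $\int_0^\alpha\log(2\sin\theta)\dd\theta=-\tfrac12\mathrm{Cl}_2(2\alpha)$, and $D(\e^{i\phi})=\mathrm{Cl}_2(\phi)$ all check out, including continuity at the endpoints $k=0$ and $k=2$ (where $\mathrm{Cl}_2(\pi)=0$ recovers $\log 2$). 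Your approach is shorter and entirely elementary, at the cost of taking Cassaigne--Maillot as input (which the paper has already stated in Remark \ref{rem:CM}, so this is legitimate); the paper's longer route is self-contained at this step and additionally produces the Zeta Mahler function itself, which carries more information (e.g.\ higher Mahler measures) than the single value at $s=0$. Your reserve argument by differentiating in $k$ would also work and is essentially a third valid route.
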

We remark that the argument to  derive \eqref{eq:hypgeok+x+y} using the Zeta Mahler function is analogous to the ones in Sections \ref{sec:azmmkxy} and \ref{sec:ammkxy}. Indeed, we can  prove the following result. 
\begin{thm} \label{thm:ZMMk+x+y} For $\re(s)>-1$ and $k \geq 0$, we have 
\[Z(s,x+y+k)=\beta_0(s)\left(\frac{k}{2}\right)^{1+s} G_0\left(\frac{k^2}{4};s\right)+\beta_1(s) G_1\left(\frac{k^2}{4};s\right),\]
where 
\begin{equation}\label{eq:Gs}
G_0(z;s)=\pFq{3}{2}{\frac{1}{2},\frac{1}{2}, \frac{1}{2}}{\frac{3+s}{2},\frac{3+s}{2}}{z}, \qquad G_1(z;s)=\pFq{3}{2}{-\frac{s}{2},-\frac{s}{2}, -\frac{s}{2}}{1,\frac{1-s}{2}}{z},\end{equation}
\[\beta_0(s)=\frac{1}{2^{s}} \tan\left(\frac{\pi s}{2}\right) 
\left(\frac{\Gamma(1+s)}{\Gamma\left(\frac {1+s}{2}\right)\Gamma\left(\frac{3+s}{2}\right)}\right)^2
\qquad
\text{and}\qquad \beta_1(s)=\frac{\Gamma(1+s)}{\Gamma\left(1+\frac{s}{2}\right)^2}.\]
\end{thm}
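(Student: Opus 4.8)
The plan is to mirror, step for step, the random-walk derivation of Theorem \ref{thm:ZAMMk+x+y} carried out in Subsections \ref{sec:azmmkxy} and \ref{sec:ammkxy}, replacing the disk walks by unit-circle walks. Here $X=e^{2\pi i\theta_1}$, $Y=e^{2\pi i\theta_2}$ and $Z=e^{2\pi i\theta}$ are unit-length walks with $\theta_1,\theta_2,\theta$ uniform on $[0,1)$, and we set $T_1=|X+Y|$, $T_2=|X+Y+kZ|$. Exactly as in Remark \ref{probability}, $Z(s,x+y+k)$ equals the moment generating function $\E[T_2^s]$, so the whole computation again reduces to the density $p_{T_1}$ and the conditional density $p_{T_2\mid T_1}$. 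The key structural observation is that the conditional density of Lemma \ref{densityt2t1} and the inner integral of Lemma \ref{lem:Lem1} depend only on adding the unit walk $kZ$ to a point of modulus $v$; neither uses that $X,Y$ have random radii, so both carry over verbatim to the torus setting. Only the density $p_{T_1}$ changes.

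First I would treat the base case $k=0$. Writing $x=yw$ with $|y|=1$ gives $|x+y|=|w+1|$, so $Z(s,x+y)=Z(s,x+1)$, and \eqref{eq:KLO} yields $Z(s,x+y)=\frac{\Gamma(1+s)}{\Gamma(1+s/2)^2}=\beta_1(s)$. This simultaneously supplies the initial value at $k=0$ and, through the even moments $\int_0^2 v^{2n}p_{T_1}(v)\dd v=Z(2n,x+y)=\binom{2n}{n}$, determines $p_{T_1}$. Either by Mellin inversion (as in Lemma \ref{densityt1}) or directly from $T_1=2|\cos(\pi(\theta_2-\theta_1))|$, one finds $p_{T_1}(v)=\frac{2}{\pi\sqrt{4-v^2}}$ on $[0,2]$. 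Combining Lemma \ref{lem:Lem1} with these moments and expanding the ${}_2F_1$ termwise (the analogue of Lemma \ref{Lem3}), using $\binom{2n}{n}=\frac{4^n(1/2)_n}{n!}$, gives for $k>2$ and $\re(s)>0$
\[
Z(s,x+y+k)=k^s\cdot\pFq{3}{2}{-\frac{s}{2},-\frac{s}{2},\frac{1}{2}}{1,1}{\frac{4}{k^2}}.
\]

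Next I would pass to $0<k\le 2$ by repeating the analytic-continuation argument of Lemma \ref{Lem2} and Proposition \ref{Thm1}: continuing $\mathcal G(z)=z^s\,\pFq{3}{2}{-\frac{s}{2},-\frac{s}{2},\frac{1}{2}}{1,1}{4/z^2}$ into the upper half-plane and taking $z=k+i0^+$, the branch cut produces the combination $\re(\mathcal G(k))-\cot(\pi s/2)\,\im(\mathcal G(k))$, showing that $Z(s,x+y+k)$ solves the same hypergeometric differential equation for all $k\ge 0$. I would then transform the third-order equation satisfied by $\pFq{3}{2}{-\frac{s}{2},-\frac{s}{2},\frac{1}{2}}{1,1}{z}$ to the variable $z=k^2/4$ via $z\mapsto 1/z$ and the scaling substitution used to obtain \eqref{reqdiffeq}. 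A Frobenius analysis at $z=0$ then gives local exponents $0,0,\frac{1+s}{2}$ and a basis $z^{(1+s)/2}G_0(z;s)$, $G_1(z;s)$, and a logarithmic solution $G_2(z;s)+\log(z)G_1(z;s)$, with $G_0,G_1$ as in \eqref{eq:Gs}. Convergence of $Z(s,x+y+k)$ as $k\to0$ discards the logarithmic solution, and the two initial conditions $Z(s,x+y)=\beta_1(s)$ and $Z(s,x+y+2)=\mathcal G(2)$ pin down the coefficients: $\beta_1(s)=\frac{\Gamma(1+s)}{\Gamma(1+s/2)^2}$ and $\beta_0(s)=(\mathcal G(2)-\beta_1(s)G_1(1;s))/G_0(1;s)$.

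The main obstacle, as in the proof of Theorem \ref{thm:ZAMMk+x+y}, is the closed-form simplification of $\beta_0(s)$. This requires a three-term connection formula expressing $\pFq{3}{2}{-\frac{s}{2},-\frac{s}{2},\frac{1}{2}}{1,1}{1}$ as a linear combination of $G_0(1;s)$ and $G_1(1;s)$; substituting it into the quotient collapses the $\beta_1G_1(1;s)$ contribution and leaves a bare ratio of Gamma factors. The tangent in $\beta_0(s)$ then emerges from the reflection formula $\Gamma(w)\Gamma(1-w)=\pi/\sin(\pi w)$ applied to the $\Gamma(-\frac{s}{2})$ and $\Gamma(-\frac{1+s}{2})$ terms, together with the duplication formula recombining the remaining factors into $\big(\Gamma(1+s)/(\Gamma(\tfrac{1+s}{2})\Gamma(\tfrac{3+s}{2}))\big)^2$. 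Both $G_0(1;s)$ and $G_1(1;s)$ converge for $\re(s)>-\frac32$ (each having parameter excess $s+\frac32$), so the final identity holds on $\re(s)>-1$, the constraint coming from the pole of $\Gamma(1+s)$ in $\beta_1(s)$; a short check that the $\beta_0$-term vanishes as $k\to0$ completes the argument.
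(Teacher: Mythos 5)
Your proposal is correct and follows the paper's own proof almost step for step: the same random-walk reduction (reusing Lemma \ref{densityt2t1} and Lemmas \ref{lem:Lem1}--\ref{Lem2}, which indeed do not depend on the radii of $X$ and $Y$ being random), the same density $p_{S_1}(v)=\frac{2}{\pi\sqrt{4-v^2}}$ via Mellin inversion of $Z(s,x+1)$, the same closed form $k^s\cdot\pFq{3}{2}{-\frac{s}{2},-\frac{s}{2},\frac{1}{2}}{1,1}{\frac{4}{k^2}}$ for $k>2$, and the same continuation/Frobenius analysis with exponents $0,0,\frac{1+s}{2}$ and elimination of the logarithmic solution. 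The one place you diverge is in pinning down $\beta_0(s)$: you evaluate at $k=2$ and invoke a three-term connection formula for $\pFq{3}{2}{-\frac{s}{2},-\frac{s}{2},\frac{1}{2}}{1,1}{1}$ at unit argument (the exact analogue of \eqref{eq:3f2-identity} used in the areal case), whereas the paper instead uses the second initial condition at $k=1$, where $Z(s,x+y+1)$ is already known from Borwein--Straub--Wan in precisely the separated form $\beta_0(s)\left(\tfrac12\right)^{1+s}G_0\left(\tfrac14;s\right)+\beta_1(s)G_1\left(\tfrac14;s\right)$, so both coefficients can be read off with no connection formula at all. Your route is legitimate --- the required identity is a standard Thomae-type relation and the Gamma-factor simplification via reflection and duplication proceeds as you describe --- but it leaves one nontrivial hypergeometric identity to be verified, which the paper's choice of initial condition sidesteps entirely.
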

We consider the random variables  $S_1=|X+Y|$ and $S_2=|X+Y+k|$. Then for $k>2$, we have 
\begin{align}\label{eq:Zk}
Z(s,x+y+k)= \int_0^2 \int_{k-v}^{k+v}u^s \cdot\PP(S_2=u|S_1=v)\PP(S_1=v)\dd u \dd v.   
\end{align}
\begin{lem}
For $0\leq v \leq 2$,  
\[\PP(S_1=v) = \frac{2}{\pi \sqrt{4-v^2}}.\] 
\end{lem}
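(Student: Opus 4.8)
The plan is to compute the density directly by collapsing the two angular integrals into one. Writing $X = e^{2\pi i \theta_1}$ and $Y = e^{2\pi i \theta_2}$ with $\theta_1, \theta_2$ independent and uniformly distributed on $[0,1)$, I would factor out $e^{2\pi i \theta_2}$ and use rotation-invariance of the modulus to write $S_1 = |X+Y| = |1 + e^{2\pi i (\theta_1 - \theta_2)}|$. Setting $\tau = \theta_1 - \theta_2 \bmod 1$, which is again uniform on $[0,1)$ since it is the difference of two independent uniform angles on the circle, the problem reduces to finding the distribution of $|1 + e^{2\pi i \tau}|$ for a single uniform angle $\tau$.

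The next step is the elementary identity $|1 + e^{2\pi i \tau}|^2 = 2 + 2\cos(2\pi\tau) = 4\cos^2(\pi\tau)$, so that $S_1 = 2|\cos(\pi\tau)|$, which indeed takes values in $[0,2]$. From here I would compute the cumulative distribution function: for $0 \le v \le 2$ we have $\PP(S_1 \le v) = \PP(|\cos(\pi\tau)| \le v/2)$. For $\tau \in [0,1)$ this event is exactly the interval $\tfrac{1}{\pi}\arccos(v/2) \le \tau \le 1 - \tfrac{1}{\pi}\arccos(v/2)$, which has length $1 - \tfrac{2}{\pi}\arccos(v/2)$. Thus the CDF equals $1 - \tfrac{2}{\pi}\arccos(v/2)$, and differentiating with respect to $v$ (using $\tfrac{\dd}{\dd v}\arccos(v/2) = -\tfrac{1}{\sqrt{4-v^2}}$) gives $p_{S_1}(v) = \tfrac{2}{\pi\sqrt{4-v^2}}$, as claimed.

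As an alternative that mirrors the areal computation in Lemma \ref{densityt1}, one could instead observe that $Z(s,x+y) = \E[S_1^s]$, so that $Z(s-1, x+y)$ is the Mellin transform of $p_{S_1}(v)$, and then recover $p_{S_1}$ by an inverse Mellin transform via a residue computation. However, since $S_1$ depends only on the single angle $\tau$, the direct geometric route above is considerably shorter, and I would favor it.

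The computation has no serious obstacle; the only points requiring care are verifying that $\tau = \theta_1 - \theta_2 \bmod 1$ remains uniform after the reduction, and correctly tracking the branch of $\arccos$ together with the absolute value so that the length of the sublevel set of $|\cos(\pi\tau)|$ is computed accurately. Once those are pinned down, the differentiation is routine.
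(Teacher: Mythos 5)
Your proof is correct, and it takes a genuinely different route from the paper's. The paper obtains $p_{S_1}$ by recognizing $\int_0^\infty v^s\,\PP(S_1=v)\dd v$ as the known toral Zeta Mahler function $Z(s,x+y)=Z(s,x+1)=\Gamma(1+s)/\Gamma\left(1+\frac{s}{2}\right)^2$ from \eqref{eq:KLO} and then applying the inverse Mellin transform to $Z(s-1,x+y)$ — exactly the alternative you sketch in your third paragraph, and the same template used for the areal density in Lemma \ref{densityt1}. Your primary argument instead exploits that both steps of the walk have unit length, so rotation invariance collapses everything to a single uniform angle $\tau$ with $S_1=2|\cos(\pi\tau)|$, and the density follows from an elementary CDF computation; your branch bookkeeping for $\arccos$ and the uniformity of $\tau=\theta_1-\theta_2 \bmod 1$ are both handled correctly, and the derivative $\frac{\dd}{\dd v}\arccos\left(\frac{v}{2}\right)=-\frac{1}{\sqrt{4-v^2}}$ checks out. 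The trade-off: your route is shorter and self-contained (no inverse Mellin transform or residue computation needed), but it relies on the steps lying on the unit circle and does not generalize to the areal setting of Lemma \ref{densityt1}, where the random radii prevent the reduction to one angle; the paper's Mellin approach is uniform across both cases, which is presumably why it was chosen. One small point in your favor: the paper never explicitly redefines $X$ and $Y$ as circle-valued in this subsection (they were disk-valued in Subsection \ref{sec:azmmkxy}), and your explicit parametrization $X=e^{2\pi i\theta_1}$, $Y=e^{2\pi i\theta_2}$ makes the intended meaning unambiguous.
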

\begin{proof}
By equation \eqref{eq:KLO}
\[Z(s,x+y) = Z(s,x+1) = \frac{\Gamma(1+s)}{\Gamma\left(1+\frac{s}{2}\right)^2} =\frac{2^s}{\sqrt{\pi}}\frac{\Gamma\left(\frac{1+s}{2}\right)}{\Gamma\left(1+\frac{s}{2}\right)}  = \int_{0}^{\infty} v^s \PP(S_1=v) \dd v.\] Appying  the Mellin inverse to $Z(x+y; s-1)$, we get the result.  
\end{proof}

Recall by Lemma \ref{densityt2t1} that for  $k\geq 0$,  $0 \leq v \leq 2$ and $|k-v|< u <  k+v,$ we have 
\[\PP(S_2=u\,|\,S_1=v) = \frac{2u}{\pi \sqrt{4k^2 v^2 - (u^2 - v^2 - k^2)^2}}.\]
Then, from Lemmas \ref{lem:Lem1} and \ref{Lem2} we have, for $s$ real 
\begin{align}\label{eq:intS2S1}
\int_{|k-v|}^{k+v}  u^s \cdot \PP(S_2=u\,|\,S_1=v)  \dd u =& \int_{|k-v|}^{k+v} u^s \cdot
\frac{2u}{\pi \sqrt{4k^2 v^2 - (u^2 - v^2 - k^2)^2}}
 \dd u \nonumber \\=& \re (\mathcal{F}(k)) - \cot\left( \frac{\pi s}{2}\right) \im (\mathcal{F}(k)),
 \end{align}where $\mathcal{F}(k)$ is given by \eqref{F(k)}.

\begin{prop} For real $s>0$, not an odd integer, and $k>0$, 
\[Z(s,x+y+k)=\re (\mathcal{J}(k)) - \cot\left( \frac{\pi s}{2}\right) \im (\mathcal{J}(k)),\]
where, for $k>2$, 
\[\mathcal{J}(k):=k^s \cdot
  \pFq{3}{2}{-\frac{s}{2},-\frac{s}{2}, \frac{1}{2}}{1,1}{\frac{4}{k^2}},\]
and for $0<k\leq 2$,
\[\mathcal{J}(k) := \lim_{\varepsilon \to 0^{+}} \mathcal{J}(k + i \varepsilon),\] where we use the analytic continuation of $\mathcal{J}(z)$ to the complex upper half-plane. 
\end{prop}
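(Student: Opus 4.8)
The plan is to follow verbatim the architecture of the proof of Proposition \ref{Thm1}, simply replacing the areal density $p_{T_1}$ by the classical density $\PP(S_1=v)=\frac{2}{\pi\sqrt{4-v^2}}$ and the areal even moments by the classical ones. First I would treat the case $k>2$. Substituting the inner-integral evaluation of Lemma \ref{lem:Lem1} into \eqref{eq:Zk} leaves
\[
Z(s,x+y+k)=k^s\int_0^2 \PP(S_1=v)\,\pFq{2}{1}{-\frac{s}{2},-\frac{s}{2}}{1}{\frac{v^2}{k^2}}\dd v,
\]
so the whole computation reduces to a single $v$-integral that is the exact analogue of Lemma \ref{Lem3}.

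To evaluate that integral I would expand the ${}_2F_1$ as a power series in $v^2/k^2$ (legitimate since $k>2\ge v$ forces $v^2/k^2<1$), interchange the sum and the integral by absolute convergence, and use the even moments
\[
\int_0^2 v^{2n}\PP(S_1=v)\dd v = Z(2n,x+y)=\frac{\Gamma(1+2n)}{\Gamma(1+n)^2}=\binom{2n}{n},
\]
which follow from the classical Zeta Mahler function $Z(s,x+y)=Z(s,x+1)$ in \eqref{eq:KLO}, exactly as in the computation of $\PP(S_1=v)$. Writing $\binom{2n}{n}=\frac{4^n(1/2)_n}{n!}$ and $(1)_n=n!$ turns the resulting series into $\pFq{3}{2}{-\frac{s}{2},-\frac{s}{2},\frac{1}{2}}{1,1}{\frac{4}{k^2}}$, giving $Z(s,x+y+k)=\mathcal{J}(k)$ for $k>2$. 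Since the argument $4/k^2$ lies in $(0,1)$ here, $\mathcal{J}(k)$ is real, so $\im(\mathcal{J}(k))=0$ and the stated formula holds trivially in this range.

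For $0<k\le 2$ I would instead feed the full identity \eqref{eq:intS2S1} — which already packages the inner integral as $\re(\mathcal{F}(k))-\cot(\pi s/2)\im(\mathcal{F}(k))$ via Lemma \ref{Lem2} — into the outer integral against $\PP(S_1=v)$. Because $\PP(S_1=v)$ is real and nonnegative, it commutes with $\re$ and $\im$, so
\[
Z(s,x+y+k)=\re\!\left(\int_0^2 \PP(S_1=v)\,\mathcal{F}(k)\dd v\right)-\cot\!\left(\frac{\pi s}{2}\right)\im\!\left(\int_0^2 \PP(S_1=v)\,\mathcal{F}(k)\dd v\right).
\]
The $v$-integral inside is, by the same termwise computation, the analytic continuation to the upper half-plane of the function $k\mapsto k^s\,{}_3F_2$ coming from the region $k>2$; this is precisely $\mathcal{J}(k)=\lim_{\varepsilon\to0^+}\mathcal{J}(k+i\varepsilon)$, which completes the identification.

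The main obstacle is the same as in the areal case: justifying that the termwise integration over $v$ and the boundary passage $\varepsilon\to 0^+$ commute, i.e. that $\int_0^2 \PP(S_1=v)\,\mathcal{F}(k)\dd v$ genuinely reproduces the boundary value of the analytically continued ${}_3F_2$ and not some other branch. This requires controlling the singularity of $\mathcal{F}(k)$ at $v=k$ (where $v^2/k^2=1$) together with the integrable singularity of $\PP(S_1=v)$ at $v=2$, so that dominated convergence applies uniformly near the real axis. The hypotheses that $s$ be real and not an odd integer — ensuring $\cot(\pi s/2)$ is finite and that $(-1)^s=e^{\pi i s}$ in Lemma \ref{Lem2} — are exactly what make this control available.
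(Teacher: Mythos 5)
Your proposal matches the paper's argument essentially verbatim: the paper likewise combines \eqref{eq:Zk} with the inner-integral evaluation \eqref{eq:intS2S1}, expands the resulting ${}_2F_1$ termwise against $\PP(S_1=v)$ using the even moments $\int_0^2 v^{2n}\PP(S_1=v)\dd v=Z(2n,x+y)=\binom{2n}{n}=\tfrac{4^n(1/2)_n}{n!}$ to assemble the ${}_3F_2$, and then handles $0<k\le 2$ by taking real and imaginary parts of the analytic continuation exactly as in Proposition \ref{Thm1}. The computation and the identification of $\mathcal{J}(k)$ as the boundary value from the upper half-plane are both correct.
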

\begin{proof} From \eqref{eq:Zk} and \eqref{eq:intS2S1}, we have, for all $k > 0,$ 
\[Z(s,x+y+k)=  k^s \int_0^2 
   \pFq{2}{1}{-\frac{s}{2},-\frac{s}{2}}{1}{\frac{v^2}{k^2}}
  \PP(S_1=v) \dd v =  k^s \cdot
  \pFq{3}{2}{-\frac{s}{2},-\frac{s}{2}, \frac{1}{2}}{1,1}{\frac{4}{k^2}}.\]
\end{proof}
For $z=\frac{k^2}{4}$, define
\[\tilde{\mathcal{J}}(z) :=(4z)^\frac{s}{2}\cdot \pFq{3}{2}{-\frac{s}{2},-\frac{s}{2}, \frac{1}{2}}{1,1}{\frac{1}{z}}.\]
Then $\tilde{\mathcal{J}}(z)$ satisfies a differential equation  
\[
s^3 \mathcal{U}(z) -2 (3 s^2 z+s (2-6 z)+4 z-2)\mathcal{U}'(z) - 4 z (-3 s z+s+6 z-5) \mathcal{U}''(z) + 8z^2(1-z)\mathcal{U}'''(z) = 0.
\]
The above differential equation has a regular singularity at $z = 0$ with local exponents $0, 0$, and $\frac{1+s}{2}$. Then, using Frobenius’ method to
solve the differential equation, we find the following basis of solutions around $z=0$: 
\[ z^\frac{1+s}{2}G_0(z;s),\qquad G_1(z;s), \qquad \text{and}\qquad G_2(z;s)+\log (z) G_1(z;s)\]
where $G_0, G_1$, and $G_2$ are holomorphic and non-zero at $z=0$, and $G_0$ and $G_1$ are given by \eqref{eq:Gs}
and $G_2(z;s)$ is the  Meijer $G$-function $G_{4,3}^{\,2,3}\!\left(\left.{\begin{matrix}
 1+\frac{s}{2}, 1+\frac{s}{2}, 1+\frac{s}{2}\\[3pt] 0, 0, \frac{1+s}{2}  \end{matrix}}\;\right|\,z\right)$.


We now have all the elements to prove Theorem \ref{thm:ZMMk+x+y}.

\begin{proof}[Proof of Theorem \ref{thm:ZMMk+x+y}]Since for almost all $s$, we have that $Z(s,x+y+k)$ converges as $k \rightarrow 0$, this eliminates the possibility of having 
$G_2(z;s)+\log (z) G_1(z;s)$ as a contributor to the expression of $Z(s,x+y+k)$, since it does not converge as $k\rightarrow 0$ and $z=\frac{k^2}{4}$. Therefore, we expect $Z(s,x+y+k)$ to be a linear combination of $z^\frac{1+s}{2}G_0(z;s)$ and $G_1(z;s)$. We consider the initial conditions $Z(s,x+y)=\frac{\Gamma(1+s)}{\Gamma\left(1+\frac{s}{2}\right)^2}$ and 
\[Z(s,x+y+1)=\frac{1}{2^{2s+1}}\tan \left(\frac{\pi s}{2}\right)\binom{s}{\frac{s-1}{2}}^2 \pFq{3}{2}{\frac{1}{2},\frac{1}{2}, \frac{1}{2}}{\frac{3+s}{2},\frac{3+s}{2}}{\frac{1}{4}}+\binom{s}{\frac{s}{2}}\pFq{3}{2}{-\frac{s}{2},-\frac{s}{2}, -\frac{s}{2}}{1,\frac{1-s}{2}}{\frac{1}{4}}\]
(see \cite[Corollary 1]{Borwein-Straub-Wan}). Thus, we conclude that 
\begin{equation}\label{ZMFchapt} Z(s,x+y+k)=\beta_0(s)\left(\frac{k}{2}\right)^{1+s} G_0\left(\frac{k^2}{4};s\right)+\beta_1(s) G_1\left(\frac{k^2}{4};s\right),\end{equation}
where 
\[\beta_0(s)=\frac{1}{2^{s}} \tan\left(\frac{\pi s}{2}\right) 
\left(\frac{\Gamma(1+s)}{\Gamma\left(\frac{1+s}{2}\right)\Gamma\left(\frac{3+s}{2}\right)}\right)^2
\qquad
\text{and}\qquad \beta_1(s)=\frac{\Gamma(1+s)}{\Gamma\left(1+\frac{s}{2}\right)^2}.\]
This concludes our argument. 
\end{proof}

\begin{prop}
   For $0\leq k \leq 2$ we have 
 \[      \m(x +y+k) =  \frac{k}{\pi}\cdot \pFq{3}{2}{\frac{1}{2},\frac{1}{2}, \frac{1}{2}}{\frac{3}{2},\frac{3}{2}}{\frac{k^2}{4}}.\]
\end{prop}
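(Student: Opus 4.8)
The plan is to read off the formula directly from Theorem \ref{thm:ZMMk+x+y} by differentiating the Zeta Mahler function at $s=0$, invoking the defining property $\m(x+y+k) = \left.\frac{\partial}{\partial s} Z(s,x+y+k)\right|_{s=0}$. Since the expression in Theorem \ref{thm:ZMMk+x+y} is valid for $\re(s) > -1$, it is analytic in a neighborhood of $s=0$ and this differentiation is legitimate. Writing $z = \frac{k^2}{4} \in [0,1]$, I would expand each of the two summands $\beta_0(s)\left(\frac{k}{2}\right)^{1+s} G_0(z;s)$ and $\beta_1(s) G_1(z;s)$ as a power series in $s$ about $s=0$ and retain only the linear coefficient.

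First I would record the Taylor data needed. For the second summand, $\beta_1(0)=1$ and, via the logarithmic derivative, $\beta_1'(0) = \psi(1) - \psi(1) = 0$, so $\beta_1(s) = 1 + O(s^2)$. Crucially, every numerator parameter of $G_1(z;s) = \pFq{3}{2}{-\frac{s}{2},-\frac{s}{2},-\frac{s}{2}}{1,\frac{1-s}{2}}{z}$ carries a factor $-\frac{s}{2}$, so each term with $n\ge 1$ contains $\big((-\frac{s}{2})_n\big)^3 = O(s^3)$; hence $G_1(z;s) = 1 + O(s^3)$. Consequently
\[
\left.\frac{\partial}{\partial s}\big[\beta_1(s)\, G_1(z;s)\big]\right|_{s=0} = 0,
\]
and the second summand contributes nothing to the linear coefficient.

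The entire answer therefore originates from the first summand. Here I would use $\tan\!\left(\frac{\pi s}{2}\right) = \frac{\pi s}{2} + O(s^3)$ together with $\Gamma(1)=1$, $\Gamma\!\left(\frac12\right)=\sqrt{\pi}$, $\Gamma\!\left(\frac32\right)=\frac{\sqrt{\pi}}{2}$, giving
\[
\beta_0(s) = \frac{\pi s}{2}\left(\frac{1}{\sqrt{\pi}\cdot \sqrt{\pi}/2}\right)^{2} + O(s^2) = \frac{2s}{\pi} + O(s^2),
\]
so $\beta_0(0) = 0$ and $\beta_0'(0) = \frac{2}{\pi}$. Because $\beta_0(0)=0$, the product rule annihilates every term except the one differentiating $\beta_0$, leaving
\[
\left.\frac{\partial}{\partial s}\left[\beta_0(s)\left(\tfrac{k}{2}\right)^{1+s} G_0(z;s)\right]\right|_{s=0} = \beta_0'(0)\cdot \frac{k}{2}\cdot G_0(z;0) = \frac{2}{\pi}\cdot\frac{k}{2}\cdot \pFq{3}{2}{\frac{1}{2},\frac{1}{2},\frac{1}{2}}{\frac{3}{2},\frac{3}{2}}{\frac{k^2}{4}},
\]
using $G_0(z;0) = \pFq{3}{2}{\frac{1}{2},\frac{1}{2},\frac{1}{2}}{\frac{3}{2},\frac{3}{2}}{z}$. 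Summing the two contributions yields $\m(x+y+k) = \frac{k}{\pi}\,\pFq{3}{2}{\frac{1}{2},\frac{1}{2},\frac{1}{2}}{\frac{3}{2},\frac{3}{2}}{\frac{k^2}{4}}$, as claimed.

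The only genuinely delicate point I anticipate is justifying the term-by-term differentiation and the interchange of the $s$-derivative with the hypergeometric summation uniformly for $z\in[0,1]$, in particular at the endpoint $k=2$ where $z=1$. There, convergence of $G_0(\,\cdot\,;0)$ at $z=1$ is ensured since $\frac{3}{2}+\frac{3}{2}-\left(\frac{1}{2}+\frac{1}{2}+\frac{1}{2}\right) = \frac{3}{2} > 0$, and the $O(s^3)$ control on $G_1$ must be verified to be uniform in $z$; everything else is a routine Taylor expansion.
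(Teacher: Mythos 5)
Your proposal is correct and follows essentially the same route as the paper: both differentiate the expression for $Z(s,x+y+k)$ from Theorem \ref{thm:ZMMk+x+y} at $s=0$, using the expansions $\beta_0(s)=\frac{2s}{\pi}+\mathcal{O}(s^2)$, $\beta_1(s)=1+\mathcal{O}(s^2)$, $G_1\left(\frac{k^2}{4};s\right)=1+\mathcal{O}(s^3)$, and $G_0\left(\frac{k^2}{4};0\right)=\pFq{3}{2}{\frac{1}{2},\frac{1}{2},\frac{1}{2}}{\frac{3}{2},\frac{3}{2}}{\frac{k^2}{4}}$, so that only the term differentiating $\beta_0$ survives. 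Your additional remarks on uniformity of the $\mathcal{O}(s^3)$ bound and convergence at $z=1$ are sound but go slightly beyond what the paper records.
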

\begin{proof}
The Mahler measure of $x+y+k$ can be obtained by computing $\frac{\dd}{\dd s} Z(s, x+y+k) |_{s=0}$.
For $k \geq 2$, we find that $\m(x+y+k) = \log (k)$.
For $k < 2$, we differentiate the expression \eqref{ZMFchapt} with respect to $s$. We have that 
\begin{align*}
\beta_0(s)=&\frac{2s}{\pi}+\mathcal{O}(s^2),\\
\beta_1(s) =& 1+\mathcal{O}(s^2),\\
G_0\left(\frac{k^2}{4};s\right)=&\pFq{3}{2}{\frac{1}{2},\frac{1}{2}, \frac{1}{2}}{\frac{3}{2},\frac{3}{2}}{\frac{k^2}{4}}+\mathcal{O}(s),\\
G_1\left(\frac{k^2}{4};s\right)=&1+\mathcal{O}(s^3).
\end{align*}
Therefore, computing $\frac{\dd}{\dd s} Z(s, x+y+k) |_{s=0}$, we conclude that 
\[\m(x+y+k)=\frac{k}{\pi}\cdot \pFq{3}{2}{\frac{1}{2},\frac{1}{2}, \frac{1}{2}}{\frac{3}{2},\frac{3}{2}}{\frac{k^2}{4}}.\]
\end{proof}

\begin{proof}[Proof of Theorem \ref{thm:x+y+k}]
By \cite{wolfram-52} and \cite{wolfram-32}, we have for $|z|<1$, 
\begin{align*} 
\pFq{3}{2}{\frac{1}{2},\frac{1}{2}, \frac{1}{2}}{\frac{3}{2},\frac{3}{2}}{z}+\frac{16z}{9}\cdot \pFq{3}{2}{-\frac{1}{2},\frac{1}{2},\frac{3}{2}}{\frac{5}{2},\frac{5}{2}}{z}=&\frac{(2z+5)\sqrt{1-z}}{4}-\frac{(1-8z)\mathrm{Log}\left(\sqrt{1-z}+\sqrt{-z}\right)}{4\sqrt{-z}},
\end{align*}
where we are taking the principal branch of the logarithm. 
Notice that the above identity holds for $\{|z| < 1\} \setminus (0, \infty).$ However, it can  also be extended to $(0,1)$ since the function $\frac{\mathrm{Log}\left(\sqrt{1-z}+\sqrt{-z}\right)}{\sqrt{-z}}$ is continuous in this set. 

Comparing \eqref{mahlerarealkxy} and \eqref{eq:hypgeok+x+y}, we conclude that 
\begin{align*}
\m(x+y+k)-\m_{\D}(x+y+k)
=& \frac{k}{\pi}\cdot \pFq{3}{2}{\frac{1}{2},\frac{1}{2}, \frac{1}{2}}{\frac{3}{2},\frac{3}{2}}{\frac{k^2}{4}}+\frac{4 k^3}{9 \pi}\cdot \pFq{3}{2}{-\frac{1}{2},\frac{1}{2},\frac{3}{2}}{\frac{5}{2},\frac{5}{2}}{\frac{k^2}{4}}- \frac{k^2}{2} + \frac{1}{4}\\
=&\frac{k\sqrt{4-k^2}(10+k^2) +(8-16k^2)\arccos\left(\frac{k}{2}\right)}{16\pi },
\end{align*}
 which proves Theorem \ref{thm:x+y+k}. 
 \end{proof}

 Evaluating the above equality at $k = \sqrt{2},$ we have \[\m(x+y+\sqrt{2}) - \m_{\D}(x+y+\sqrt{2}) = \frac{3}{2\pi} - \frac{3}{8}.\] Now comparing formulas \eqref{eq:k=sqrt2} and \eqref{eq:k=sqrt2classical}, and combining with \cite[Theorem 6]{Lalin-Roy}, we conclude that  \[ \mathcal{C}_{\sqrt{2}} = \frac{\Gamma\left(\frac{3}{4}\right)^2}{\sqrt{2\pi^3}} \cdot 
 \pFq{4}{3}{\frac{1}{4},\frac{1}{4},\frac{3}{4},\frac{3}{4}}{\frac{1}{2},\frac{5}{4},\frac{5}{4}}{1} - \frac{\Gamma\left(\frac{1}{4}\right)^2}{72\sqrt{2\pi^3}} \cdot 
 \pFq{4}{3}{\frac{3}{4},\frac{3}{4},\frac{5}{4},\frac{5}{4}}{\frac{3}{2},\frac{7}{4},\frac{7}{4}}{1}
  = \frac{\log 2}{4},\] 
as remarked in the introduction.

 Further combining the identities \eqref{eq:k<2} and \eqref{CMZAMM} together yields a much simpler expression for $\m_{\D}(x+y+k)$ in terms of special values of Bloch--Wigner dilogarithm  for all $k \in \C,$ namely, \begin{align*}
     \m_{\D}(x+y+k) =& \frac{1}{\pi} \left( 2 \log (k) \arcsin\left(\frac{k}{2}\right) + D \left( \e^{2 i \arcsin\left(\frac{k}{2}\right)}\right)\right) \\ &-\frac{k\sqrt{4-k^2}(10+k^2) +(8-16k^2)\arccos\left(\frac{k}{2}\right)}{16\pi }.
 \end{align*}

\subsection{A direct computation for the areal Mahler measure of $x+y+k$}\label{sec:direct}

It is possible to prove Theorem \ref{thm:x+y+k} directly, without computing the areal Zeta Mahler function of $x+y+k$. More precisely, we get the following result.
\begin{thm}\label{thm:direct}
    For $k \geq 2,$ \[\m_{\D}(x+y+k) = \log (k),\] and, for $0 \leq k < 2,$
    \begin{align*}
    \m_{\D}(x+y+k) =& \frac{1}{\pi} \left( 2 \log(k) \arcsin\left(\frac{k}{2}\right)+ D \left( \e^{2 i \arcsin \left(\frac{k}{2}\right)}\right)\right)\\&  - \frac{k\sqrt{4-k^2}(10+k^2) +(8-16k^2)\arccos\left(\frac{k}{2}\right)}{16\pi }.
    \end{align*}
\end{thm}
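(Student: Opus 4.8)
The plan is to bypass the areal Zeta Mahler function by combining the rotational invariance of the disk with Jensen's formula. Writing $X,Y$ for independent random variables uniformly distributed on $\D$, we have $\m_{\D}(x+y+k)=\E[\log|X+Y+k|]$. Since the law of $X+Y$ is invariant under rotation, the modulus $T_1=|X+Y|$ and the argument $\arg(X+Y)$ are independent, with $\arg(X+Y)$ uniform on $[0,2\pi)$. Averaging over the angle first and using the angular form of Jensen's formula, $\frac{1}{2\pi}\int_0^{2\pi}\log|r\e^{i\phi}+k|\dd\phi=\log\max(r,k)$, I would obtain the one-dimensional reduction
\[
\m_{\D}(x+y+k)=\int_0^2\log\max(v,k)\,p_{T_1}(v)\dd v,
\]
where $p_{T_1}$ is given by Lemma \ref{densityt1}. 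The identical argument applied to points on the torus, using the density $p_{S_1}(v)=\frac{2}{\pi\sqrt{4-v^2}}$ from the classical case, yields the companion identity $\m(x+y+k)=\int_0^2\log\max(v,k)\,p_{S_1}(v)\dd v$; this will let me recognize the transcendental part of the final answer without computing a dilogarithm from scratch.

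For $k\ge 2$ one has $\max(v,k)=k$ throughout $[0,2]$, so $\m_{\D}(x+y+k)=\log k\int_0^2 p_{T_1}(v)\dd v=\log k$, since $p_{T_1}$ is a probability density. For $0\le k<2$ I would split the integral at $v=k$,
\[
\m_{\D}(x+y+k)=\log k\int_0^k p_{T_1}(v)\dd v+\int_k^2\log v\,p_{T_1}(v)\dd v,
\]
and substitute $p_{T_1}(v)=2v-\frac1\pi v^2\sqrt{4-v^2}-\frac4\pi v\arcsin\left(\frac v2\right)$. The first integral and the polynomial pieces of the second are elementary; the delicate contributions are $\int_k^2 v\sqrt{4-v^2}\,\log v\dd v$ and, above all, $\int_k^2 v\arcsin\left(\frac v2\right)\log v\dd v$.

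The crux is this $\arcsin$--$\log$ integral. Integrating by parts (differentiating $\arcsin(v/2)$, whose derivative is $\frac{1}{\sqrt{4-v^2}}$) and using the splitting $\frac{v^2}{\sqrt{4-v^2}}=\frac{4}{\sqrt{4-v^2}}-\sqrt{4-v^2}$, the integral reduces, modulo elementary antiderivatives, to the single transcendental kernel $\int_k^2\frac{\log v}{\sqrt{4-v^2}}\dd v$. This is precisely the kernel governing $\m(x+y+k)$ through $p_{S_1}$: comparing the $p_{S_1}$-representation of $\m(x+y+k)$ with the Cassaigne--Maillot formula of Remark \ref{rem:CM} identifies it outright as $\int_k^2\frac{\log v}{\sqrt{4-v^2}}\dd v=\tfrac12 D\!\left(\e^{2i\arcsin(k/2)}\right)$. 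I would therefore organize the computation so that the coefficient of this kernel reassembles $\frac1\pi\big(2\log k\,\arcsin(k/2)+D(\e^{2i\arcsin(k/2)})\big)$ verbatim, after which every remaining term is elementary.

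The main obstacle is the bookkeeping of those elementary remainders. One must evaluate the boundary contributions at $v=2$ and $v=k$ arising from each integration-by-parts step, combine antiderivatives involving $v\sqrt{4-v^2}$, $v^3\sqrt{4-v^2}$ and $\arcsin(v/2)$, rewrite the $\arcsin$ boundary terms via $\arcsin\left(\frac v2\right)+\arccos\left(\frac v2\right)=\frac\pi2$, and then check that the entire elementary residue collapses to $-\frac{k\sqrt{4-k^2}(10+k^2)+(8-16k^2)\arccos(k/2)}{16\pi}$. Matching the transcendental kernel against $\m(x+y+k)$ is conceptually immediate once the reduction is in place; it is this final elementary cancellation, rather than any nonelementary evaluation, where all the computational effort lies.
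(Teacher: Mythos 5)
Your plan is correct and follows essentially the same route as the paper's own proof in Subsection \ref{sec:direct}: an angular application of Jensen's formula reduces $\m_{\D}(x+y+k)$ to $\int_0^2\log\max(v,k)\,p_{T_1}(v)\dd v$ with the density of Lemma \ref{densityt1}, the integral is split at $v=k$, and the only transcendental piece collapses to $\int_k^2\frac{\log v}{\sqrt{4-v^2}}\dd v=\tfrac12 D\bigl(\e^{2i\arcsin(k/2)}\bigr)$. The sole (cosmetic) difference is that you identify this kernel by matching the analogous torus computation against Cassaigne--Maillot, whereas the paper evaluates it directly as a Clausen-type integral $\int_{\arcsin(k/2)}^{\pi/2}\log(2\sin\alpha)\dd\alpha$.
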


To see this, we consider as before two random variables $X$ and $Y$ defined by walks of lengths $\rho_1$ and $\rho_2$ along the directions $\theta_1$ and $\theta_2,$ uniformly distributed on $[0, 1),$ respectively. As in the case of the areal Zeta Mahler function, we have that $X$ takes values $x = \rho_1 e^{2\pi i\theta_1}$ and $Y$ takes values $y=\rho_2 e^{2\pi i \theta_2}$. Consider the random variable $T_1 = |X+Y|$, and let $p_{T_1}(t)$ denote its probability density.  Then we have
\[
\int_{0}^2 p_{T_1}(t) t^s  \dd t= Z_{\D}(s,x+y).
\]
By Theorem \ref{thm:areal-zeta-x+y}, $Z_{\D} (s, x+y)=\frac{4}{s+4} \frac{\Gamma(2+s)}{\Gamma\left(\frac{s}{2}+2\right)^2}$. Applying the inverse Mellin transform to $Z_{\D}(s-1, x+y)$, we obtain, by Lemma \ref{densityt1},
\[p_{T_1}(t) = \PP(T_1 = t) = \frac{t}{\pi}\left(2\pi - t\sqrt{4-t^2} - 4\arcsin\left(\frac{t}{2}\right)\right).\]
\begin{proof}[Proof of Theorem \ref{thm:direct}]
Suppose that $k>0$. By Jensen's formula \eqref{eq:Jensen},  we
have the following identity \begin{align*}
    \m_{\D}(x+y+k) =& \frac{1}{2\pi^3i} \int_{\D^2 \times \TT} \log |x+y+kz| \dd A(x) \dd A(y) \frac{\dd z}{z}\nonumber \\ =& \log (k) + \frac{1}{\pi^2} \int_{\substack{\D^2 \\ |x+y| > k}} \log \left|\frac{x+y}{k}\right|\dd A(x) \dd A(y)\\
    =& \log (k) + \1_{\{0 <k < 2\}}\int_{k}^2 p_{T_1}(t)\log \left(\frac{t}{k}\right) dt,
\end{align*}
where $\1_{\{0< k < 2\}} = 1$ when $0 < k < 2,$ and equals $0$ otherwise. 

Therefore, we have, for $ k \geq 2,$ $\m_{\D}(x+y+k)= \log (k).$  For $0 < k < 2,$ we have \begin{align}
        \int_{k}^2 p_{T_1}(t) \log \left(\frac{t}{k}\right) \dd t =& \frac{1}{\pi}\int_{k}^2 t\log \left(\frac{t}{k}\right)\left(2\pi - t\sqrt{4-t^2} - 4\arcsin\left(\frac{t}{2}\right)\right) \dd t \nonumber \\ =& \frac{1}{\pi} \left[2\pi \int_{k}^2 t\log \left(\frac{t}{k}\right) \dd t - \int_{k}^2 t^2 \sqrt{4-t^2}\log \left(\frac{t}{k}\right) \dd t\right.\nonumber \\& \left.- 4\int_{k}^2 t\log \left(\frac{t}{k}\right) \arcsin\left(\frac{t}{2}\right) \dd t.  \right] \label{evaluate}
    \end{align}
We treat each integral separately. For example,
\begin{align*}
        2\int_{k}^2 \frac{\arcsin{(t/2)}}{t} \dd t =& \pi \log 2 - 2\arcsin\left(\frac{k}{2}\right)\log (k) - 2\int_{k}^2\frac{\log t}{\sqrt{4-t^2}} \dd t \nonumber \\ =&  \pi \log 2 - 2\arcsin\left(\frac{k}{2}\right)\log (k) - 2\int_{\arcsin\left(\frac{k}{2}\right)}^{\frac{\pi}{2}} \log (2\sin{\alpha}) \dd \alpha \nonumber \\ =& \pi \log 2 - 2\arcsin\left(\frac{k}{2}\right)\log (k) - D\left(e^{2i\arcsin\left(\frac{k}{2}\right)}\right), 
    \end{align*}
while the other integrals in \eqref{evaluate} can be evaluated by standard methods. Adding them all together, we reach the conclusion.
\end{proof}

\section{The areal Mahler measure of $(x+1)(y+1)+kz$}\label{sec:Qk}
In this section we prove Theorem \ref{thm:Qk}.
Recall that $Q_k = Q_k(x,y,z) =(x+1)(y+1)+kz$. Since $\m_{\D}(Q_k) = \m_{\D}(Q_{|k|})$, we will assume that $k$ is real and non-negative.

\subsection{The  Mahler measure of $(x+1)(y+1)+kz$} We start by computing  $\m(Q_k)$. We recall the following notation from Theorem \ref{thm:Qk}:\[F(t) = \frac{1}{2 \pi} \cdot \pFq{2}{1}{ \frac{1}{2}, \frac{1}{2}}{1}{1 - \frac{t^2}{16}}.\]

\begin{lem}
\label{Lem:ordinaryMM}
If $0< k <4$, then
\[
    \m((x+1)(y+1)+kz) = \log(k) + \int_k^4 \log\left(\frac{t}{k}\right) F(t)  \dd t
\]
and if $k\geq 4$,
$\m((x+1)(y+1)+kz) = \log(k)$. 
\end{lem}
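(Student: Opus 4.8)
The goal is to compute the ordinary (torus) Mahler measure $\m(Q_k)$ where $Q_k = (x+1)(y+1)+kz$ and all three variables range over the unit circle $\TT$. The first step is to integrate out the variable $z$ using Jensen's formula \eqref{eq:Jensen}. Writing $W = (x+1)(y+1)$, for fixed $x,y\in\TT$ the inner integral over $z\in\TT$ of $\log|W + kz|$ is, by Jensen applied to the linear polynomial $kz + W$ in $z$, equal to $\log^+|W| + \log k = \log\max\{|W|, k\}$. Hence
\[
\m(Q_k) = \frac{1}{(2\pi i)^2}\int_{\TT^2}\log\max\{|(x+1)(y+1)|, k\}\,\frac{\dd x}{x}\frac{\dd y}{y}.
\]
This reduces the three-variable problem to a two-variable one involving the distribution of the random variable $|(x+1)(y+1)|$ as $x,y$ run uniformly over the circle.

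\textbf{Passing to a density.} The key object is the probability density of $T := |(1+x)(1+y)|$ when $x = e^{2\pi i\theta_1}$, $y = e^{2\pi i\theta_2}$ with $\theta_1,\theta_2$ uniform on $[0,1)$. I would introduce this density $F(t)$ and rewrite the above as
\[
\m(Q_k) = \int_0^4 \log\max\{t,k\}\,F(t)\dd t = \log k \int_0^4 F(t)\dd t + \int_k^4 \log\!\left(\frac{t}{k}\right)F(t)\dd t,
\]
where I split the range at $t=k$ and use that $\log\max\{t,k\} = \log k$ for $t<k$ and $\log\max\{t,k\} = \log k + \log(t/k)$ for $t>k$. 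Since $F$ is a probability density on $[0,4]$ its total integral is $1$, giving the stated formula $\m(Q_k) = \log k + \int_k^4 \log(t/k)F(t)\dd t$ for $0<k<4$, and simply $\log k$ when $k\ge 4$ (the second term vanishes because the range of integration is empty). To close the argument I must verify that $F$ is exactly $\frac{1}{2\pi}\,\pFq{2}{1}{\frac12,\frac12}{1}{1-\frac{t^2}{16}}$.

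\textbf{Identifying the density.} To pin down $F(t)$, the cleanest route uses the moments. Since $|1+x|^2 = 2 + 2\cos(2\pi\theta_1)$ and the two factors are independent, the $s$-th moment of $T$ factors as $\E[T^s] = \E[|1+x|^s]^2 = Z(s,x+1)^2$, and by \eqref{eq:KLO} this equals $\left(\Gamma(1+s)/\Gamma(1+\tfrac{s}{2})^2\right)^2$. Thus $F$ is recovered as the inverse Mellin transform of $Z(s-1,x+1)^2$, computed by summing residues at the poles along the negative real axis exactly as in the proof of Lemma \ref{densityt1}. The resulting series should be recognized, via the duplication formula for $\Gamma$ and the defining series of $\pFq{2}{1}{}{}{}$, as the claimed Gauss hypergeometric function evaluated at $1 - t^2/16$. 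An alternative and perhaps more transparent derivation writes $T^2 = |1+x|^2|1+y|^2 = 4\cos^2(\pi\theta_1)\cdot 4\cos^2(\pi\theta_2)$ and computes the density of a product of two such factors by a direct convolution, matching it against the integral representation from Theorem \ref{thm:hyperint}.

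\textbf{Main obstacle.} The genuine work lies in confirming the exact hypergeometric form of $F(t)$: either resumming the residue series into $\pFq{2}{1}{\frac12,\frac12}{1}{1-\frac{t^2}{16}}$, or carrying through the convolution and matching it to Theorem \ref{thm:hyperint}. The Jensen reduction and the splitting of the logarithm are routine; everything hinges on this density identification, which is essentially a self-contained computation about random walks of two unit steps and is closely parallel to the density computations already carried out in Lemmas \ref{densityt1} and \ref{densityt2t1}.
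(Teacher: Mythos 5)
Your proof is correct and follows essentially the same route as the paper: Jensen's formula in the $z$-variable reduces $\m(Q_k)$ to a two-variable integral of $\log\max\{|(1+x)(1+y)|,k\}$ over $\TT^2$, which is then expressed through the density of $T=|(1+x)(1+y)|$ on $[0,4]$ and split at $t=k$. The paper phrases the last step slightly differently --- it writes $\m(Q_k)=\log(k)+\int_k^4\log(t)\,p(t)\dd t-\log(k)\int_k^4 p(t)\dd t$, obtained by differentiating the moment identity $\frac{1}{(2\pi i)^2}\int_{a\le|f|\le b}|f|^s\frac{\dd x}{x}\frac{\dd y}{y}=\int_a^b p(t)t^s\dd t$ in $s$ --- but this is identical to your $\log\max$ decomposition once $\int_0^4 F=1$ is used. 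The one genuine difference is in the density identification $p(t)=F(t)$: the paper does not prove it, instead citing the proof of Theorem 6.2 in Brunault--Zudilin, whereas you sketch a self-contained derivation. Your factorization $\E[T^s]=Z(s,x+1)^2=\Gamma(1+s)^2/\Gamma\left(1+\frac{s}{2}\right)^4$ (valid by independence of $|1+x|$ and $|1+y|$, via equation \eqref{eq:KLO}) is exactly the identity \eqref{eq:bz} the paper quotes later, so your target is consistent; and both of your proposed closing routes work --- inverse Mellin by residues parallel to Lemma \ref{densityt1}, or convolving two copies of the arcsine density $\frac{2}{\pi\sqrt{4-u^2}}$ and recognizing the complete elliptic integral via $\pFq{2}{1}{\frac{1}{2},\frac{1}{2}}{1}{m}=\frac{2}{\pi}K(\sqrt{m})$. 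You correctly flag this identification as the only real work; what your approach buys is independence from the external citation, at the cost of a nontrivial resummation you have outlined but not executed.

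One small slip worth fixing: in the Jensen step, the intermediate expression ``$\log^+|W|+\log k$'' is not equal to $\log\max\{|W|,k\}$ in general; the correct intermediate is $\log k+\log^+\left(\frac{|W|}{k}\right)$. Your final expression $\log\max\{|W|,k\}$ is right and is what you actually use, so nothing downstream is affected.
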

\begin{proof}
Let $f(x,y) := (x + 1)(y + 1)$. By Jensen's formula \eqref{eq:Jensen}, we can write
\[
\m(Q_k) = \log(k) + \frac{1}{(2 \pi i)^2}\int_{\substack{\TT^2 \\ |f(x,y)| \geq k}} \log |f(x,y)/k| \, \frac{\dd x}{x} \frac{\dd y}{y},
\]
which returns $\log(k)$ if $k \geq 4$. Let $X$ and $Y$ be the random variables defined by walks of unit length along the directions $\theta_1$ and $\theta_2,$ uniformly distributed on $[0, 1),$ respectively. In particular, $X$ takes values $x = e^{2\pi i\theta_1}$ and $Y$ takes values $y=e^{2\pi i \theta_2}.$
Write $p(t)$ for the density function of $|f(X,Y)|$. For $a, b \in [0,4]$ we have that
\begin{equation*}
\frac{1}{(2 \pi i)^2} \int_{\substack{\TT^2 \\ a \leq |f(x,y)| \leq b}} |f(x,y)|^s \frac{\dd x}{x}\frac{\dd y}{y}= \int_{a}^{b} p(t) t^s  \dd t,
\end{equation*}
which is valid for $\re (s) > \sigma_1(f)$ for some negative $\sigma_1(f)$.
After differentiation with respect to $s$ on both sides
\begin{equation*}
\frac{1}{(2 \pi i)^2} \int_{\substack{\TT^2 \\ a \leq |f(x,y)| \leq b}} \log |f(x,y)| \cdot |f(x,y)|^s  \frac{\dd x}{x}\frac{\dd y}{y} = \int_{a}^{b} p(t) \log(t) t^s  \dd t.
\end{equation*}
For $0 < k < 4$ this implies that
\[
\m(Q_k) = \log(k) + \int_{k}^4 \log(t) p(t)  \dd t- \log(k) \int_{k}^4 p(t)  \dd t.
\]
It remains to apply the fact that $p(t) = F(t)$ by \cite[Proof of Thm 6.2]{BrunaultZudilin}. 
\end{proof}

\subsection{The  probability density function of the areal Mahler measure of $(x+1)(y+1)+kz$}
By Pritsker's Theorem \ref{thm:Pritsker}, for \emph{fixed} $x,y \in \D$,
\[
\frac{1}{\pi}\int_{\D} \log|Q_k(x,y,z)| \dd A(z) = \log(k) + \log^{+} |f(x,y)/k| + \frac{1}{2} (\min(|f(x,y)/k|^2,1) -1).
\]
This implies that
\begin{align}
\label{eq:mDQ}
\m_{\D}(Q_k) =& \frac{1}{\pi^2} \int_{\substack{\D^2 \\ |f(x,y)| \geq k}} \log|f(x,y)|  \dd A(x) \dd A(y) \nonumber \\&+ \frac{1}{2 \pi^2} \int_{\substack{\D^2 \\ |f(x,y)| \leq k}} \left( \frac{|f(x,y)|^2}{k^2} - 1 + 2 \log(k) \right)  \dd A(x) \dd A(y).    
\end{align}

As before, let $X$ and $Y$ be the random variables defined by walks of lengths $\rho_1$ and $\rho_2$ along the directions $\theta_1$ and $\theta_2,$ uniformly distributed on $[0, 1),$ respectively. In particular, $X$ takes values $x = \rho_1 e^{2\pi i\theta_1}$ and $Y$ takes values $y=\rho_2 e^{2\pi i \theta_2}.$
For the random walk $U = |f(X,Y)|$, we define $p_{U}(t)$  to be its probability density. Since $U$ takes values in $[0,4]$, the discussion of Remark \ref{probability} leads us  to
\[Z_{\D}(s,f(x,y))=
\int_{0}^4 p_{U}(t) t^s  \dd t.
\]
 $Z_{\D}(s,f(x,y))$ can be computed explicitly by \eqref{eq:LalRoy}, giving us 
 \begin{equation}
 \label{eq:zeta-f(x,y)}
      Z_{\D}(s,f(x,y)) = Z_{\D}(s,x+1)^2 = \frac{\Gamma(2+s)^2}{\Gamma\left(2+\frac{s}{2}\right)^4}.
 \end{equation}

Moreover, we have that  for $a, b \in [0,4]$,
\begin{equation}
\label{eq:moment}
\frac{1}{\pi^2} \int_{\substack{\D^2 \\ a \leq |f(x,y)| \leq b}} |f(x,y)|^s \dd A(x) \dd A(y) = \int_{a}^{b} p_{U}(t) t^s  \dd t,
\end{equation}
which is valid for $\re (s) > \sigma_1(f)$ for some negative $\sigma_1(f)$.
After differentiation with respect to $s$ on both sides
\begin{equation}
\label{eq:logmoment}
\frac{1}{\pi^2} \int_{\substack{\D^2 \\ a \leq |f(x,y)| \leq b}} \log |f(x,y)| \cdot |f(x,y)|^s  \dd A(x) \dd A(y) = \int_{a}^{b} p_{U}(t) \log(t) t^s  \dd t.
\end{equation}

The two equations \eqref{eq:moment} and \eqref{eq:logmoment} allow us to write the areal Mahler measure \eqref{eq:mDQ} in terms of $p_{U}$. We combine this in the following result.
\begin{lem}
\label{lem:mdform}
For $k \geq 4$
 \[
    \m_{\D}(Q_k) = \frac{9}{8k^2} - \frac{1}{2} + \log(k).
    \]
and for $0 < k < 4$
\[
\m_{\D}(Q_k) = \frac{9}{8k^2} - \frac{1}{2} + \log(k) + \int_{k}^4 p_{U}(t) \left( \log(t) - \frac{t^2}{2 k^2} + \frac{1}{2} - \log(k) \right) \dd t.
\]
\end{lem}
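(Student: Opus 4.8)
The plan is to convert both integrals in \eqref{eq:mDQ} into one-dimensional integrals against the density $p_U$ of $U=|f(X,Y)|$ by means of the moment identities \eqref{eq:moment} and \eqref{eq:logmoment}, and then to evaluate the resulting integrals over $[0,k]$ in closed form using the two special values of $Z_{\D}(s,f(x,y))$ furnished by \eqref{eq:zeta-f(x,y)}. This reduces everything to elementary bookkeeping, since the hard analytic content (the factorization $Z_{\D}(s,f(x,y))=Z_{\D}(s,x+1)^2$ and the Lal\'in--Roy evaluation \eqref{eq:LalRoy}) is already available.

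First I would treat the region $|f|\ge k$ in the first integral of \eqref{eq:mDQ}: taking $s=0$, $a=k$, $b=4$ in \eqref{eq:logmoment} rewrites $\frac{1}{\pi^2}\int_{|f|\ge k}\log|f|$ as $\int_k^4 p_U(t)\log t\,\dd t$. For the second integral, over $|f|\le k$, I would split the integrand $\frac{|f|^2}{k^2}-1+2\log(k)$ into three pieces and apply \eqref{eq:moment} with $a=0$, $b=k$ at $s=2$ and $s=0$, producing $\frac{1}{2k^2}\int_0^k p_U(t)t^2\,\dd t$ and $\frac{2\log(k)-1}{2}\int_0^k p_U(t)\,\dd t$. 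At this stage $\m_{\D}(Q_k)$ is expressed purely in terms of integrals of $p_U$.

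Next, to convert the integrals over $[0,k]$ into constants plus tail integrals over $[k,4]$, I would use that $p_U$ is supported on $[0,4]$ together with the two evaluations $\int_0^4 p_U(t)\,\dd t=Z_{\D}(0,f(x,y))=1$ and $\int_0^4 p_U(t)t^2\,\dd t=Z_{\D}(2,f(x,y))=\Gamma(4)^2/\Gamma(3)^4=\frac94$, both read directly off \eqref{eq:zeta-f(x,y)}. Writing $\int_0^k=\int_0^4-\int_k^4$ in each term, the constant contributions collect to $\frac{9}{8k^2}+\log(k)-\frac12$, while the tail integrals combine into a single integrand $\log(t)-\frac{t^2}{2k^2}+\frac12-\log(k)$, which is exactly the $0<k<4$ formula in the statement. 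For $k\ge4$ the region $|f|\ge k$ has measure zero and the tail integral over $[k,4]$ is empty, so only the constant terms survive, yielding $\frac{9}{8k^2}-\frac12+\log(k)$.

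The computation is routine once the moment identities are in place, so I do not anticipate a genuine obstacle. The only points needing care are the justification that \eqref{eq:moment} and \eqref{eq:logmoment} may be invoked at $s=0$ and $s=2$ (both lie in the half-plane $\re(s)>\sigma_1(f)$ of absolute convergence, so this is legitimate) and the careful separation of the logarithmic and quadratic terms so that the constants and the tail integrand assemble correctly. Granting \eqref{eq:zeta-f(x,y)} and hence the values $\int_0^4 p_U=1$ and $\int_0^4 p_U t^2=\frac94$, the argument closes with no further difficulty.
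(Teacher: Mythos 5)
Your proposal is correct and follows essentially the same route as the paper: both rewrite the two regions of \eqref{eq:mDQ} via the moment identities \eqref{eq:moment} and \eqref{eq:logmoment}, use $\int_0^4 p_U = 1$ and $\int_0^4 t^2 p_U = \tfrac94$ from \eqref{eq:zeta-f(x,y)} to split $\int_0^k = \int_0^4 - \int_k^4$, and handle $k\ge 4$ by noting the region $|f|\ge k$ is negligible. No meaningful differences.
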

\begin{proof}
    For $0 \leq k < 4$, we apply \eqref{eq:mDQ}, \eqref{eq:moment}, and \eqref{eq:logmoment} to write
        \[
    \m_{\D}(Q_k) = \int_{k}^4 p_{U}(t) \log(t)  \dd t+ \frac{1}{2} \int_{0}^k \left(\frac{t^2}{k^2} - 1 + 2\log(k) \right) p_{U}(t)  \dd t.
    \]
We can put the two integrals together by observing that equation \eqref{eq:zeta-f(x,y)} yields
\[
\int_{0}^4 t^2 p_{U}(t)  \dd t= \frac{\Gamma(2+2)^2}{\Gamma\left(2+\frac{2}{2}\right)^4} = \frac{9}{4}
\]
and
\[
\int_{0}^4 p_{U}(t)  \dd t= \frac{\Gamma(2+0)^2}{\Gamma\left(2+0\right)^4} = 1,
\]
so that
\[
\m_{\D}(Q_k) = \frac{9}{8k^2} - \frac{1}{2} + \log(k) + \int_{k}^4 p_{U}(t) \left( \log(t) - \frac{t^2}{2 k^2} + \frac{1}{2} - \log(k) \right)  \dd t.
\]
We treat the range $k \geq 4$. Since $|f(x,y)| \leq 4$ for all $x, y \in \D$, by \eqref{eq:mDQ},
\begin{align*}
 \m_{\D}(Q_k) &=  \frac{1}{2 \pi^2} \int_{\D^2} \left( \frac{|f(x,y)|^2}{k^2} - 1 + 2 \log(k) \right)  \dd A(x) \dd A(y) \\
 &= \frac{1}{2 k^2} Z_{\D}(2,f(x,y)) - \frac{1}{2} + \log(k) = \frac{9}{8 k^2} - \frac{1}{2} + \log(k),
\end{align*}
where in the last identity, we have applied \eqref{eq:zeta-f(x,y)}. 
\end{proof}

We now consider the solutions $y(t)$ to the nonhomogeneous linear differential equation
\begin{equation}
\label{E1}
\theta^2 y(t) = t F(t).     
\end{equation}
Here $\theta$ is the differential operator $t \frac{\dd}{\dd t}$. 
A basis for the kernel of $\theta^2$ is given by $\{1, \log(t)\}$, so any two solutions of \eqref{E1} differ by a linear combination of $1$ and $\log(t)$. Since $F(t)$ is analytic in $t = 4$, we must have that $y(t)$ is analytic at this point. Therefore, we can find a unique solution $y_0(t)$ to \eqref{E1} with $y_{0}(4) = y_{0}'(4) = 0$. 
\begin{lem}
\label{Lem26} For $0\leq t \leq 4$, we have
    \[p_{U}(t) = t y_0(t).\]
\end{lem}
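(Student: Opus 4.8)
The plan is to identify $t\,y_0(t)$ with $p_U(t)$ by showing the two functions, both supported on $[0,4]$, have the same power moments, and then appealing to the uniqueness of a density on a compact interval determined by its moments. The moments of $p_U$ are already available: by \eqref{eq:moment} and \eqref{eq:zeta-f(x,y)}, $\int_0^4 p_U(t)\,t^s\,\dd t = Z_\D(s,f(x,y)) = \frac{\Gamma(2+s)^2}{\Gamma\left(2+\frac{s}{2}\right)^4}$. On the torus side, the proof of Lemma \ref{Lem:ordinaryMM} identifies $F$ with the density $p$ of $|f(X,Y)|$, so that $\int_0^4 F(t)\,t^s\,\dd t = Z(s,x+1)^2 = \frac{\Gamma(1+s)^2}{\Gamma\left(1+\frac{s}{2}\right)^4}$ by \eqref{eq:KLO}. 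It therefore suffices to prove the single identity $\int_0^4 \bigl(t\,y_0(t)\bigr)\,t^s\,\dd t = Z_\D(s,f(x,y))$ for $\re(s)$ large.

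The core computation extracts the Mellin transform of $y_0$ from the differential equation \eqref{E1}. I would pair \eqref{E1} with $t^{s-1}$ and integrate over $[0,4]$; the right-hand side gives $\int_0^4 t\,F(t)\,t^{s-1}\,\dd t = Z(s,x+1)^2$. For the left-hand side I would use the identity $\int_0^4 (\theta u)(t)\,t^{s-1}\,\dd t = \bigl[u(t)\,t^s\bigr]_0^4 - s\int_0^4 u(t)\,t^{s-1}\,\dd t$ twice, first with $u=\theta y_0$ and then with $u=y_0$. The boundary terms at $t=4$ vanish precisely because of the imposed conditions $(\theta y_0)(4)=4\,y_0'(4)=0$ and $y_0(4)=0$, yielding
\[
s^2\int_0^4 y_0(t)\,t^{s-1}\,\dd t = Z(s,x+1)^2 = \frac{\Gamma(1+s)^2}{\Gamma\left(1+\frac{s}{2}\right)^4}.
\]
Replacing $s$ by $s+2$ and simplifying with $\Gamma(3+s)=(2+s)\Gamma(2+s)$ cancels the resulting factor $(s+2)^2$, and one obtains exactly $\int_0^4 y_0(t)\,t^{s+1}\,\dd t = \frac{\Gamma(2+s)^2}{\Gamma\left(2+\frac{s}{2}\right)^4}=Z_\D(s,f(x,y))$, which is the required moment identity for $t\,y_0(t)$.

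The step demanding genuine care, and the main obstacle, is the vanishing of the boundary contributions at $t=0$. To control these I would solve \eqref{E1} explicitly in the form $(\theta y_0)(t)=\int_4^t F(u)\,\dd u$, which is forced by $(\theta y_0)(4)=0$. Since $\int_0^4 F(u)\,\dd u = Z(0,x+1)^2 = 1$, this shows that $\theta y_0$ remains bounded as $t\to 0^+$ while $y_0(t)=O\bigl(\log(1/t)\bigr)$; hence $y_0(t)\,t^s\to 0$ and $(\theta y_0)(t)\,t^s\to 0$ for $\re(s)>0$, which legitimizes both integrations by parts. The remaining algebra with Gamma factors is routine, as is justifying the differentiation under the integral sign used for \eqref{eq:logmoment}. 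Finally, since $t\,y_0(t)$ and $p_U(t)$ are integrable on $[0,4]$, continuous on $(0,4)$, and share every moment $\int_0^4(\cdot)\,t^n\,\dd t$ for $n\in\Z_{\ge 0}$, the Weierstrass approximation theorem forces their difference to be identically zero, giving $p_U(t)=t\,y_0(t)$ on $[0,4]$.
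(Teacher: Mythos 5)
Your proof is correct and follows essentially the same route as the paper's: both reduce the claim to matching Mellin transforms on $[0,4]$ and evaluate $\int_0^4 t^{s}y_0(t)\,\dd t$ by two integrations by parts, killing the boundary terms at $t=4$ via $y_0(4)=y_0'(4)=0$ and invoking the known transform $\int_0^4 t^s F(t)\,\dd t=\Gamma(1+s)^2/\Gamma\left(1+\tfrac{s}{2}\right)^4$. Your extra justifications --- the boundedness of $\theta y_0$ and the $O(\log(1/t))$ bound for $y_0$ near $t=0$, and the Weierstrass argument for moment determinacy --- make explicit two points the paper leaves implicit.
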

\begin{proof}
    It suffices to show that the Mellin transforms of  $p_{U}(t)$ and $t y_0(t)$ coincide.
    Since
    \[
    \int_0^\infty t^{s-1} p_{U}(t)  \dd t = Z_{\D}(s-1,x+1)^2 = \frac{\Gamma(1+s)^2}{\Gamma\left(\frac{3+s}{2}\right)^4},
    \]
    it suffices to show that
    \[
    \int_0^4 t^s y_0(t)  \dd t = \frac{\Gamma(1+s)^2}{\Gamma\left(\frac{3+s}{2}\right)^4}
    \]
    for $s$ with sufficiently big $\re(s)$.
    We will perform integration by parts twice and use the identity \cite[Proof of Thm 6.2]{BrunaultZudilin}:
    \begin{equation} \label{eq:bz}\int_0^4 t^s F(t) \dd t = \frac{\Gamma(1+s)^2}{\Gamma\left(1+\frac{s}{2}\right)^4} .\end{equation}
The first integration by parts gives
    \begin{align*}
        \int_0^4 t^s y_0(t)  \dd t &= \Big [ \frac{t^{s+1}}{s+1} y_0(t) \Big]_{0}^4 - \frac{1}{s+1}\int_0^4 t^s \theta y_0(t)  \dd t
        \\
        &= - \frac{1}{s+1}\int_0^4 t^s \theta y_0(t) \dd t.
    \end{align*}
Here the last line follows from $y_0(4) = 0$ and the fact that $s$ is chosen with $\re(s)$  big enough (so that $t^s y_0(t)$ is guaranteed to vanish at $t =0$).
Finally performing integration by parts a second time and using that  $y_0'(4) = 0$ gives
\begin{align*}
\int_0^4 t^s y_0(t)  \dd t &= \frac{1}{(s+1)^2} \int_0^4 t^s \theta^2 y_0(t)  \dd t \\ &= \frac{1}{(s+1)^2} \int_0^4 t^{s+1} F(t)  \dd t = \frac{1}{(s+1)^2} \frac{\Gamma(2+s)^2}{\Gamma\left(\frac{3+s}{2}\right)^4} =\frac{\Gamma(1+s)^2}{\Gamma\left(\frac{3+s}{2}\right)^4},
\end{align*}
by \eqref{eq:bz}.
This is what we needed to show. 
\end{proof}

\begin{lem}
\label{L10}
We have
    \[\theta y_0(t) =  -\int_t^4 F(u)  \dd u .\]
\end{lem}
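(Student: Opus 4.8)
The plan is to exploit the fact that $\theta^2 = \theta\circ\theta$ in order to reduce the second-order equation \eqref{E1} to a first-order one for the single quantity $\theta y_0$. Concretely, I would set $g(t) := \theta y_0(t) = t\, y_0'(t)$, so that the defining equation $\theta^2 y_0(t) = t F(t)$ becomes $\theta g(t) = t F(t)$. Since $\theta = t\frac{\dd}{\dd t}$, this reads literally as $t\, g'(t) = t F(t)$, and dividing by $t$ (valid for $t>0$) produces the clean identity $g'(t) = F(t)$ on $(0,4]$.

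The next step is to integrate this and to pin down the constant of integration using the boundary data built into $y_0$. Integrating $g' = F$ from $t$ to $4$ yields
\[
g(4) - g(t) = \int_t^4 F(u)\dd u.
\]
The normalization $y_0'(4)=0$ that characterizes $y_0$ forces $g(4) = \theta y_0(4) = 4\, y_0'(4) = 0$, whence $g(t) = -\int_t^4 F(u)\dd u$, which is precisely the claimed formula for $\theta y_0(t)$.

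I do not expect a serious obstacle here; the argument is essentially a one-line reduction of order. The only point requiring a word of care is that the equation $g'=F$ is derived on $(0,4]$ rather than at the endpoint $t=0$, but this is harmless since we only ever integrate over $[t,4]$ with $t>0$ and evaluate the boundary term at $t=4$. The analyticity of $y_0$ at $t=4$ (guaranteed by that of $F$ there, as observed just before the statement) is exactly what legitimizes the evaluation $g(4)=0$.
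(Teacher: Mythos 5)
Your proposal is correct and is essentially the paper's argument: the paper also reduces to the first-order equation $\theta g = tF$ satisfied by both sides and uses the vanishing at $t=4$ (via uniqueness of the solution with that value), whereas you simply integrate $g'=F$ explicitly — the same idea in constructive form. No gaps.
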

\begin{proof}
Both the left and the right hand side are annihilated by the differential operator $\theta - t F$ and are analytic in a neighborhood of $t=4$, taking the value $0$ at this point. Since there is a unique function with these properties, the equality follows.
\end{proof}
\begin{lem}
\label{L11}
    We have
    \[y_0(t) = \int_t^4 \log(u) F(u)  \dd u  + \log(t) \theta y_0(t).\]
\end{lem}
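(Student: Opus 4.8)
The plan is to mimic the proof of Lemma \ref{L10}: I will show that the right-hand side, which I denote
\[
R(t) := \int_t^4 \log(u) F(u) \dd u + \log(t)\,\theta y_0(t),
\]
satisfies $\theta R = \theta y_0$ and agrees with $y_0$ at the point $t=4$, so that the two functions must coincide. Recall that $\theta = t\frac{\dd}{\dd t}$, that $y_0$ is the unique solution of \eqref{E1} with $y_0(4)=y_0'(4)=0$, and that Lemma \ref{L10} gives $\theta y_0(t) = -\int_t^4 F(u)\dd u$, whence $\frac{\dd}{\dd t}\theta y_0(t) = F(t)$.

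First I would apply $\theta$ to $R(t)$ term by term. By the Fundamental Theorem of Calculus the integral term contributes $t\cdot\bigl(-\log(t)F(t)\bigr) = -t\log(t)F(t)$. For the second term, the product rule together with the derivative of $\theta y_0$ supplied by Lemma \ref{L10} gives
\[
\theta\bigl(\log(t)\,\theta y_0(t)\bigr) = t\Bigl(\tfrac{1}{t}\,\theta y_0(t) + \log(t)F(t)\Bigr) = \theta y_0(t) + t\log(t)F(t).
\]
Adding the two contributions, the two copies of $t\log(t)F(t)$ cancel and I obtain $\theta R(t) = \theta y_0(t)$.

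It remains to pin down the constant. Since $\theta(R - y_0) = 0$ and $t>0$ on the relevant interval $(0,4]$, the difference $R - y_0$ has vanishing derivative there and is therefore constant. Evaluating at $t=4$, both integrals are empty and $\theta y_0(4) = -\int_4^4 F(u)\dd u = 0$, so $R(4)=0$; combined with the defining condition $y_0(4)=0$ this forces the constant to be zero, giving $R(t)=y_0(t)$ as claimed. There is no genuine obstacle in this argument; the only step requiring care is the cancellation of the $t\log(t)F(t)$ terms, which hinges on using Lemma \ref{L10} to differentiate $\theta y_0$ correctly.
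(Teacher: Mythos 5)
Your proof is correct and follows essentially the same route as the paper: apply $\theta$ to the right-hand side, observe that the $t\log(t)F(t)$ contributions cancel (you invoke Lemma \ref{L10} to differentiate $\theta y_0$, which is equivalent to the paper's direct use of $\theta^2 y_0 = tF(t)$ from \eqref{E1}), and then match values at $t=4$. Your extra sentence justifying why $\theta(R-y_0)=0$ forces the difference to be constant on $(0,4]$ is a minor but welcome addition of rigor over the paper's terser conclusion.
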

\begin{proof}
If we apply the $\theta$ operator to the right-hand side we find 
\[
\theta \left( \int_t^4 \log(u) F(u) \dd u  + \log(t) \theta y_0(t)  \right) = - t \log(t) F(t) + \theta y_0(t) + \log(t) \theta^2 y_0(t) = \theta y_0(t).
\]
Also the value at $t = 4$ coincides on both sides, so the equality follows.
\end{proof}

\begin{coro}
\label{pdcor}
We have the following evaluations for $0 < k < 4$:
\begin{align*}
    p_{U}(k)/k &=  \int_{k}^{4} \log\left(\frac{u}{k}\right) F(u)  \dd u = \m(Q_k) - \log(k) \\
    \theta (p_{U}(t)/t)(k) &= -\int_k^4 F(u)  \dd u.
\end{align*}
\end{coro}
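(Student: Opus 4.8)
The plan is to assemble the corollary directly from Lemmas \ref{Lem26}, \ref{L10}, and \ref{L11}, together with the Mahler measure formula of Lemma \ref{Lem:ordinaryMM}; no fresh computation is needed, so the task is purely one of combining the established identities in the right order. Throughout I work in the range $0 < k < 4$, where all the relevant functions are analytic and the cited lemmas apply.

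I would first dispatch the second identity, since it is the most immediate. By Lemma \ref{Lem26} we have $p_{U}(t)/t = y_0(t)$ for $0 \le t \le 4$, and applying the operator $\theta = t\frac{\dd}{\dd t}$ and invoking Lemma \ref{L10} gives at once
\[
\theta\bigl(p_{U}(t)/t\bigr)(k) = \theta y_0(k) = -\int_k^4 F(u)\dd u.
\]

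For the first chain of equalities I again use $p_{U}(k)/k = y_0(k)$ from Lemma \ref{Lem26}, and then substitute the expression for $y_0$ furnished by Lemma \ref{L11}:
\[
\frac{p_{U}(k)}{k} = y_0(k) = \int_k^4 \log(u)\,F(u)\dd u + \log(k)\,\theta y_0(k).
\]
Replacing $\theta y_0(k)$ by $-\int_k^4 F(u)\dd u$ via Lemma \ref{L10} and merging the two integrals, the two logarithmic terms combine into $\log(u) - \log(k) = \log(u/k)$, producing $\int_k^4 \log(u/k)\,F(u)\dd u$. Comparing this with Lemma \ref{Lem:ordinaryMM}, which asserts $\m(Q_k) = \log(k) + \int_k^4 \log(u/k)\,F(u)\dd u$ for $0 < k < 4$, identifies the quantity as $\m(Q_k) - \log(k)$ and completes the chain.

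Since every step cites an already-proven statement, there is no genuine obstacle here; the corollary is best viewed as a convenient repackaging of the foregoing lemmas. The only points demanding a little care are to keep the integration limits consistent when the two integrals are merged, and to note that the combination $\log(u)-\log(k)=\log(u/k)$ and the appeal to Lemma \ref{Lem:ordinaryMM} both rest on the standing hypothesis $0 < k < 4$.
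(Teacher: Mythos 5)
Your proof is correct and follows exactly the same route as the paper, which simply cites Lemmas \ref{Lem:ordinaryMM}, \ref{Lem26}, \ref{L10}, and \ref{L11} for the first line and Lemma \ref{L10} for the second; you have merely written out the substitutions explicitly. Nothing to add.
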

\begin{proof}
    For the first line we use Lemmas \ref{Lem:ordinaryMM}, \ref{Lem26}, \ref{L10}, and \ref{L11}. The second line is immediate from Lemma \ref{L10}. 
\end{proof}
\subsection{The Proof of Theorem \ref{thm:Qk}}
We recall the following notation from Theorem \ref{thm:Qk}
\begin{align*}
G(t) = \frac{1}{2 \pi }\pFq{2}{1}{\frac{1}{2}, -\frac{1}{2}}{1}{1-\frac{t^2}{16}},
\end{align*}
and for $0<k<4$ and $n$ a non-negative integer, we define $c_n(k)$ and $d_n(k)$ as
\begin{align*}
c_n(k) :=& \int_{k}^4 t^{2n} F(t) \dd t,\\
d_n(k) :=& \int_{k}^4 t^{2n} \log(t) F(t) \dd t. 
\end{align*}
\begin{lem}
\label{Lem:Lem34}
    Using the above notation, we have the following simplification for $\m_{\D}(Q_k)$
    in the range $0 < k < 4$: \[
\m_{\D}(Q_k) = \frac{9}{8k^2} + \log (k) - \frac{1}{2} +
\frac{k^2}{8} \m(Q_k) - \frac{k^2}{8} \log(k) - \frac{1}{32k^2}c_2(k) - \left(\frac{1}{8} +\frac{1}{4} \log(k) \right) c_1(k) + \frac{5k^2}{32} c_0(k) + \frac{1}{4} d_{1}(k).
\]
\end{lem}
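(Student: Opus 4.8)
The plan is to start from the integral representation in Lemma \ref{lem:mdform}, which for $0<k<4$ reads $\m_\D(Q_k) = \frac{9}{8k^2} - \frac12 + \log(k) + I$, where $I = \int_k^4 p_U(t)\bigl(\log t - \frac{t^2}{2k^2} + \frac12 - \log k\bigr)\dd t$. Substituting $p_U(t) = t\,y_0(t)$ from Lemma \ref{Lem26} and splitting according to the three types of terms in the weight, it suffices to evaluate the three integrals $\int_k^4 t\,y_0(t)\log t\,\dd t$, $\int_k^4 t^3 y_0(t)\,\dd t$, and $\int_k^4 t\,y_0(t)\,\dd t$, and to recombine them with coefficients $1$, $-\frac{1}{2k^2}$, and $\frac12 - \log k$ respectively.

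Each of these I would evaluate by integration by parts, exploiting the two structural facts about $y_0$ recorded earlier. First, by Lemma \ref{L10}, $\theta y_0(t) = t\,y_0'(t) = -\int_t^4 F(u)\,\dd u$, so differentiating $y_0$ produces the antiderivative $A(t) := \int_t^4 F(u)\,\dd u$, and differentiating $A$ returns $-F$; a second integration by parts against the monomial then yields precisely the integrals $\int_k^4 t^{2n}F(t)\,\dd t = c_n(k)$ and $\int_k^4 t^2\log(t) F(t)\,\dd t = d_1(k)$. Second, every boundary term at $t=4$ vanishes because $y_0(4)=0$ and $A(4)=0$ (equivalently $y_0'(4)=0$), while the boundary contributions at $t=k$ feed in the values $y_0(k) = \m(Q_k) - \log k$ (from the first line of Corollary \ref{pdcor}) and $A(k) = c_0(k)$. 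Carrying this out gives, after short computations, $\int_k^4 t\,y_0\,\dd t = -\frac{k^2}{2}y_0(k) - \frac{k^2}{4}c_0(k) + \frac14 c_1(k)$ and $\int_k^4 t^3 y_0\,\dd t = -\frac{k^4}{4}y_0(k) - \frac{k^4}{16}c_0(k) + \frac{1}{16}c_2(k)$.

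The main obstacle is the remaining integral $\int_k^4 t\,y_0(t)\log t\,\dd t$, which requires two nested integration-by-parts and the most delicate bookkeeping. I would first integrate by parts with $\dd v = t\,\dd t$, producing a boundary term $-\frac{k^2}{2}y_0(k)\log k$ together with $-\frac12\int_k^4 t\,y_0\,\dd t$ and $\frac12\int_k^4 t\,A(t)\log t\,\dd t$; the last integral is handled by a further integration by parts, which generates the $d_1(k)$ term and an $\int_k^4 t\,A\,\dd t$ already computed above. The crucial cancellation to watch for is that the $\log k$ appearing inside $y_0(k)$ and in the boundary terms recombines with the explicit factor $\frac12 - \log k$ multiplying $\int_k^4 t\,y_0\,\dd t$, so that every cross term of the form $\log(k)\,y_0(k)$ and $\log(k)\,c_0(k)$ cancels, leaving a clean linear combination of $c_0$, $c_1$, $c_2$, $d_1$, and $y_0(k)$.

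Finally I would collect the three evaluated integrals with their coefficients: the $y_0(k)$ contributions sum to $\frac{k^2}{8}y_0(k) = \frac{k^2}{8}\m(Q_k) - \frac{k^2}{8}\log k$, the $c_0(k)$ terms to $\frac{5k^2}{32}c_0(k)$, the $c_1(k)$ terms to $-\bigl(\frac18 + \frac14\log k\bigr)c_1(k)$, and one is left with $-\frac{1}{32k^2}c_2(k)$ and $\frac14 d_1(k)$. Adding back $\frac{9}{8k^2} - \frac12 + \log k$ then reproduces exactly the claimed formula for $\m_\D(Q_k)$.
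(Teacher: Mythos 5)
Your proposal is correct and follows essentially the same route as the paper: both start from Lemma \ref{lem:mdform}, integrate by parts twice using $\theta^2(p_U(t)/t)=tF(t)$ together with the vanishing of $y_0$ and $y_0'$ at $t=4$, and evaluate the boundary terms at $t=k$ via Corollary \ref{pdcor}. The only cosmetic difference is that you split the weight into three monomial pieces and integrate each by parts in the ordinary sense, whereas the paper keeps the weight intact and integrates by parts against the operator $\theta$; your intermediate evaluations of $\int_k^4 t\,y_0\,\dd t$ and $\int_k^4 t^3 y_0\,\dd t$ check out and recombine to the stated identity.
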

\begin{proof}
Using Lemma \ref{lem:mdform}, we can write
\[
\m_{\D}(Q_k) = \frac{9}{8k^2} - \frac{1}{2} + \log(k) + \int_{k}^4 p_{U}(t) \left( \log(t) - \frac{t^2}{2 k^2} + \frac{1}{2} - \log(k) \right) \dd t.
\]
Our goal is to simplify the integral using the differential equation \eqref{E1} for $p_{U}(t)$.
We perform integration by parts 
\begin{align*}
   & \int_{k}^4 \left(t \log(t)- \frac{t^3}{2k^2} + \frac{t}{2} - t\log(k) \right) p_{U}(t)/t  \dd t\\
&= \left. \left(\frac{t}{2} \log (t) -\frac{t^3}{8 k^2} - \frac{t}{2} \log (k)  \right)p_{U}(t) \right|^4_k - \int_{k}^4 \left(\frac{t}{2} \log (t) -\frac{t^3}{8 k^2} - \frac{t}{2} \log (k)  \right) \theta (p_{U}(t)/t)  \dd t\\
&= \frac{k}{8} p_{U}(k) - \int_{k}^4 \left(\frac{1}{2} t \log (t) -\frac{t^3}{8 k^2} - \frac{1}{2} t \log (k)  \right) \theta (p_{U}(t)/t)  \dd t,
    \end{align*}
where we have used Lemma \ref{Lem26} which implies that $p_\D(4)=0$. 
We perform integration by parts another time
\begin{align*}
   & \int_{k}^4 \left(\frac{t}{2} \log (t) -\frac{t^3}{8 k^2} - \frac{t}{2} \log (k)  \right) \theta (p_{U}(t)/t) \dd t\\ &= \left. \left( \frac{t^2}{4} \log(t) - \frac{t^2}{8} - \frac{t^4}{32 k^2}  - \frac{t^2}{4} \log(k) \right) \theta(p_{U}(t)/t) \right|^4_k -\int_{k}^4 \left( \frac{t}{4} \log(t) - \frac{t}{8} - \frac{t^3}{32 k^2}  - \frac{t}{4} \log(k) \right) \theta^2(p_{U}(t)/t)  \dd t \\
   &= \frac{5k^2}{32} \theta (p_{U}(t)/t)(k) - \int_{k}^4 \left( \frac{t}{4}\log(t) - \frac{t}{8} - \frac{t^3}{32 k^2}  - \frac{t}{4} \log(k) \right) t F(t)  \dd t\\
   &= \frac{5k^2}{32}  \theta (p_{U}(t)/t)(k) - \frac{1}{4}d_1(k) + \frac{1}{32k^2}c_2(k) + \left(\frac{1}{8} +\frac{1}{4} \log(k) \right) c_1(k),   
\end{align*}
where we have used Lemma \ref{Lem26} as well as Lemma \ref{L10} and the fact that $y_0'(4)=0$.
Combining all this and applying Corollary \ref{pdcor}, we find
\begin{align*}
\m_{\D}(Q_k) &= \frac{9}{8k^2} - \frac{1}{2} + \log(k) + \frac{k}{8}p_{U}(k) - \frac{1}{32k^2} c_2(k) -\left(\frac{1}{8} +\frac{1}{4} \log(k) \right) c_1(k) - \frac{5k^2}{32}\theta (p_{U}(t)/t)(k) + \frac{1}{4}d_{1}(k)   \\
 &= \frac{9}{8k^2} - \frac{1}{2} + \log(k) + \frac{k^2}{8}\m(Q_k) - \frac{k^2}{8}\log(k) - \frac{1}{32k^2} c_2(k) -\left(\frac{1}{8} +\frac{1}{4} \log(k) \right) c_1(k) + \frac{5k^2}{32}c_0(k)\\& \qquad + \frac{1}{4}d_1(k). 
\end{align*}
This is what we needed to show.
\end{proof}
    
Next, we wish to simplify the expression in Lemma \ref{Lem:Lem34}. We do this using the following lemma.
\begin{lem} \label{lem:crazymatrix}
    The following identities hold:
    \begin{align*}
0=&\frac{20}{k}c_0(k)-\frac{3(k^2+12)}{k^3}c_1(k)+\frac{4}{k^3}c_2(k)+\frac{8}{k}d_0(k)-\frac{2}{k}d_1(k)+(8-k^2) \log(k) F(k)+8\log(k)G(k),\\
0=&\frac{120}{k^3}c_0(k)-\frac{30(k^2+12)}{k^5}c_1(k)+\frac{40}{k^5}c_2(k)-5F(k)+ \frac{80}{k^2}G(k),\\
0=&\frac{8}{k}c_0(k)-\frac{2}{k}c_1(k)+(8-k^2)F(k)+8G(k).
\end{align*}  
\end{lem}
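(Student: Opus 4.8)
The plan is to base everything on the first-order system satisfied by $F$ and $G$. Writing $z=1-\frac{t^2}{16}$, the functions $\pi F$ and $\pi G$ are, up to normalization, the complete elliptic integrals $K(\sqrt z)$ and $E(\sqrt z)$, so the classical derivative formulas for $K$ and $E$ (equivalently, Gauss's contiguous relations together with $\frac{\dd}{\dd z}\,{}_2F_1(a,b;c;z)=\frac{ab}{c}\,{}_2F_1(a{+}1,b{+}1;c{+}1;z)$) translate, after the substitution $z=1-\frac{t^2}{16}$, into
\[
F'(t)=\frac{t^2F(t)-16\,G(t)}{t(16-t^2)},\qquad G'(t)=\frac{t\,(F(t)-G(t))}{16-t^2}.
\]
I would record at the same time the boundary values $F(4)=G(4)=\frac{1}{2\pi}$, which hold because $z=0$ at $t=4$.

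Next I would exhibit three explicit potentials that vanish at $t=4$ and whose derivatives are polynomial multiples of $F$ (or of $F$ and $G$):
\begin{align*}
\Phi_1(t)&=(8t-t^3)F(t)+8t\,G(t), & \Phi_1'(t)&=(8-2t^2)F(t),\\
\Phi_2(t)&=t\,(F(t)-G(t)), & \Phi_2'(t)&=F(t)-2G(t),\\
\Phi_3(t)&=\Big(\tfrac{t^5}{4}-3t^3\Big)F(t)-t^3G(t), & \Phi_3'(t)&=(t^4-9t^2)F(t).
\end{align*}
Each derivative identity is verified by a direct differentiation in which the first-order system forces every occurrence of $16-t^2$ in the denominator to cancel, while $\Phi_i(4)=0$ follows from $F(4)=G(4)$. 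Integrating from $k$ to $4$ then yields three primitive integral relations: $\Phi_1$ gives $8c_0-2c_1=-k(8-k^2)F(k)-8kG(k)$, which is exactly the third identity after dividing by $k$; $\Phi_3$ gives $c_2-9c_1=\big(3k^3-\tfrac{k^5}{4}\big)F(k)+k^3G(k)$; and $\Phi_2$ gives $\int_k^4 G(t)\,\dd t=\tfrac12 c_0+\tfrac{k}{2}(F(k)-G(k))$.

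The second identity is then a pure linear-algebra consequence: clearing denominators (multiplying by $\tfrac{k^5}{5}$) turns it into $24k^2c_0-6(k^2+12)c_1+8c_2-k^5F(k)+16k^3G(k)=0$, and substituting $c_2$ from the $\Phi_3$-relation and $c_0$ from the third identity makes every term cancel. For the first identity the only extra ingredient is one integration by parts: writing $8d_0-2d_1=\int_k^4\log t\,\dd\Phi_1(t)$ and integrating by parts (the $t=4$ endpoint drops out because $\Phi_1(4)=0$) produces $-\log k\,\Phi_1(k)-\int_k^4\big((8-t^2)F+8G\big)\,\dd t$, and the stray $\int_k^4 G$ is eliminated using the $\Phi_2$-relation. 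Substituting this back into the first identity cancels the two $\log k$ boundary terms and leaves $\tfrac{8}{k}c_0-\tfrac{2(k^2+18)}{k^3}c_1+\tfrac{4}{k^3}c_2-4F(k)+4G(k)=0$, which once more collapses to $0$ upon inserting the $\Phi_3$-relation and the third identity.

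The main obstacle is discovering the correct potentials $\Phi_1,\Phi_2,\Phi_3$, that is, guessing the polynomial coefficients so that, after applying the first-order system, the $16-t^2$ denominators and all the $G$-terms collapse to a clean polynomial-times-$F$ right-hand side (and so that the potentials vanish at the regular endpoint $t=4$). Once these are in hand the verifications are routine differentiations and bookkeeping of powers of $k$, and the only genuinely analytic step is the single integration by parts handling the logarithmic integrals $d_0$ and $d_1$.
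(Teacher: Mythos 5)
Your proof is correct, and it takes a genuinely different route from the paper's. You work directly with the first-order system $F'(t)=\frac{t^2F(t)-16G(t)}{t(16-t^2)}$, $G'(t)=\frac{t(F(t)-G(t))}{16-t^2}$ (which does follow from the classical derivative formulas for $K$ and $E$ under $z=1-\frac{t^2}{16}$), and you construct three closed-form potentials $\Phi_1,\Phi_2,\Phi_3$ vanishing at $t=4$ whose derivatives are exactly the integrands needed. I verified all three derivative computations and the boundary values: the third identity is literally $-\Phi_1(k)/k$ rewritten; the second follows by eliminating $c_2$ via the $\Phi_3$-relation and then invoking the third identity; and the first needs only the single integration by parts you describe, after which the logarithmic boundary terms cancel and the remainder, namely $\frac{8}{k}c_0-\frac{2(k^2+18)}{k^3}c_1+\frac{4}{k^3}c_2-4F(k)+4G(k)$, again collapses onto the third identity. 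The paper proceeds quite differently: it introduces the auxiliary integrals $c_n'(k)=\int_k^4 t^{2n}G(t)\,\dd t$ and $d_n'(k)=\int_k^4 t^{2n}\log(t)G(t)\,\dd t$, derives four parametric integration-by-parts recurrences from two antiderivative identities for the underlying hypergeometric functions, assembles ten linear relations into a $10\times 14$ matrix $B$, and exhibits explicit vectors $v_1,v_2,v_3$ whose products with $B$ yield the three identities. Your approach buys a shorter and more conceptual argument that never leaves the quantities actually appearing in the statement and makes transparent that the first two identities are consequences of the third together with the $\Phi_2$- and $\Phi_3$-relations; the paper's approach buys a systematic, mechanizable search: the matrix formulation finds any linear relation in that span without having to guess the potentials, which, as you rightly flag, is the one non-routine step in your version.
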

\begin{proof}
We first introduce the following notation
\begin{align*}
c'_n(k):=& \int_{k}^4 t^{2n} G(t) \dd t, \\
d'_n(k) :=& \int_{k}^4 t^{2n} \log(t) G(t) \dd t.
\end{align*}

    We will make use of the following identities
    \[
    \frac{\dd}{\dd t} \left( 2 \cdot \pFq{2}{1}{\frac{1}{2}, -\frac{1}{2}}{1}{t}
    + 2(t - 1)  \cdot 
    \pFq{2}{1}{\frac{1}{2}, \frac{1}{2}}{1}{t}
   \right) = \pFq{2}{1}{\frac{1}{2}, \frac{1}{2}}{1}{t}
    \]
    and
\[
    \frac{\dd}{\dd t} \left( \frac{2}{3}(t + 1) \cdot 
    \pFq{2}{1}{\frac{1}{2}, -\frac{1}{2}}{1}{t}
     + \frac{2}{3}(t - 1)  \cdot 
     \pFq{2}{1}{\frac{1}{2}, \frac{1}{2}}{1}{t}
    \right) =\pFq{2}{1}{\frac{1}{2}, -\frac{1}{2}}{1}{t}.
    \]

Applying integration by parts and employing the above identities, we can directly 
derive the following four relations:
\begin{align*}
\frac{c_n(k)}{k^{2n+1}} &= \frac{16}{k^2} G(k) - F(k) - \frac{2n - 1}{k^{2n+1}}(c_n(k) - 16c'_{n-1}(k)), \\
\frac{d_n(k)}{k^{2n+1}} &= \frac{16c'_{n-1}(k) - c_n(k)}{k^{2n+1}} - \frac{2n-1}{k^{2n+1}}(d_n(k) - 16 d'_{n-1}(k))+\log(k)\left(\frac{16}{k^2}G(k)-F(k)\right),\\
\frac{c_n'(k)}{k^{2n+1}} &= \frac{1}{3}\left(\frac{32}{k^2}-1\right) G(k) -\frac{1}{3} F(k) - \frac{2n - 1}{3k^{2n+1}}(c_n(k) +c'_n(k) -32c'_{n-1}(k)),\\
\frac{d_n'(k)}{k^{2n+1}} &= 
-\frac{1}{3k^{2n+1}}(c_n(k) +c'_n(k) -32c'_{n-1}(k))-\frac{2n-1}{3k^{2n+1}}(d_n(k) +d'_n(k) -32d'_{n-1}(k))\\
&+\frac{\log(k)}{3}\left[\left(\frac{32}{k^2}-1\right) G(k) - F(k)\right].\\
\end{align*}
From these, we collect all linear relations containing only elements from
\[
\{c_0(k), c_1(k), c_2(k), d_0(k), d_1(k), c'_{-1}, c'_0(k), c'_1(k), c'_2(k),d'_{-1}(k), d'_0(k), d'_1(k), F(k), G(k)\}.
\]
This gives a total of ten linear relations, which we can summarize in a matrix (keeping the logical order):
\[
B = \left(\begin{array}{rrrrrrrrrrrrrr}
0 & 0 & 0 & 0 & 0 & -\frac{16}{k} & 0 & 0 & 0 & 0 & 0 & 0 & -1 & \frac{16}{k^2}\\[6pt]
0 & -\frac{2}{k^3} & 0 & 0 & 0 & 0 & \frac{16}{k^3} & 0 & 0 & 0 & 0 & 0 & -1 & \frac{16}{k^2}\\[6pt]
0 & 0 & -\frac{4}{k^5} & 0 & 0 & 0 & 0 & \frac{48}{k^5} & 0 & 0 & 0 & 0 & -1 & \frac{16}{k^2}\\[6pt]
-\frac{1}{k} & 0 & 0 & 0 & 0 & \frac{16}{k} & 0 & 0 & 0 & -\frac{16}{k} & 0 & 0 & -\log(k) & \log(k) \frac{16}{k^2}\\[6pt]
0 & -\frac{1}{k^3} & 0 & 0 & -\frac{2}{k^3} & 0 & \frac{16}{k^3} & 0 & 0 & 0 & \frac{16}{k^3} & 0 & -\log(k) & \log(k) \frac{16}{k^2}\\[6pt]
\frac{1}{3 k} & 0 & 0 & 0 & 0 & -\frac{32}{3 k} & -\frac{2}{3k} & 0 & 0 & 0 & 0 & 0 & -\frac{1}{3} & \frac{1}{3} \left(\frac{32}{k^2} - 1\right)\\[6pt]
0 & -\frac{1}{3k^3} & 0 & 0 & 0 & 0 & \frac{32}{3k^3} & -\frac{4}{3k^3} & 0 & 0 & 0 & 0 & -\frac{1}{3} & \frac{1}{3} \left(\frac{32}{k^2} - 1\right)\\[6pt]
0 & 0 & -\frac{1}{k^5} & 0 & 0 & 0 & 0 & \frac{32}{k^5} & \frac{-2}{k^5} & 0 & 0 & 0 & -\frac{1}{3} & \frac{1}{3} \left(\frac{32}{k^2} - 1\right)\\[6pt]
-\frac{1}{3k} & 0 & 0 & \frac{1}{3k} & 0 & \frac{32}{3k} & -\frac{1}{3k} & 0 & 0 & -\frac{32}{3k} & -\frac{2}{3k} & 0 & -\frac{\log(k)}{3} & \frac{\log(k)}{3} \left(\frac{32}{k^2} - 1 \right)\\[6pt]
0 & -\frac{1}{3 k^3} & 0 & 0 & -\frac{1}{3k^3} & 0 & \frac{32}{3 k^3} & -\frac{1}{3k^3} & 0 & 0 & \frac{32}{3k^3} & -\frac{4}{3k^3} & -\frac{\log(k)}{3} & \frac{\log(k)}{3} \left(\frac{32}{k^2} - 1 \right)
\end{array}\right).
\]
Now the identities can be proved using linear algebra. For instance if \[v_1 = (-24, 24 + k^2, -k^2, -16, k^2, 36, -36, 0, 24, 0),\] an easy computation shows 
\[
v_1 B = \left(\frac{20}{k},-3\frac{12+k^2}{k^3},\frac{4}{k^3},\frac{8}{k},-\frac{2}{k},0,0,0,0,0,0,0,(8 - k^2) \log(k),8 \log(k) \right).\]
Which proves the first identity. The other two identities can be proved using the vectors
\[
v_2 = \left ( -\frac{240}{k^2},\frac{15 \left(k^2+16\right)}{k^2},-10,0,0,\frac{360}{k^2},-\frac{360}{k^2},0,0,0\right )
\]
and
\[
v_3 = \left (-16,k^2,0,0,0,24,0,0,0,0\right)
\]
respectively. 

\end{proof}

\begin{proof}[Proof of Theorem \ref{thm:Qk}]
Using the identities from Lemma \ref{lem:crazymatrix} as well as the fact that $\m(Q_k)=(1-c_0(k))\log(k)+d_0(k)$ for $0<k<4$, which follows from Lemma \ref{Lem:ordinaryMM}, the result of Lemma \ref{Lem:Lem34} simplifies to 
\begin{align*}
    \m_{\D}(Q_k) = &\frac{9}{8k^2} - \frac{1}{2} +
\frac{k^2 + 8}{8} \m(Q_k) - \frac{k^2}{8}\log(k) + c_0(k)\left(\frac{1}{2} - \frac{9}{8 k^2} + \frac{5 k^2}{32} \right)\\
    &+ F(k) \left(-\frac{9}{8 k} - \frac{29 k}{64} + \frac{17 k^3}{128} \right) + G(k) \left(-\frac{9}{8 k} - \frac{49 k}{32}\right),
\end{align*}
which completes the proof of Theorem \ref{thm:Qk}.
\end{proof}

\kommentar{\begin{thm} 
If $k\geq 4$, we have 
    \[
   \m_{\D}(Q_k) - \frac{k^2 + 8}{8} \m (Q_k)= \frac{9}{8k^2} - \frac{1}{2} - \frac{k^2}{8} \log(k).
    \]
If $0<k<4$, we have 
    \begin{align}
    \m_{\D}(Q_k) - \frac{k^2 + 8}{8} \m (Q_k)=&\frac{9}{8k^2} - \frac{1}{2} - \frac{k^2}{8}\log(k) + c_0(k)\left(\frac{1}{2} - \frac{9}{8 k^2} + \frac{5 k^2}{32} \right) \label{eq:34}\\
    &+ F(k) \left(-\frac{9}{8 k} - \frac{29 k}{64} + \frac{17 k^3}{128} \right) + G(k) \left(-\frac{9}{8 k} - \frac{49 k}{32}\right). \nonumber
    \end{align}
\end{thm}}
\section{Further understanding of the main results}\label{sec:vol}
In this section we provide some further discussions on the formulas that were obtained in Theorems \ref{thm:x+y+k} and \ref{thm:Qk}. 

\subsection{Volumes of Deninger cycles}
An important observation about Theorem \ref{thm:Qk} is that formula \eqref{eq:34} for the case $0<k<4$ contains the term $c_0(k)$,  that is not likely to simplify to an elementary or hypergeometric function. However $c_0(k)$ has a geometric interpretation as a volume associated to the Deninger cycle (see page 102 in \cite{BrunaultZudilin}). 
\begin{prop}
\label{prop:Deninger}
    Let $0 < k < 4$ and let $D_{Q_k, z} = \{ |x| = |y| = 1, |z| \geq 1  \, :\,  Q_k(x,y,z) = 0 \}$ be the Deninger cycle of $Q_k(x,y,z)$ with respect to $z$. Then $c_0(k) = \textnormal{vol}(D_{Q_k, z})$, that is, the volume (area) of $D_{Q_k, z}$.
\end{prop}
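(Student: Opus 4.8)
The plan is to parametrize $D_{Q_k,z}$ explicitly and to recognize $c_0(k)$ as the measure of its projection onto the $(x,y)$-torus. First I would solve the defining equation $Q_k(x,y,z)=(x+1)(y+1)+kz=0$ for $z$, obtaining $z=-f(x,y)/k$ with $f(x,y)=(x+1)(y+1)$. On the cycle we have $|x|=|y|=1$, so $z$ is completely determined by $(x,y)$, and the remaining condition $|z|\ge 1$ translates into $|f(x,y)|\ge k$. Hence $D_{Q_k,z}$ is exactly the graph of the map $(x,y)\mapsto\bigl(x,y,-f(x,y)/k\bigr)$ over the base region $R_k:=\{(x,y)\in\TT^2:|f(x,y)|\ge k\}$, and the projection $(x,y,z)\mapsto(x,y)$ restricts to a bijection of $D_{Q_k,z}$ onto $R_k$.

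Next I would use that the volume of the Deninger cycle is measured by the natural Haar measure pulled back from the torus coordinates, so that
\[
\mathrm{vol}(D_{Q_k,z})=\frac{1}{(2\pi i)^2}\int_{R_k}\frac{\dd x}{x}\frac{\dd y}{y}=\int_{\{(\theta_1,\theta_2)\in[0,1)^2\,:\,|f(e^{2\pi i\theta_1},e^{2\pi i\theta_2})|\ge k\}}\dd\theta_1\,\dd\theta_2.
\]
Writing $X=e^{2\pi i\theta_1}$ and $Y=e^{2\pi i\theta_2}$ for $\theta_1,\theta_2$ uniform on $[0,1)$, exactly as in Lemma \ref{Lem:ordinaryMM}, this integral is the probability $\PP(|f(X,Y)|\ge k)$. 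Since $F(t)$ is the probability density of $U=|f(X,Y)|$ on $[0,4]$—this is the identity $p(t)=F(t)$ invoked in Lemma \ref{Lem:ordinaryMM}, coming from \cite[Proof of Thm 6.2]{BrunaultZudilin}—I obtain, for $0<k<4$ and using that $|f|$ takes values in $[0,4]$,
\[
\mathrm{vol}(D_{Q_k,z})=\PP(|f(X,Y)|\ge k)=\int_k^4 F(t)\,\dd t=c_0(k),
\]
which is the claimed identity.

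The main obstacle here is conceptual rather than computational: one must pin down the correct normalization for the ``volume'' of the Deninger cycle and confirm that the projection onto the $(x,y)$-torus is measure-preserving for that normalization. With the Euclidean metric induced from $\C^3$ the area would acquire a Jacobian factor from the variation of the $z$-coordinate; the relevant convention, however, is that the cycle carries the measure $\dd\theta_1\,\dd\theta_2$ inherited from the torus parametrization, which is the standard choice for Deninger cycles written in torus coordinates (see page 102 of \cite{BrunaultZudilin}). Once this normalization is fixed, every remaining step is immediate, resting only on the already-established fact that $F$ is the density of $|f(X,Y)|$.
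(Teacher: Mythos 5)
Your proposal is correct and follows essentially the same route as the paper: both identify $D_{Q_k,z}$ with the region $\{(x,y)\in\TT^2 : |(x+1)(y+1)|\ge k\}$ via the condition $|z|=|f(x,y)|/k\ge 1$, take the volume to be the Haar measure of that region, and invoke the fact from \cite[Proof of Thm 6.2]{BrunaultZudilin} that $F$ is the density of $|f(X,Y)|$ to conclude $\textnormal{vol}(D_{Q_k,z})=\int_k^4 F(t)\,\dd t=c_0(k)$. Your explicit remark about the normalization convention (Haar measure in torus coordinates rather than induced Euclidean area) is a point the paper treats implicitly, and it is resolved exactly as you say.
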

\begin{proof}
    Using \cite[Proof of Thm 6.2]{BrunaultZudilin}, it follows that $F(t)$ is the probability of the random walk $(x+1)(y+1)$, where $X$ and $Y$ are independent random variables uniformly distributed on the complex unit circle. 
    We can rewrite $D_{Q_k, z} = \{|x| = |y| = 1 \, : \, k \leq |(x+1)(y+1)| \}$. It follows that
    \[
    \textnormal{vol}(D_{Q_k, z}) = \frac{1}{(2 \pi i)^2}\int_{\substack{\TT^2 \\ k \leq |(x+1)(y+1)|}} \frac{\dd x}{x} \frac{\dd y}{y} = \int_{k}^4 F(t) \, \dd t = c_0(k).
    \]
This completes the proof.
\end{proof}

If we compute the volumes in Proposition \ref{prop:Deninger} of the Deninger cycles $D_{Q_k,x}$ and $D_{Q_k,y}$ with respect to the other variables we get a similar result.

\begin{prop}
    Using the notation from Proposition \ref{prop:Deninger}. We have 
    \[
    \textnormal{vol}(D_{Q_k, x}) = \textnormal{vol}(D_{Q_k, y}) = 1 - \frac{1}{2} c_0(k).
    \]
\end{prop}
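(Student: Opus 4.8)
The plan is to reduce the statement to a probabilistic computation and then match it against the double integral that defines $c_0(k)$. Since $Q_k$ is symmetric in $x$ and $y$, the cycles $D_{Q_k,x}$ and $D_{Q_k,y}$ are exchanged by swapping these two variables, so they have equal volume and it suffices to treat $D_{Q_k,x}=\{|y|=|z|=1,\ |x|\ge 1\ :\ Q_k(x,y,z)=0\}$. Solving $Q_k=0$ for $x$ on the cycle gives $x=-1-\frac{kz}{y+1}$, and, exactly as in Proposition \ref{prop:Deninger}, the volume is the Haar measure of the set of $(y,z)\in\TT^2$ for which the resulting $x$ satisfies $|x|\ge 1$. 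Writing $Y,Z$ for independent random variables uniform on the unit circle, this is $\textnormal{vol}(D_{Q_k,x})=\PP\big(|x|\ge 1\big)$ with $x=-1-\frac{kZ}{Y+1}$.

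Next I would condition on $Y$. For fixed $Y$ the point $x+1=-\frac{kZ}{Y+1}$ runs, as $Z$ ranges over the circle, uniformly in argument over the circle of radius $r:=\frac{k}{|Y+1|}$ centered at the origin. The event $|x|<1$ is precisely the event that $x+1$ lies in the open disk $|w-1|<1$, and a short geometric computation ($|re^{i\phi}-1|^2<1\iff \cos\phi>r/2$) shows the conditional probability equals $\frac{1}{\pi}\arccos\!\big(\tfrac{r}{2}\big)$ when $r<2$ and $0$ when $r\ge 2$. Using that $|Y+1|$ has density $\frac{2}{\pi\sqrt{4-s^2}}$ on $[0,2]$ (write $Y=e^{i\alpha}$, so $|Y+1|=2|\cos(\alpha/2)|$ and change variables), and noting $r<2\iff|Y+1|>k/2$, integrating the complementary probability over the full range $[0,2]$ yields
\[
\textnormal{vol}(D_{Q_k,x})=1-\frac{2}{\pi^2}\int_{k/2}^{2}\frac{\arccos\!\big(\tfrac{k}{2s}\big)}{\sqrt{4-s^2}}\,\dd s .
\]

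Finally I would compute $c_0(k)$ directly from its probabilistic meaning. By Proposition \ref{prop:Deninger}, $c_0(k)=\PP\big(|X+1|\,|Y+1|\ge k\big)$ with $X,Y$ independent and uniform on the circle, each of $|X+1|,|Y+1|$ carrying the density above. Conditioning on $B=|Y+1|$ forces $|X+1|\ge k/B$, which is possible only when $B\ge k/2$; the inner integral $\int_{k/B}^{2}\frac{2}{\pi\sqrt{4-a^2}}\,\dd a$ evaluates via $\arcsin$ and the identity $\arcsin t+\arccos t=\tfrac{\pi}{2}$ to $\frac{2}{\pi}\arccos\!\big(\tfrac{k}{2B}\big)$, so that
\[
c_0(k)=\frac{4}{\pi^2}\int_{k/2}^{2}\frac{\arccos\!\big(\tfrac{k}{2s}\big)}{\sqrt{4-s^2}}\,\dd s .
\]
Comparing the two displays gives $\textnormal{vol}(D_{Q_k,x})=1-\tfrac{1}{2}c_0(k)$, and symmetry finishes the proof. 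The one delicate point—the main obstacle—is the bookkeeping linking the two conditional integrals: one must check that the geometric conditional probability (an $\arccos$ coming from a circle-in-a-disk fraction) lines up, via $\arcsin t+\arccos t=\tfrac\pi2$, with the inner $\arcsin$-integral produced by the density of $|X+1|$, so that the single $\arccos$-integral governing $\textnormal{vol}(D_{Q_k,x})$ is exactly half of the one governing $c_0(k)$.
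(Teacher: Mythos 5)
Your proof is correct, but it takes a genuinely different route from the paper's. The paper never evaluates an integral: it rewrites the defining condition $\bigl|1+\tfrac{kz}{1+y}\bigr|\ge 1$ as $z+\tfrac1z+\tfrac yz+\tfrac zy\ge -k$, uses the antisymmetry of this real quantity under $(z,v)\mapsto(-z,-v)$ (with $v=z/y$) to convert the one-sided inequality into "all of the torus minus half of the set where $\bigl|z+\tfrac1z+v+\tfrac1v\bigr|\ge k$", and then the measure-preserving substitution $u=zv$, $w=z/v$ identifies that last set with $\{|(1+u)(1+w)|\ge k\}$, whose measure is $c_0(k)$ by Proposition \ref{prop:Deninger}. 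You instead compute both sides explicitly: conditioning on $Y$, the circle-in-a-disk computation gives the conditional probability $\tfrac1\pi\arccos\bigl(\tfrac{r}{2}\bigr)$ with $r=k/|Y+1|$, and combined with the density $\tfrac{2}{\pi\sqrt{4-s^2}}$ of $|Y+1|$ (which is the paper's density of $S_1=|X+Y|$ on the torus) you land on $1-\tfrac{2}{\pi^2}\int_{k/2}^{2}\frac{\arccos(k/(2s))}{\sqrt{4-s^2}}\,\dd s$; a parallel conditioning computation expresses $c_0(k)$ as twice that integral, and the two displays match. All the individual steps check out (the event $|x|\ge1$ does have probability equal to $\mathrm{vol}(D_{Q_k,x})$ since $Q_k$ is linear in $x$, the $\arcsin$/$\arccos$ bookkeeping is right, and the boundary sets are null). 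What each approach buys: the paper's argument is structural and explains the factor $\tfrac12$ as coming from an antisymmetry, with no integration; yours is elementary and self-contained, and as a byproduct produces the closed single-integral formula $c_0(k)=\tfrac{4}{\pi^2}\int_{k/2}^{2}\frac{\arccos(k/(2s))}{\sqrt{4-s^2}}\,\dd s$, which is not stated elsewhere in the paper.
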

\begin{proof}
    By the symmetry of $Q_k$, it suffices to compute $\textnormal{vol}(D_{Q_k, x})$. We have
    \[D_{Q_k,x} = \{ |y| = 1, \, |z| = 1 \,:\, \left| 1 + \frac{k z}{1+y}\right| \geq 1\} .\]
Note that $\left| 1 + \frac{k z}{1+y}\right| \geq 1$ is equivalent to $\left| 1 + y + k z\right| \geq \left| 1 + y \right|$, which is equivalent to $z + \frac{1}{z} + \frac{y}{z} + \frac{z}{y} \geq - k$, since $y$ and $z$ lie on the unit circle.
 After a change of variables $v = \frac{z}{y}$, we can write 
\begin{align*}
    \textnormal{vol}(D_{Q_k,x}) = \frac{1}{(2 \pi i)^2} \int_{\substack{\TT^2 \\-k \leq z + \frac{1}{z} + v + \frac{1}{v}}} \frac{\dd z}{z} \frac{\dd v}{v} &= \frac{1}{(2 \pi i)^2}\left(\int_{\substack{\TT^2 \\ -k \leq z + \frac{1}{z} + v + \frac{1}{v} \leq k}} + \int_{\substack{\TT^2 \\ k \leq z + \frac{1}{z} + v + \frac{1}{v}}} \right) \frac{\dd z}{z} \frac{\dd v}{v}\\
    &= \frac{1}{(2 \pi i)^2}\left(\int_{\substack{\TT^2 \\ \left| z + \frac{1}{z} + v + \frac{1}{v} \right| \leq k}} + \frac{1}{2} \int_{\substack{\TT^2 \\ k \leq \left| z + \frac{1}{z} + v + \frac{1}{v} \right|}} \right)  \frac{\dd z}{z} \frac{\dd v}{v}\\
    &= 1 - \frac{1}{2} \cdot \frac{1}{(2 \pi i)^2}\int_{\substack{\TT^2 \\ k \leq \left| z + \frac{1}{z} + v + \frac{1}{v} \right|}} \frac{\dd z}{z} \frac{\dd v}{v}.
\end{align*}
The second line follows from the anti-symmetry of  $z + \frac{1}{z} + v + \frac{1}{v}$:  $k \leq z + \frac{1}{z} + v + \frac{1}{v}$ if and only if  $(-z) + \frac{1}{(-z)} + (-v) + \frac{1}{(-v)} \leq -k$.
Next, it remains to notice that
\[\frac{1}{(2 \pi i)^2} \int_{\substack{\TT^2 \\ k \leq \left| z + \frac{1}{z} + v + \frac{1}{v} \right|}} \frac{\dd z}{z} \frac{\dd v}{v} = \frac{1}{(2 \pi i)^2} \int_{\substack{\TT^2 \\ k \leq \left|(1+u)(1+w) \right|}} \frac{\dd u}{u} \frac{\dd w}{w} = \textnormal{vol}(D_{Q_k,z}) = c_0(k),\]
again with a change of variables $u = zv, w = \frac{z}{v}$, and for the last equality we used Proposition \ref{prop:Deninger}.
We conclude with \[\textnormal{vol}(D_{Q_k, x})  = 1 - \frac{1}{2} c_0(k). \]
\end{proof}


Similarly, the term $\frac{\arccos(\frac{k}{2})}{\pi}$ in Theorem \ref{thm:x+y+k} can be interpreted as a volume.
\begin{prop}
For $0 < k \leq 2$ and let $D_{x+y+k, y} = \{ |x|=1, |y| \geq 1  \, : \, x+y+k= 0 \}$ be the Deninger cycle of $x+y+k$ with respect to $y$, and define $D_{x+y+k, x}$ similarly with respect to $x$. Then 
    \[ \textnormal{vol}(D_{x+y+k,x}) =  \textnormal{vol}(D_{x+y+k,y}) = 1 - \frac{\arccos(\frac{k}{2})}{\pi}. \]
\end{prop}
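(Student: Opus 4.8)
The plan is to mirror the strategy of Proposition \ref{prop:Deninger}: realize the volume of the Deninger cycle as the normalized Haar measure of its projection onto the torus in the remaining variable. Since $D_{x+y+k,y}$ lives on the curve $\{x+y+k=0\}$ and is parametrized by the single coordinate $x$ on the unit circle (with $y=-(x+k)$ determined by the defining equation), I would set
\[
\textnormal{vol}(D_{x+y+k,y}) = \frac{1}{2\pi i}\int_{\substack{\TT \\ |x+k|\geq 1}}\frac{\dd x}{x},
\]
where the constraint $|y|\geq 1$ has been rewritten as $|x+k|\geq 1$.

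First I would parametrize $x = e^{i\phi}$ with $\phi\in(-\pi,\pi]$, so that $\frac{1}{2\pi i}\frac{\dd x}{x}=\frac{\dd\phi}{2\pi}$, and expand
\[
|x+k|^2 = 1 + 2k\cos\phi + k^2.
\]
The inequality $|x+k|\geq 1$ then reduces to $\cos\phi\geq -\frac{k}{2}$. Since $0<k\leq 2$ we have $-\frac{k}{2}\in[-1,0)$, so this cuts out the symmetric arc $|\phi|\leq \arccos\!\left(-\frac{k}{2}\right)$, whose normalized length is $\frac{1}{2\pi}\cdot 2\arccos\!\left(-\frac{k}{2}\right)=\frac{1}{\pi}\arccos\!\left(-\frac{k}{2}\right)$.

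Next I would invoke the elementary identity $\arccos\!\left(-\frac{k}{2}\right)=\pi-\arccos\!\left(\frac{k}{2}\right)$ to rewrite this as $1-\frac{\arccos(k/2)}{\pi}$, which is the claimed value. Finally, the equality $\textnormal{vol}(D_{x+y+k,x})=\textnormal{vol}(D_{x+y+k,y})$ follows immediately from the symmetry of $x+y+k$ under interchanging $x$ and $y$, which induces a measure-preserving bijection between the two cycles.

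There is no serious obstacle here; the calculation is elementary. The only points requiring care are the bookkeeping of the normalization — matching the $\frac{1}{2\pi i}\frac{\dd x}{x}$ convention used for the analogous three-variable cycle in Proposition \ref{prop:Deninger}, so that the total mass of $\TT$ is $1$ — and the branch of the inverse cosine, where one must confirm that $\arccos\!\left(\frac{k}{2}\right)\in[0,\pi)$ as stipulated in Remark \ref{rem:CM}, so that the reflection identity for $\arccos$ applies cleanly.
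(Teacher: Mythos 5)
Your proposal is correct and follows essentially the same route as the paper: reduce by the $x\leftrightarrow y$ symmetry to a single cycle, parametrize it by the unit-circle variable, translate the constraint $|{\cdot}+k|\geq 1$ into $\cos\phi\geq -\frac{k}{2}$, and measure the resulting arc as $\frac{\arccos(-k/2)}{\pi}=1-\frac{\arccos(k/2)}{\pi}$. The only cosmetic difference is that the paper computes $D_{x+y+k,x}$ directly (integrating in $y$) whereas you compute $D_{x+y+k,y}$ (integrating in $x$), which are interchanged by the same symmetry you invoke at the end.
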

\begin{proof}
    By symmetry it suffices to compute $\textnormal{vol}(D_{x+y+k,x})$. We have that
    \[
    D_{x+y+k,x} = \{|y| = 1 \,: \, |y+k | \geq 1\}.
    \]
Note that  $|y + k| \geq 1$ is equivalent to $y + \frac{1}{y} \geq -k$. Upon writing $y = e^{2 \pi i t}$, it follows that
\begin{align*}
\textnormal{vol}(D_{x+y+k,x}) = \frac{1}{2 \pi i }\int_{\substack{\TT \\ |k + y| \geq 1}} \frac{\dd y}{y} &= \int_{\substack{0 \leq t \leq 1 \\ \cos(2 \pi t) \geq -\frac{k}{2}}} \dd t \\
&=\frac{\arccos(-\frac{k}{2})}{\pi} = 1 - \frac{\arccos(\frac{k}{2})}{\pi}.
\end{align*}
\end{proof}

\subsection{Modular interpretation of $c_0(k)$}

The aim of this subsection is to gain further understanding of the quantity $c_0(k)$ using modular forms.  
Let $q = e^{2 \pi i \tau}$.  Then the Dedekind eta function is defined by
\[
\eta(\tau) = e^{\pi i \tau/12} \prod_{n = 1}^\infty (1 - q^n).
\]
We consider \[x(\tau) = 
16 \left( \frac{\eta(2 \tau) \eta(8 \tau)^2}{\eta(4 \tau)^3} \right)^4 = 16q - 64q^3 + 224q^5 - 640q^7 + 1616q^9 + \cdots.
\]
This is a modular function for the group $\Gamma_1(8)$ and it maps the imaginary interval $(+ i \infty, 0)$ bijectively to the real interval $(0,4)$, see \cite[p. 437]{StraubZudilin}. Our result is the following.
\begin{thm}
\label{thm:modular}
Let $0 < k < 4$ and suppose $\tau_k = i t_k  \in i \R_{>0}$ is such that $x(-1/(4\tau_k)) = k$. Then
\[
c_0(k) = \frac{8 i}{\pi^3} \sum_{\substack{m, n \in \Z \\ m, n \textnormal{ odd}}}\frac{e^{-2 \pi i m/4}}{(m\tau_k + n)^2 m}.
\]
\end{thm}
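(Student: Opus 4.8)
The plan is to start from the analytic expression $c_0(k)=\int_k^4 F(t)\,\dd t$ and transport this integral into the upper half-plane through the Fricke-twisted Hauptmodul $\tilde x(\tau):=x(-1/(4\tau))$. Since $x$ maps the imaginary axis $(i\infty,0)$ bijectively onto $(0,4)$, the function $\tilde x$ sends the vertical geodesic $\{\tau=it:t\ge t_k\}$ onto the interval $[k,4]$: indeed $\tilde x(\tau_k)=x(-1/(4\tau_k))=k$ by hypothesis, while $\tilde x(i\infty)=x(0)=4$. After this change of variables the integral becomes
\[
c_0(k)=\int_{\tau_k}^{i\infty}F\bigl(\tilde x(\tau)\bigr)\,\frac{\dd\tilde x}{\dd\tau}\,\dd\tau,
\]
the path running up the imaginary axis from $\tau_k$ to $i\infty$.

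The crux is to identify this integrand as a weight-$3$ Eisenstein series. The factor $F(\tilde x(\tau))=\tfrac{1}{2\pi}\,{}_2F_1(\tfrac12,\tfrac12;1;1-\tilde x(\tau)^2/16)$ should, by the classical correspondence between ${}_2F_1(\tfrac12,\tfrac12;1;\cdot)$ and theta functions specialized to the $\Gamma_1(8)$ parametrization of \cite{StraubZudilin}, be a weight-$1$ modular form in $\tau$; here the Fricke involution $W_4$ plays an essential role, exchanging the two cusps so that the logarithmically singular period (the argument $1-\tilde x^2/16\to 1$ at the cusp $0$) is carried to the holomorphic period $F(\tilde x(\tau))$ at $i\infty$, where $1-\tilde x^2/16\to 0$. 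On the other hand $\tilde x$ is a modular function, so $\dd\tilde x/\dd\tau$ is a weight-$2$ form, and the product is a weight-$3$ form for $\Gamma_0(4)$ with the odd character $\chi_{-4}$. Since the relevant space of Eisenstein series is finite-dimensional, I would pin the product down by matching finitely many $q$-coefficients (or the corresponding eta-quotient expansions), obtaining
\[
F\bigl(\tilde x(\tau)\bigr)\,\frac{\dd\tilde x}{\dd\tau}=\frac{16\,i}{\pi^3}\sum_{\substack{m,n\in\Z\\ m,n\ \mathrm{odd}}}\frac{e^{-2\pi i m/4}}{(m\tau+n)^3},
\]
where the lattice conditions ``$m,n$ odd'' and the phase $e^{-2\pi i m/4}=-i\,\chi_{-4}(m)$ encode the level-$8$ structure and the character, matching the summation over odd integers characteristic of the Jacobi theta functions.

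Because the weight exceeds $2$, this Eisenstein series converges absolutely and uniformly on the compact portions of the path (and decays at $i\infty$), so term-by-term integration is legitimate. For each pair with $m$ odd, hence $m\neq0$, the elementary antiderivative gives
\[
\int_{\tau_k}^{i\infty}\frac{\dd\tau}{(m\tau+n)^3}=\left[\frac{-1}{2m(m\tau+n)^2}\right]_{\tau_k}^{i\infty}=\frac{1}{2m(m\tau_k+n)^2},
\]
the upper endpoint vanishing since $\im(m\tau)\to\infty$. Summing, the integral collapses to
\[
c_0(k)=\frac{16\,i}{\pi^3}\sum_{\substack{m,n\ \mathrm{odd}}}\frac{e^{-2\pi i m/4}}{2m(m\tau_k+n)^2}=\frac{8\,i}{\pi^3}\sum_{\substack{m,n\ \mathrm{odd}}}\frac{e^{-2\pi i m/4}}{(m\tau_k+n)^2\,m},
\]
which is the asserted formula.

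I expect the main obstacle to be the middle step: establishing the exact Eisenstein identity, including the precise constant $16i/\pi^3$, the character, and the phase $e^{-2\pi i m/4}$. This requires combining the Picard–Fuchs/period description of $F(t)$ with the explicit eta-quotient for $x(\tau)$ from \cite{StraubZudilin}, carefully tracking how $W_4$ interchanges the two periods, and then recognizing the resulting weight-$3$ eta/theta expression as the stated lattice sum. Once this identification is secured, the convergence and term-by-term integration are routine.
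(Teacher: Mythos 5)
Your strategy is essentially the paper's, reorganized at the final step. The paper also pulls $c_0(k)=\int_k^4 F(t)\dd t$ back along the modular parametrization: it uses the explicit eta-quotients $F(x(\tau))=\frac{2\tau}{\pi i}\frac{\eta(4\tau)^{10}}{\eta(2\tau)^4\eta(8\tau)^4}$ and $\dd x(\tau)=2\pi i\cdot 16\bigl(\eta(2\tau)^3\eta(8\tau)^2/\eta(4\tau)^4\bigr)^4\dd\tau$, applies the substitution $\tau=-1/(8\tau')$ so that the integrand becomes a holomorphic weight-$3$ eta-quotient at $i\infty$, identifies that eta-quotient with an explicit Eisenstein $q$-series, and integrates term by term to obtain a Lambert series for $c_0(k)$ (Lemma \ref{lem:modular}); it then separately converts the double lattice sum into the same Lambert series via the Lipschitz formula $\frac{1}{(2\pi i)^2}\sum_{n}(\tau+n)^{-2}=\sum_{r\ge1}rq^r$ and an inclusion--exclusion over the parity of $n$. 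You instead identify the integrand directly with the weight-$3$ lattice sum and integrate termwise over $(m,n)$. This is equivalent in content --- your claimed identity is precisely the $\tau$-derivative of the theorem, your antiderivative computation is right, and absolute convergence in weight $3$ does justify the interchange --- and it buys a cleaner endgame with no Lipschitz step and no Lambert series.

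The cost is that all the substance is concentrated in the one identity you explicitly defer. To "match finitely many $q$-coefficients" you must first know that $F(\tilde x(\tau))\,\dd\tilde x/\dd\tau$ is a holomorphic modular form at \emph{every} cusp of the relevant group, not just at $i\infty$: since $F(t)=\frac{1}{2\pi}\,{}_2F_1(\tfrac12,\tfrac12;1;1-t^2/16)$ has a logarithmic singularity at $t=0$, holomorphy at the cusp where $\tilde x\to0$ is exactly the nontrivial input, and it is what the Fricke substitution together with the explicit eta-quotient computation supplies in the paper's proof. You also need the correct level and character (your $\Gamma_0(4)$ is not right as stated; the $\eta(8\tau)$ factors force a larger level, and the paper works with $\Gamma_1(8)$ and its Fricke conjugate). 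Once those are pinned down, the coefficient matching is a legitimate finite verification and the rest of your argument goes through; as written, though, the middle step is an assertion rather than a proof, and discharging it amounts to redoing the eta-quotient manipulations of Lemma \ref{lem:modular}.
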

\begin{ex}
We apply Theorem \ref{thm:modular} to $\tau_k = i$. 
Using the special values of the Dedekind eta functions
\cite[p. 326]{Andrews} 
\[
\eta(i/2) = \frac{\Gamma(1/4)}{2^{7/8} \pi^{3/4}}; \qquad \eta(i) = \frac{\Gamma(1/4)}{2 \pi^{3/4}};\qquad \eta(2i)  = \frac{\Gamma(1/4)}{2^{11/8} \pi^{3/4}},
\]
we find that $x(i/4) = \sqrt{8}$. 
Theorem \ref{thm:modular} implies that
\[
c_0(\sqrt{8}) =  \frac{8 i}{\pi^3} \sum_{\substack{m, n \in \Z \\ m, n \textnormal{ odd}}}\frac{e^{-2 \pi i m/4}}{(m i + n)^2 m}.
\]
However, we are not aware of further simplifications in this case.
\end{ex}
\begin{rem} 
A priori, we need to take into consideration the order of summation in Theorem \ref{thm:modular}. However, the next lemma shows that the sum is independent of the order. 
\end{rem}

\begin{lem}
\label{Lem:absconv}
For each $\tau$ in the complex upper half-plane, the sum 
    \[\sum_{\substack{m, n \in \Z \\ m, n \textnormal{ odd}}}\frac{e^{-2 \pi i m/4}}{(m\tau+n)^2 m}\]
    converges absolutely.
\end{lem}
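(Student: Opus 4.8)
The plan is to reduce the claim to a purely real estimate and then compare it with a convergent Eisenstein-type series. Since $|e^{-2\pi i m/4}| = 1$, absolute convergence of the stated sum is equivalent to the finiteness of
\[
\sum_{\substack{m,n \in \Z \\ m,n \textnormal{ odd}}} \frac{1}{|m\tau+n|^2\,|m|}.
\]
The key preliminary observation is that because $m$ ranges only over odd integers we always have $m \neq 0$; thus the factor $1/|m|$ creates no singularity, and there is no $(m,n)=(0,0)$ term to worry about. The extra $1/|m|$ compared to a weight-$2$ Eisenstein series is exactly what makes the argument go through.

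First I would establish the standard positive-definiteness lower bound for the denominator. Writing $\tau = u + iv$ with $v>0$, we have $|m\tau+n|^2 = (mu+n)^2 + m^2 v^2$, a positive-definite quadratic form in the real variables $(m,n)$. Hence there is a constant $c=c(\tau)>0$ (the smaller eigenvalue of the associated symmetric matrix) with $|m\tau+n|^2 \geq c\,(m^2+n^2)$ for all $(m,n)\in\R^2$. This reduces matters to showing
\[
\sum_{\substack{m,n \textnormal{ odd}}} \frac{1}{(m^2+n^2)\,|m|} < \infty.
\]

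Next I would estimate the inner sum over $n$ for each fixed odd $m$ by comparison with an integral. Since $\int_{-\infty}^{\infty} \frac{\dd t}{m^2+t^2} = \pi/|m|$, one gets $\sum_{n\in\Z}\frac{1}{m^2+n^2} \leq \frac{C}{|m|}$ for an absolute constant $C$ (indeed this sum equals $\frac{\pi}{|m|}\coth(\pi|m|)$), and restricting to odd $n$ only decreases the value. Substituting this bound, the double sum is dominated by
\[
\sum_{m \textnormal{ odd}} \frac{1}{|m|}\cdot\frac{C}{|m|} = C\sum_{m \textnormal{ odd}} \frac{1}{m^2} < \infty,
\]
which gives absolute convergence.

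There is no genuinely hard step: this is the classical convergence argument for Eisenstein series, made easier by the extra factor $1/|m|$ and by the absence of the $m=0$ row. The only points requiring mild care are the existence of the uniform constant $c(\tau)$ (immediate from positive-definiteness for a fixed $\tau$) and the integral comparison bounding the inner sum by $O(1/|m|)$.
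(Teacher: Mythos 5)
Your proof is correct and follows essentially the same route as the paper: both reduce the problem via a bound of the form $|m\tau+n|^2 \geq c(\tau)\,(m^2+n^2)$ (you derive it from positive-definiteness of the quadratic form, the paper cites a lemma of Conrad), and both then sum over $n$ first to get a bound of order $1/|m|$ (you by integral comparison, the paper via the exact $\coth$ identity from the Weierstrass product of $\sinh$), finishing with $\sum_m 1/m^2 < \infty$. The differences are cosmetic; your version is, if anything, slightly more self-contained.
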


\begin{proof}
Using \cite[Lemma 4.2]{Conrad}, there exists a $\delta = \delta_{\tau} \in (0,1)$ such that
\[
|m \tau + n| \geq \delta |m i + n|.
\]
We find that
\[
\sum_{\substack{m, n \in \Z \\ m, n \textnormal{ odd}}}\left| \frac{e^{-2 \pi i m/4}}{(m\tau+n)^2m} \right| \leq \frac{1}{\delta^2}\sum_{\substack{m,n \in \Z \\ m \neq 0}} \frac{1}{|m i + n|^2 |m| } = \frac{1}{\delta^2}\sum_{\substack{m,n \in \Z \\ m \neq 0}} \frac{1}{(m^2 + n^2) |m|}.
\]
It remains to show that
\[
\sum_{\substack{m,n \in \Z \\ m \neq 0}} \frac{1}{(m^2 + n^2) |m|} = 2 \zeta(3) + 4 \sum_{\substack{m,n \geq 1}} \frac{1}{(m^2+n^2)m}
\]
converges.  Notice that all the terms in the above sum are positive, and therefore, by the Fubini--Tonelli Theorem, the sum can be performed in either order and it suffices to prove that the final result is finite. By the Weierstrass product of the hyperbolic sine function, we have  \[\frac{\sinh{\pi z}}{\pi z} = \prod_{n \geq 1} \left(1+\frac{z^2}{n^2}\right).\] Taking the logarithm and differentiating both sides, we get \[\pi \coth{\pi z} - \frac{1}{z} = \sum_{n \geq 1} \frac{2z}{z^2 + n^2}.\]  Replacing $z = m,$ the above equality yields that \[\pi\frac{ \coth{\pi m}}{m} - \frac{1}{m^2} = \sum_{n \geq 1} \frac{2}{m^2 + n^2}.\]  Therefore, 
    \[\sum_{m, n \geq 1} \frac{1}{(m^2 + n^2)m} = - \sum_{m \geq 1} \frac{1}{2m^3} + \frac{\pi}{2}\sum_{m \geq 1} \frac{\coth{ \pi m}}{m^2} = -\frac{\zeta(3)}{2} + \frac{\pi}{2} \sum_{m \geq 1} \left(\frac{e^{2\pi m} +1}{e^{2\pi m} - 1}\right)\frac{1}{m^2},\] and since $\lim_{m \rightarrow \infty}\frac{e^{2\pi m} +1}{e^{2\pi m} - 1} = 1,$ there exists $N_0 > 1,$ such that $1 \leq \frac{e^{2\pi m} +1}{e^{2\pi m} - 1} < 2$ for all $m \geq N_0,$ and \[\sum_{m, n \geq 1} \frac{1}{(m^2 + n^2)m} = -\frac{\zeta(3)}{2} + \frac{\pi}{2} \sum_{m \geq 1} \left(\frac{e^{2\pi m} +1}{e^{2\pi m} - 1}\right)\frac{1}{m^2} < - \frac{\zeta(3)}{2} + \frac{\pi}{2} \sum_{m = 1}^{N_0 -1} \left(\frac{e^{2\pi m} +1}{e^{2\pi m} - 1}\right)\frac{1}{m^2} + \pi \sum_{m \geq N_0} \frac{1}{m^2}.\] Since each term on the right-hand side above converges, we conclude that $\sum_{m, n \geq 1} \frac{1}{(m^2 + n^2)m}$ converges, and this concludes the argument.

\kommentar{\mcom{To comment the rest of the proof? (Definitely don't delete, let's keep it hidden.)}
For $m, n \geq 2$ we can compare the summand with the integral
\[
\frac{1}{(m^2 + n^2)m} \leq \int_{m-1}^m \int_{n-1}^n \frac{1}{(x^2 + y^2)x} \, \dd x \dd y.
\]
It follows that
\begin{align*}
\sum_{\substack{m,n \geq 1}} \frac{1}{(m^2+n^2)m} &= 
\sum_{m \geq 1} \frac{1}{(m^2+1)m} + \sum_{n \geq 1} \frac{1}{(1 + n^2)}+ \sum_{\substack{m,n \geq 2}} \frac{1}{(m^2+n^2)m}\\ & \leq \zeta(3) +  \zeta(2) + \int_{\R_{\geq 1}^2 } \frac{1}{(x^2 + y^2)x} \, \dd x \dd y
\end{align*}
and
\[
\int_{\R_{\geq 1}^2 } \frac{1}{(x^2 + y^2)x} \, \dd x \dd y \leq \int_{\R_{\geq 0}^2 } \frac{1}{(x^2 + y^2 + 1)(x+1)} \, \dd x \dd y  .
\]
We can write the integral in terms of polar coordinates $x = r \cos(t)$ and $y = r \sin(t)$. This gives
\begin{align*}
\int_{\R_{\geq 0}^2 } \frac{1}{(x^2 + y^2 + 1)(x+1)} \, \dd x \dd y  &= \int_{\R_{\geq 0} \times [0, \pi/2]} \frac{1}{r \cos(t) + 1} \frac{r}{r^2+1} \, \dd r \dd t \\ & \leq \int_{\R_{\geq 0} \times [0, \pi/2]} \frac{1}{r \cos(t) + 1} \frac{1}{r} \, \dd r \dd t  .  
\end{align*}
After another substitution $u = \cos(t)$ and $r = 1/R$, it follows that
\begin{align*}
\int_{\R_{\geq 0} \times [0, \pi/2]} \frac{1}{r \cos(t) + 1} \frac{1}{r} \, \dd r \dd t  &= \int_{[0,1] \times [0,1]} \frac{1}{u + R} \frac{\dd R \dd u}{\sqrt{1 - u^2}} \\ & \leq \int_{[0,1] \times [0,1]} \frac{1}{u + R} \dd R \dd u \\
&\leq \int_{u^2+R^2 \leq 2} \frac{1}{\sqrt{u^2 + R^2}} \dd R \dd u.  
\end{align*}
For the last integral we can do another polar coordinate change $u = \tilde{r} \cos(\alpha)$ and $R = \tilde{r} \sin(\alpha)$ to get
\[
\int_{u^2+R^2 \leq 2} \frac{1}{\sqrt{u^2 + R^2}} \dd R \dd u = \int_{[0,\sqrt{2}] \times [0, 2 \pi]} \frac{1}{\sqrt{\tilde{r}^2}} \tilde{r} \dd \tilde{r} \dd \alpha = \sqrt{8} \pi.
\]
Combining all this we conclude that
\[
\sum_{\substack{m,n \geq 1}} \frac{1}{(m^2+n^2)m} \leq \zeta(2) + \zeta(3) + \sqrt{8} \pi.
\]
Hence
\[\sum_{\substack{m, n \in \Z \\ m, n \textnormal{ odd}}}\frac{e^{-2 \pi i m/4}}{(m\tau+n)^2m}\]
converges absolutely. 
}
\end{proof}

\begin{lem}
\label{lem:modular}
We keep the same notation as in Theorem \ref{thm:modular}. Let $q_{k} = e^{-\pi t_{k}} $. Then 
\[
c_0(k) = \frac{16}{\pi}\sum_{\substack{d, f \geq 1\\ f\textnormal{ odd}}} \frac{d}{f} (-1)^{d+1} \chi_{-4}(f) q_{k}^{df},
\]
where $\chi_{-4}(n) = \left(\frac{-4}{n} \right)$ is the odd quadratic character modulo $4$.

\end{lem}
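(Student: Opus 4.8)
The plan is to begin from the lattice-sum expression furnished by Theorem~\ref{thm:modular},
\[
c_0(k) = \frac{8 i}{\pi^3} \sum_{\substack{m, n \in \Z \\ m, n \textnormal{ odd}}}\frac{e^{-2 \pi i m/4}}{(m\tau_k + n)^2 m},
\]
and to turn it into a $q$-expansion by summing over $n$ first. The first observation is purely elementary: for odd $m$ we have $\cos(\pi m/2)=0$ and $\sin(\pi m/2)=(-1)^{(m-1)/2}=\chi_{-4}(m)$, so $e^{-2\pi i m/4}=e^{-\pi i m/2}=-i\,\chi_{-4}(m)$. Moreover the factor $\chi_{-4}(m)/m$ together with $\sum_n (m\tau_k+n)^{-2}$ is invariant under $(m,n)\mapsto(-m,-n)$ (using $\chi_{-4}(-1)=-1$), so the contribution of $m<0$ equals that of $m>0$. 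Hence, using the absolute convergence from Lemma~\ref{Lem:absconv} to rearrange, I would rewrite the sum as
\[
c_0(k) = \frac{8i}{\pi^3}\cdot(-2i)\sum_{\substack{m\ge 1\\ m\textnormal{ odd}}}\frac{\chi_{-4}(m)}{m}\sum_{\substack{n\in\Z\\ n\textnormal{ odd}}}\frac{1}{(m\tau_k+n)^2}.
\]

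Next I would carry out the inner sum over odd $n$ for a fixed $m>0$, which is the one genuinely analytic step. Starting from the classical $\sum_{n\in\Z}(\tau+n)^{-2}=\pi^2/\sin^2(\pi\tau)$ and subtracting the even-index part $\sum_{\ell\in\Z}(\tau+2\ell)^{-2}=\pi^2/(4\sin^2(\pi\tau/2))$, the double-angle formula $\sin(\pi\tau)=2\sin(\pi\tau/2)\cos(\pi\tau/2)$ gives the secant identity
\[
\sum_{\substack{n\in\Z\\ n\textnormal{ odd}}}\frac{1}{(\tau+n)^2}=\frac{\pi^2}{4\cos^2(\pi\tau/2)}.
\]
Writing $w=e^{\pi i\tau}$, so that $4\cos^2(\pi\tau/2)=w^{-1}(1+w)^2$, this becomes the odd-index analogue of the Lipschitz formula,
\[
\sum_{\substack{n\in\Z\\ n\textnormal{ odd}}}\frac{1}{(\tau+n)^2}=\frac{\pi^2 w}{(1+w)^2}=\pi^2\sum_{d\ge 1}(-1)^{d+1}d\,w^d,\qquad \im\tau>0.
\]

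I would then specialize to $\tau=m\tau_k=i m t_k$, so that $w=e^{\pi i m\tau_k}=e^{-\pi m t_k}=q_k^{m}$, obtaining $\sum_{n\textnormal{ odd}}(m\tau_k+n)^{-2}=\pi^2\sum_{d\ge1}(-1)^{d+1}d\,q_k^{md}$. Substituting this into the displayed formula for $c_0(k)$ and simplifying the constant $\tfrac{8i}{\pi^3}\cdot(-2i)\cdot\pi^2=\tfrac{16}{\pi}$ yields
\[
c_0(k)=\frac{16}{\pi}\sum_{\substack{m\ge1\\ m\textnormal{ odd}}}\sum_{d\ge1}\frac{d}{m}(-1)^{d+1}\chi_{-4}(m)\,q_k^{md},
\]
which is the asserted identity after renaming $m=f$.

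The remaining point is to justify the interchanges of summation. Since $|q_k|=e^{-\pi t_k}<1$, each inner series $\sum_{d\ge1}(-1)^{d+1}d\,q_k^{md}$ converges absolutely, and using $\tfrac{d}{m}\le d$ the full double series is dominated by $\sum_{m,d\ge1} d\,|q_k|^{md}\le \tfrac{1}{1-|q_k|}\sum_{d\ge1}d\,|q_k|^{d}<\infty$. Hence Tonelli's theorem legitimizes performing the $n$-summation first and then recombining into a double series in $d$ and $f$. I expect the only genuinely non-routine ingredient to be the odd-index secant expansion above; everything else is bookkeeping with the character values and the overall constant.
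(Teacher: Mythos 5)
Your computation is internally correct --- the cotangent/secant identity, the evaluation $e^{-2\pi i m/4}=-i\,\chi_{-4}(m)$ for odd $m$, the symmetry in $(m,n)\mapsto(-m,-n)$, and the constant $\tfrac{8i}{\pi^3}\cdot(-2i)\cdot\pi^2=\tfrac{16}{\pi}$ all check out, and they do establish that the lattice sum of Theorem \ref{thm:modular} equals the $q$-series of Lemma \ref{lem:modular}. The problem is logical, not computational: you have taken Theorem \ref{thm:modular} as your starting point, but in the paper the dependency runs the other way. The proof of Theorem \ref{thm:modular} explicitly invokes Lemma \ref{lem:modular} in its final step (it performs essentially your calculation in reverse, converting the $q$-series into the lattice sum). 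So your argument is circular: at the point where the lemma must be proved, the lattice-sum expression for $c_0(k)$ is not yet available, and the phrase ``we keep the same notation as in Theorem \ref{thm:modular}'' imports only the definitions of $\tau_k$, $t_k$ and $x(\tau)$, not the theorem's conclusion.

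What is actually required --- and what your proposal omits entirely --- is the passage from the defining integral $c_0(k)=\int_k^4 F(t)\,\dd t$ to the $q$-expansion. The paper does this by substituting $t=x(\tau)$ with the eta-quotient parametrization $F(x(\tau))=\frac{2\tau}{\pi i}\,\eta(4\tau)^{10}\eta(2\tau)^{-4}\eta(8\tau)^{-4}$ and the eta-quotient expression for $\dd x(\tau)$, applying the modular transformation $\tau=-1/(8\tau')$ via the functional equation $\eta(-1/(ix))=\sqrt{x}\,\eta(ix)$, recognizing the resulting weight-$2$ eta quotient as the Eisenstein series $\sum_{n\ge1}(-1)^n\sum_{d\mid n}d^2(-1)^{n/d}\chi_{-4}(n/d)q^n$, and integrating term by term to pick up the factor $1/(fd)$. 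None of this is recoverable from the manipulations you describe. To salvage your approach you would need an independent proof of the lattice-sum formula for $c_0(k)$; as written, your argument proves only the equivalence of the two displayed expressions, which is the content of the paper's proof of Theorem \ref{thm:modular}, not of Lemma \ref{lem:modular}.
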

\begin{proof}
Using \cite[p. 437]{StraubZudilin}, we find that $F(t)$ has the following modular parametrisation:
\[
F(x(\tau)) = \frac{2 \tau}{ \pi i} \frac{\eta(4 \tau)^{10}}{ \eta(2 \tau)^4 \eta(8 \tau)^4}.
\]
Hence $F(x(\tau))/\tau$ is a modular form of weight $1$ for the group $\Gamma_1(8)$.
We start by making a change of variables $t = x(\tau)$ in the integral defining $c_0(k)$. Since $x(i/(4 t_k)) = k$, we find
\[
c_0(k) = \int_k^4 F(t) \, \dd t = \int_{i/(4t_k)}^{0} F(x(\tau)) \dd x(\tau) =  \int_{i/(4t_k)}^{0} \frac{2 \tau}{ \pi i} \frac{\eta(4 \tau)^{10}}{ \eta(2 \tau)^4 \eta(8 \tau)^4}  \dd x(\tau).
\]
Using \cite[Table 2]{AyginToh}, it follows that the derivative $\frac{\dd x(\tau)}{\dd \tau}$ is again an eta-quotient.
\[
\dd x(\tau) = 2 \pi i \cdot 16 \left( \frac{\eta(2 \tau)^3 \eta(8 \tau)^2}{\eta(4 \tau)^4} \right)^4 \dd \tau = 2 \pi i \left(16q - 192q^3 + 1120q^5 - 4480q^7 + 14544q^9 + \cdots \right)\dd \tau.
\]
Thus, we arrive at
\begin{align*}
    \int_k^4 F(t) \, \dd t &= 64 \int_{i/(4t_k)}^{0} \tau \cdot \left( \frac{\eta(2 \tau)^4 \eta(8 \tau)^2}{\eta(4 \tau)^3} \right)^2 \dd \tau.
\end{align*}
Next, we apply the transformation $\tau = -1/(8 \tau')$. From the functional equation of the eta function \cite{Siegel}: $\eta(-1/
(ix)) = \sqrt{x} \eta(i x)$ for $x > 0$, we find that
\[
\left( \frac{\eta(2 \tau)^4 \eta(8 \tau)^2}{\eta(4 \tau)^3} \right)^2 = 32 i (\tau')^3 \left(\frac{\eta(4 \tau')^4 \eta(\tau')^2}{\eta(2\tau')^3} \right)^{2}.
\]
This implies that
\[
64 \int_{i/(4t_k)}^{0} \tau \cdot \left( \frac{\eta(2 \tau)^4 \eta(8 \tau)^2}{\eta(4 \tau)^3} \right)^2 \dd \tau = - 32 i \int_{i t_k/2}^{+ i \infty} \left(\frac{\eta(4 \tau')^4 \eta(\tau')^2}{\eta(2\tau')^3} \right)^{2} \, \dd \tau'.
\]
The integrand can be written as an Eisenstein series (see \cite[A138501]{oeis}):
\[
\left(\frac{\eta(4 \tau)^4 \eta(\tau)^2}{\eta(2\tau)^3} \right)^{2} = \sum_{n \geq 1} (-1)^n \sum_{d | n} d^2 (-1)^{n/d} \chi_{-4}(n/d) q^n.
\]
We can rewrite the $q$-series in the following way
\begin{align*}
    \sum_{n \geq 1} (-1)^n \sum_{d | n} d^2 (-1)^{n/d} \chi_{-4}(n/d) q^n &= \sum_{d, f \geq 1} d^2 (-1)^{fd + f} \chi_{-4}(f) q^{fd}\\
    &= \sum_{\substack{d, f \geq 1\\ f\textnormal{ odd}}} d^2 (-1)^{d+1} \chi_{-4}(f) q^{fd}.
\end{align*}
Hence, 
\begin{align*}
    \int_{k}^4 F(t) \, \dd t &= -32 i \int_{i t_k/2}^{+ i \infty} \left(\frac{\eta(4 \tau)^4 \eta(\tau)^2}{\eta(2\tau)^3} \right)^{2} \, \dd \tau\\
    &= \frac{32}{2 \pi} \int_0^{q_{k'}} \left(\frac{\eta(4 \tau)^4 \eta(\tau)^2}{\eta(2\tau)^3} \right)^{2} \, \frac{\dd q}{q}\\
        &= \frac{16}{\pi}\sum_{\substack{d, f \geq 1\\ f\textnormal{ odd}}} d^2 (-1)^{d+1} \chi_{-4}(f) \frac{q_{k}^{fd}}{fd}\\
        &= \frac{16}{\pi}\sum_{\substack{d, f \geq 1\\ f\textnormal{ odd}}} \frac{d}{f} (-1)^{d+1} \chi_{-4}(f) q_{k}^{fd},
\end{align*}
where we have used that $\dd q = 2 \pi i q \dd \tau$. This is precisely what we needed to show.
\end{proof}

\begin{proof}[Proof of Theorem \ref{thm:modular}]
By Lemma \ref{Lem:absconv} the sum converges absolutely, so we can choose the order of summation. We first sum over $m$. We use the following identity, see \cite[p. 19]{Weil}: for $\tau$ in the complex upper half-plane and $q = e^{2 \pi i \tau}$, we have
\begin{equation}
\label{eq:cotangentsum}
\frac{1}{(2 \pi i)^2}  \sum_{n \in \Z} \frac{1}{(\tau + n)^2} = \sum_{r \geq 1} r q^{r}.    
\end{equation}

It follows that for fixed $m \neq 0$
\begin{align*}
\sum_{\substack{n \in \Z \\ n \textnormal{ odd}}}\frac{1}{(2 m \tau + n)^2} &= \left( \sum_{\substack{n \in \Z_{\neq 0}}}- \sum_{\substack{n \in \Z_{\neq 0} \\ n \textnormal{ even}}} \right)  \frac{1}{(2 m \tau + n)^2}\\
&= \sum_{\substack{n \in \Z_{\neq 0}}} \frac{1}{(2 m \tau + n)^2} - \frac{1}{4} \sum_{\substack{n \in \Z_{\neq 0}}} \frac{1}{(m \tau + n)^2}\\
&= (2 \pi i)^2 \sum_{r \geq 1} \left( r q^{2 mr} - \frac{1}{4} r q^{mr} \right),
\end{align*}
where the last line follows from \eqref{eq:cotangentsum}. Next, we sum over $m$, splitting the sum into two cases: $m \geq 1$ and $m \leq -1$. Since 
\[
\sum_{\substack{n \in \Z \\ n \textnormal{ odd}}}\frac{1}{(2 m \tau + n)^2}
\]
depends only on the absolute value of $m$, we can write the sum over $m\leq -1$ as
\[
\sum_{\substack{m \leq -1 \\ m \textnormal{ odd}}} \frac{e^{-2 \pi i m/4 }}{m} \sum_{\substack{n \in \Z \\ n \textnormal{ odd}}}\frac{1}{(2 m \tau + n)^2} = -\sum_{\substack{m \geq 1 \\ m \textnormal{ odd}}} \frac{e^{2 \pi i m/4 }}{m} \sum_{\substack{n \in \Z \\ n \textnormal{ odd}}}\frac{1}{(2 m \tau + n)^2}.
\]
We conclude that
\begin{align*}
\frac{8 i}{\pi^3} \sum_{\substack{m, n \in \Z \\ m, n \textnormal{ odd}}}\frac{e^{-2 \pi i m/4}}{(2 m\tau + n)^2 m} &= \frac{-32 i}{\pi} \sum_{\substack{m \geq 1, r \geq 1 \\ m \textnormal{ odd}}} \frac{e^{-2 \pi i m /4}-e^{2 \pi i m /4}}{m} \left(r q^{2 m r} - \frac{1}{4} r q^{mr} \right) \\
&= \frac{-64}{\pi} \sum_{\substack{m \geq 1, r \geq 1 \\ m \textnormal{ odd}}} \frac{\chi_{-4}(m)}{m} \left(r q^{2 m r} - \frac{1}{4} r q^{mr} \right) \\
&= \frac{16}{\pi} \sum_{\substack{m \geq 1, r \geq 1 \\ m \textnormal{ odd}}} \frac{r}{m} (-1)^{r+1} \chi_{-4}(m) q^{mr}.
\end{align*}
If we now apply this identity to $\tau = \tau_k/2$, we find, using Lemma \ref{lem:modular}
\[
c_0(k) = \frac{8 i}{\pi^3} \sum_{\substack{m, n \in \Z \\ m, n \textnormal{ odd}}}\frac{e^{-2 \pi i m/4}}{(m\tau_k + n)^2 m}.
\]
This concludes the proof of our statement.

\end{proof}


\bibliographystyle{alpha}
\bibliography{Bibliography}
\end{document}